\theoremstyle{plain}
\newtheorem{theorem}{Theorem}[section]
\newtheorem*{thm*}{Theorem}
\newtheorem{prop}[theorem]{Proposition}
\newtheorem{lemma}[theorem]{Lemma}
\newtheorem{cor}[theorem]{Corollary}
\theoremstyle{definition}
\newtheorem{dfn}[theorem]{Definition}
\theoremstyle{remark} 
\theoremstyle{plain}
\numberwithin{equation}{section}
\newcommand{\alpheqn}[1][\relax]{
     \refstepcounter{equation}
     \if#1\relax \relax
       \else \label{#1}
     \fi  
     \setcounter{saveeqn}{\value{equation}}%
    \setcounter{equation}{0}%
    \renewcommand{\theequation}{\thealphequation}}
\newcommand{\reseteqn}{\setcounter{equation}{\value{saveeqn}}%
     \renewcommand{\theequation}{\thearabicequation}}
\providecommand{\mathscr}{\mathcal} 
\newcommand{\Lip}{\operatorname{Lip}}
\newcommand{\mk}{\operatorname{mk}}
\newcommand{\Sp}{\operatorname{Sp}}
\newcommand{\vertiii}[1]{{\left\vert\kern-0.25ex\left\vert\kern-0.25ex\left\vert #1 
    \right\vert\kern-0.25ex\right\vert\kern-0.25ex\right\vert}}
\newcommand{\Bvert}[1]{{\Big\vert\kern-0.25ex\Big\vert\kern-0.25ex\Big\vert #1 
    \Big\vert\kern-0.25ex\Big\vert\kern-0.25ex\Big\vert}}
\newcommand{\bvert}[1]{{\big\vert\kern-0.25ex\big\vert\kern-0.25ex\big\vert #1 
    \big\vert\kern-0.25ex\big\vert\kern-0.25ex\big\vert}}
\newcommand{\nvert}[1]{{\vert\kern-0.25ex\vert\kern-0.25ex\vert #1 
    \vert\kern-0.25ex\vert\kern-0.25ex\vert}}
\newcommand{\dirac}{\slashed D}
\renewcommand{\leq}{\leqslant}
\renewcommand{\geq}{\geqslant}
\newcommand{\cd}{\cdot}
\newcommand{\clc}{\cdot\ldots\cdot}
\newcommand{\ot}{\otimes}
\newcommand{\hot}{\widehat \otimes}
\newcommand{\op}{\oplus}
\newcommand{\ci}{\circ}
\newcommand{\ti}{\times}
\newcommand{\nn}{\mathbb{N}}
\newcommand{\zz}{\mathbb{Z}}
\newcommand{\cc}{\mathbb{C}}
\newcommand{\al}{\alpha}
\newcommand{\be}{\beta}
\newcommand{\ga}{\gamma}
\newcommand{\Ga}{\Gamma}
\newcommand{\de}{\delta}
\newcommand{\De}{\Delta}
\newcommand{\ep}{\varepsilon}
\newcommand{\io}{\iota}
\newcommand{\la}{\lambda}
\newcommand{\La}{\Lambda}
\newcommand{\Na}{\nabla}
\newcommand{\om}{\omega}
\newcommand{\Om}{\Omega}
\newcommand{\si}{\sigma}
\newcommand{\te}{\theta}
\newcommand{\ze}{\zeta}
\newcommand{\pa}{\partial}
\newcommand{\da}{\dagger}
\newcommand{\ov}{\overline}
\newcommand{\C}[1]{\mathcal{#1}}
\newcommand{\G}[1]{\mathfrak{#1}}
\newcommand{\T}[1]{\textup{#1}}
\newcommand{\B}[1]{\mathbb{#1}}
\newcommand{\fork}[2]{\left\{ \begin{array}{#1} #2 \end{array} \right.}
\newcommand{\ma}[2]{\left(\begin{array}{#1} #2 \end{array} \right)}
\newcommand{\su}{\subseteq}
\newcommand{\wit}{\widetilde}
\newcommand{\inn}[1]{\langle #1 \rangle}
\newcommand{\sem}{\setminus}
\begin{document}

\title[Spectral metrics on quantum projective spaces]{Spectral metrics on quantum projective spaces}


\author{Max Holst Mikkelsen}
\address{Department of Mathematics and Computer Science,
The University of Southern Denmark,
Campusvej 55, DK-5230 Odense M,
Denmark}
\email{maxmi@imada.sdu.dk}

\author{Jens Kaad}
\address{Department of Mathematics and Computer Science,
The University of Southern Denmark,
Campusvej 55, DK-5230 Odense M,
Denmark}
\email{kaad@imada.sdu.dk}

\subjclass[2020]{58B34; 58B32, 46L30} 

\keywords{Quantum metric spaces, Quantum projective spaces, Spectral triples, Quantum $SU(N)$.} 



\begin{abstract}
  We show that the noncommutative differential geometry of quantum projective spaces is compatible with Rieffel's theory of compact quantum metric spaces. This amounts to a detailed investigation of the Connes metric coming from the unital spectral triple introduced by D'Andrea and D\k{a}browski. In particular, we establish that the Connes metric metrizes the weak-$*$ topology on the state space of quantum projective space. This generalizes previous work by the second author and Aguilar regarding spectral metrics on the standard Podle\'s spheres. 
\end{abstract}

\maketitle
\tableofcontents

\section{Introduction}
In the last two decades it has become more and more apparent that one of the key open problems in noncommutative geometry is to reconcile this theory with the theory of compact quantum groups, \cite{CoMo:TIII,Con:NCG,Wor:CQG}. An important aspect of this problem concerns the construction of spectral triples on $q$-deformations of classical Lie groups in a way which is compatible with the $q$-geometry as witnessed by the quantized enveloping algebra, \cite{Dri:QG,Jim:QYB}. We note that there are currently interesting examples of spectral triples available for $q$-deformations as described in \cite{DLSSV:DOS,CaPa:EST} for the case of quantum $SU(2)$ and in the general setting in \cite{NeTu:DOC}. These examples are however less sensible to the underlying $q$-geometry since the spectra of the corresponding abstract Dirac operators are similar to the spectra of their classical counter parts. Studies of the important example quantum $SU(2)$ do in fact suggest that the current spectral triple framework for noncommutative geometry (with or without twists) is too inflexible to capture all of the rich geometric features of this compact quantum group, \cite{KRS:RFH,KaSe:TST,KaKy:SU2,Bra:GFC}.

Instead, it appears that some of the quantum flag manifolds are more amenable to $q$-geometric investigations using spectral triples as the main theoretical framework, see \cite{Kra:DOQ,KrTu:DDQ} where the relationship to the covariant first order differential calculus found by Heckenberger and Kolb is also highlighted, \cite{HeKo:LDI,HeKo:RQF}. The hope is then that these quantum flag manifolds can be used as building blocks for understanding larger and larger parts of the $q$-deformed classical Lie groups through the lens of noncommutative geometry, see e.g. \cite{KRS:RFH,KaKy:SU2} for applications of this idea in the case of quantum $SU(2)$. The correct way of assembling these building blocks is in principle dictated by unbounded $KK$-theory as pioneered by Mesland in \cite{Mes:UCN} and consolidated in \cite{KaLe:SFU,MeRe:NMU}. It should however be noted that the current version of unbounded $KK$-theory features a too restrictive focus on unbounded Kasparov modules (bivariant spectral triples), meaning that the incorporation of twisted commutators is currently not allowed in unbounded $KK$-theory, even though some progress in this direction has been made in \cite{Kaa:UKM}. 

We are in this text studying the noncommutative geometry of the higher dimensional quantum projective spaces which are quantum flag manifolds associated with quantum $SU(N)$. The point of departure is the unital spectral triples introduced by D'Andrea and D\k{a}browski in \cite{DaDa:DQP} building on the earlier work \cite{DaDaLa:NGQPP} including Landi as a coauthor.

One of the many fundamental insights of Connes was that any unital spectral triple gives rise to an (extended) metric on the state space of the associated unital $C^*$-algebra, \cite{Con:CFH}. It was then emphasized by Rieffel that it is important to ask whether the Connes metric metrizes the weak-$*$ topology or not. This question leads to the whole theory of compact quantum metric spaces as pioneered by Rieffel in \cite{Rie:MSA,Rie:MSS} and prominently featuring a quantum analogue of the Gromov-Hausdorff distance between compact metric spaces, \cite{Rie:GHD}. The quantum Gromov-Hausdorff distance has later on been fine tuned to incorporate more and more structure in the deep and extensive work of Latr\'emoli\`ere, \cite{Lat:QGH,Lat:MGH,Lat:GHM}.

The main result of this paper can be formulated as follows:
\medskip

\emph{The Connes metric metrizes the weak-$*$ topology on the state space of quantum projective space. In other words, the unital spectral triple introduced by D'Andrea and D\k{a}browski turns quantum projective space into a spectral metric space.}
\medskip

As a special case of this theorem we recover the main result from \cite{AgKa:PSM} which states that the standard Podle\'s sphere becomes a spectral metric space when equipped with the D\k{a}browski-Sitarz spectral triple introduced in \cite{DaSi:DSP}.

Before giving some details on our method of proof we pay attention to a certain detail regarding the coordinate algebra appearing in a spectral triple. This choice of coordinate algebra influences the definition of the Connes metric on the state space and the larger this algebra is, the harder it becomes to recover the weak-$*$ topology. We are in this text working with the \emph{Lipschitz algebra} which is the maximal choice of coordinate algebra (with respect to inclusion) and our result regarding spectral metrics on quantum projective space can therefore not be strengthened. A part from maximality, another feature of the Lipschitz algebra is that it can be reconstructed from the $C^*$-algebra and the abstract Dirac operator of the spectral triple (making it canonical in this sense as well). 
\medskip

In the proof of our main theorem we rely on the structure of quantum $SU(N)$ as a $C^*$-algebraic compact quantum group. More precisely, we apply the coaction of quantum $SU(N)$ on quantum projective space together with a sequence of states converging in the weak-$*$ topology to the counit (restricted to quantum projective space). Each of the states appearing in this sequence provides us with a finite dimensional approximation of the inclusion of quantum projective space into quantum $SU(N)$. It turns out that the precision of this approximation can be estimated by the distance in the Connes metric between the state in question and the restricted counit. 

Applying the description of compact quantum metric spaces in terms of finite dimensional approximations appearing in \cite{Kaa:ExPr}, we see that our main result follows, if we can establish that the sequence of states converges to the restricted counit with respect to the Connes metric (and not just in weak-$*$ topology). Establishing this convergence result is still a difficult task, but it can be made substantially less complicated by considering the invariance properties of the Connes metric. We show that the Connes distance between our states is invariant under a conditional expectation from quantum projective space onto a much smaller and commutative unital $C^*$-subalgebra, which is $*$-isomorphic to the continuous functions on the subset $I_q = \{ q^{2m} \mid m \in \nn_0\} \cup \{0\}$ of the real line.

We refer to $I_q$ as the \textit{quantized interval} and the above observations ensure that we only need to understand the spectral metric properties of $I_q$ (equipped with the unital spectral triple obtained by restriction of the unital spectral triple on quantum projective space). This is a manageable task and it turns out that the relevant metric information can be extracted from difference quotients for continuous functions on $I_q$ and the single positive continuous function $q^{-1} x(1 - q^2 x)$. Notably, this latter function (without the factor $q^{-1}$) also plays a major role for the continuity properties of the Podle\'s spheres in quantum Gromov-Hausdorff distance investigated in \cite{GKK:QI,AgKaKy:PSC}.

\subsubsection*{Acknowledgements}
The authors gratefully acknowledge the financial support from the Independent Research Fund Denmark through grant no.~9040-00107B, 7014-00145B and 1026-00371B. This research is part of the EU Staff Exchange project 101086394 "Operator Algebras That One Can See". 

We would like to thank Marc Rieffel and Walter van Suijlekom for making us aware of the articles \cite{Rie:MBM} and \cite{DLSSV:DOS}. It is also a pleasure to thank our colleague David Kyed for valuable input to our proof of the extension theorem.

\section{Preliminaries}
All Hilbert spaces in this text are assumed to be separable and for such a Hilbert space $H$ we let $\B B(H)$ denote the unital $C^*$-algebra of bounded operators on $H$. The inner product on the Hilbert space $H$ (which is anti-linear in the first variable) is denoted by $\inn{\cd,\cd}$ and the notation $\| \cd \|_2$ refers to the corresponding norm on $H$. The unique $C^*$-norm on a $C^*$-algebra is written as $\| \cd \|$.


Throughout this article, the deformation parameter $q$ is fixed and belongs to the open unit interval $(0,1)$. We shall moreover fix a natural number $\ell \in \nn$ and put $N := \ell + 1$.

For integers $i,j \in \zz$ we apply the notation $\de_{ij}$ for the corresponding Kronecker delta and for a natural number $n \in \nn$ we put $\inn{n} := \{1,\ldots,n\}$ and $\inn{0} := \emptyset$. The unital $C^*$-algebra of matrices of size $n$ is denoted by $\B M_n(\cc)$ and the corresponding matrix units are written as $e_{ij}$ for $i,j$ in the index set $\inn{n}$.   

\subsection{Compact quantum metric spaces}\label{introCQMS}

In this subsection, we present Rieffel's notion of a compact quantum metric space, see \cite{Rie:MSA, Rie:MSS, Rie:GHD}. We notice that the current trend is to describe the theory in the generality of operator systems in line with recent developments in noncommutative geometry, \cite{CoSu:STN,CoSu:TRO,Rie:CFT}. Keeping in mind the scope of the present paper, we are going to focus on the $C^*$-algebraic framework for compact quantum metric spaces. 

Let $\C A$ be a norm-dense unital $*$-subalgebra of a unital $C^*$-algebra $A$ and consider a seminorm $L : \C A \to [0,\infty)$. The following terminology can be found in \cite{Rie:MBM}:

\begin{dfn}
We say that $L : \C A \to [0,\infty)$ is a \textit{slip-norm} if $L(x)=L(x^*)$ for all $x\in \C A$ and $L(1) = 0$.
\end{dfn}

From now on, we shall assume that $L : \C A \to [0,\infty)$ is a slip-norm. We define an extended metric -- referred to as the \textit{Monge-Kantorovich} metric -- on the state space of $A$ by putting
\[
\mk_L(\mu,\nu):= \sup\big\{ |\mu(x)-\nu(x)| \mid x\in \C A \, ,  \, \, L(x) \leq 1  \big\}  ,
\]
for all $\mu,\nu \in S(A)$. The adjective ``extended'' means that $\mk_L$ can take the value infinity but otherwise satisfies the usual requirements of a metric.

\begin{dfn}
 We say that the pair $(\C A,L)$ is a \textit{compact quantum metric space} if $\mk_L$ metrizes the weak-$*$ topology on the state space $S(A)$. 
\end{dfn}

If $(\C A,L)$ is a compact quantum metric space, then it follows from connectedness of $S(A)$ in the weak-$*$ topology that the Monge-Kantorovich metric is actually finite so that $(S(A),\mk_L)$ is a metric space in the usual sense. We now quote a result which can be found in \cite[Proposition 1.6]{Rie:MSA} and \cite[Proposition 2.2]{Rie:MSS}. The notation $[ \cd ] : A \to A / \B C 1$ refers to the quotient map and the quotient space $A/ \B C 1$ is equipped with the quotient norm $\| \cd \|_{A/ \B C 1}$ (coming from the $C^*$-norm on $A$).

\begin{prop}\label{p:diam}
The following are equivalent: 
\begin{enumerate} 
\item There exists a constant $C > 0$ such that  $\big\| [x] \big\|_{A/\B C 1} \leq C \cdot L(x)$ for all $x\in \C A$; 
\item $\, \sup\big\{ \mk_L(\mu,\nu) \mid \mu,\nu \in S(A)\big\}< \infty$.  
\end{enumerate}
\end{prop}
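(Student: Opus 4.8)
The plan is to deduce both implications from the spectral description of the quotient norm on self-adjoint elements. The central observation is that for a self-adjoint $x \in A$ one has
\[
\big\| [x] \big\|_{A/\B C 1} = \tfrac{1}{2}\big( \max \Sp(x) - \min \Sp(x) \big),
\]
because the infimum defining the quotient norm is attained at the real scalar $\lambda = \tfrac12\big(\max\Sp(x)+\min\Sp(x)\big)$; one checks via the identity $\|x - \lambda 1\|^2 = \|(x-a)^2\| + b^2$ (for $\lambda = a + ib$) that a nonreal shift only increases the norm, so the optimal scalar is real and the quotient norm is half the spectral width. Combining this with the standard fact that a self-adjoint element attains the extreme points of its spectrum against the state space, namely $\max\Sp(x) = \sup_{\mu \in S(A)} \mu(x)$ and $\min\Sp(x) = \inf_{\nu \in S(A)} \nu(x)$, yields the key identity
\[
2 \big\| [x] \big\|_{A/\B C 1} = \sup\big\{ \mu(x) - \nu(x) \mid \mu,\nu \in S(A) \big\} \qquad (x = x^*).
\]

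For the implication $(1) \Rightarrow (2)$ I would argue directly. Given $x \in \C A$ with $L(x) \leq 1$ and any $\varps > 0$, choose $\lambda \in \cc$ with $\| x - \lambda 1 \| \leq \big\| [x] \big\|_{A/\B C 1} + \varps$. Since $\mu(1) = \nu(1) = 1$ for states, we have $\mu(x) - \nu(x) = \mu(x - \lambda 1) - \nu(x - \lambda 1)$, whence $|\mu(x) - \nu(x)| \leq 2 \| x - \lambda 1 \|$. Letting $\varps \to 0$ and invoking the hypothesis gives $|\mu(x) - \nu(x)| \leq 2 \big\| [x] \big\|_{A/\B C 1} \leq 2C$, so the diameter in $(2)$ is bounded by $2C$.

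For the converse $(2) \Rightarrow (1)$, let $D$ denote the finite diameter from $(2)$. I would first treat a self-adjoint element $x = x^*$. The key identity shows that whenever $L(x) \leq 1$ we have $2\big\|[x]\big\|_{A/\B C 1} = \sup_{\mu,\nu}\big(\mu(x)-\nu(x)\big) \leq D$; rescaling $x$ by $1/L(x)$ then gives $2\big\|[x]\big\|_{A/\B C 1} \leq D \cdot L(x)$ whenever $L(x) > 0$, while the case $L(x) = 0$ follows by applying the bound to $tx$ and letting $t \to \infty$, which forces $\big\|[x]\big\|_{A/\B C 1} = 0$. Thus $\big\|[x]\big\|_{A/\B C 1} \leq \tfrac{D}{2}\, L(x)$ for every self-adjoint $x$. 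For general $x \in \C A$ I would pass to the self-adjoint parts $\re(x) = \tfrac12(x + x^*)$ and $\tfrac{1}{2i}(x - x^*)$; here the slip-norm condition $L(x^*) = L(x)$ is exactly what ensures $L(\re(x)) \leq L(x)$ and $L\big(\tfrac{1}{2i}(x-x^*)\big) \leq L(x)$ via subadditivity and homogeneity. The triangle inequality for the quotient norm then gives $\big\|[x]\big\|_{A/\B C 1} \leq D \cdot L(x)$, establishing $(1)$ with $C = D$.

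The norm estimates and the real/imaginary splitting are routine; the one place requiring genuine care is the spectral identity for self-adjoint elements, in particular verifying that the optimal scalar in the quotient norm may be taken real and that states recover the endpoints of the spectrum. This is the hinge on which both implications turn, and it is also where the hypothesis $L(x) = L(x^*)$ enters decisively, since it is precisely what legitimizes the reduction of the general statement to the self-adjoint case.
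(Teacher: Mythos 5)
Your proof is correct. One thing to be aware of: the paper does not prove Proposition \ref{p:diam} at all --- it is quoted from Rieffel (\cite[Proposition 1.6]{Rie:MSA}, \cite[Proposition 2.2]{Rie:MSS}) --- so the comparison is necessarily with Rieffel's argument rather than with anything in the text. The skeleton is the same in both: one direction is the elementary estimate $|\mu(x)-\nu(x)| = |\mu(x-\la 1)-\nu(x-\la 1)| \leq 2\|x-\la 1\|$, and the other rests on the identity $2\big\| [x]\big\|_{A/\B C 1} = \sup\{\mu(x)-\nu(x) \mid \mu,\nu\in S(A)\}$ for self-adjoint $x$, followed by the real/imaginary splitting in which the slip-norm axiom $L(x)=L(x^*)$ is decisive (you identify this role correctly). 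Where you genuinely differ is in how that identity is established. Rieffel, working in the order-unit-space framework, obtains it by duality: a self-adjoint functional annihilating the unit and of norm at most $1$ is of the form $\frac{t}{2}(\mu-\nu)$ with $t\in[0,1]$ and $\mu,\nu$ states (Jordan decomposition together with $\varphi^+(1)=\varphi^-(1)$), so the quotient norm is computed by pairing against differences of states. You instead argue spectrally: the optimal scalar is real because $\|x-(a+ib)1\|^2=\|x-a1\|^2+b^2$, the distance to the scalars is then half the spectral width, and states recover $\max\Sp(x)$ and $\min\Sp(x)$ because the numerical range of a self-adjoint element is the convex hull of its spectrum. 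Both computations are valid; yours is self-contained and purely $C^*$-algebraic, exploiting the functional calculus, while Rieffel's duality argument has the advantage of surviving in the order-unit/operator-system generality (no multiplication available), which is the setting his papers actually treat.
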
 

We say that the pair $(\C A,L)$ has \textit{finite diameter}, if the equivalent conditions from Proposition \ref{p:diam} are satisfied. Notice that if $(\C A,L)$ has finite diameter, then the kernel of $L$ agrees with $\B C 1$. Recently, it was shown in \cite{KyNe:FMS} that the pair $(\C A,L)$ has finite diameter if and only if the Monge-Kantorovich metric is finite (meaning that the extended metric $\mk_L$ is a metric). As a consequence, we see that every compact quantum metric space has finite diameter. This latter fact can also be deduced from compactness of the state space in the weak-$*$ topology. 

We shall now introduce a convenient characterization of compact quantum metric spaces which was obtained in \cite[Theorem 3.1]{Kaa:ExPr}. The proof relies to some extent on Rieffel's characterization of compact quantum metric spaces, see \cite[Theorem 1.8]{Rie:MSA}. We first need a definition. 

\begin{dfn}
Fix a constant $\ep >0$, a unital $C^*$-algebra $B$ and two unital linear maps $\iota,\Phi : A \to B$. Suppose that $\iota$ is isometric and that $\Phi$ is positive. We say that the pair $(\iota,\Phi)$ is an \textit{$\ep$-approximation} of $(\C A,L)$ if
\begin{enumerate}
\item The image of $\Phi$ is finite dimensional (as a vector space over $\cc$); 
\item $\| \iota(x) -\Phi(x) \| \leq \ep \cdot L(x)$ for all $x\in \C A$. 
\end{enumerate}
\end{dfn}

\begin{theorem}\label{t:approx}
The following are equivalent: 
\begin{enumerate}
\item $(\C A,L)$ is a compact quantum metric space. 
\item $(\C A,L)$ has finite diameter and for every $\ep >0$ there exists an $\ep$-approximation of $(\C A,L)$. 
\end{enumerate}
\end{theorem}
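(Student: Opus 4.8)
The plan is to prove the equivalence of the two conditions in Theorem \ref{t:approx} by establishing each implication separately, leaning on Rieffel's characterization of compact quantum metric spaces from \cite[Theorem 1.8]{Rie:MSA} as a black box. Recall that Rieffel's criterion states that $(\C A, L)$ is a compact quantum metric space if and only if $(\C A, L)$ has finite diameter \emph{and} the set $\C B_L := \{ x \in \C A \mid L(x) \leq 1, \, \| [x] \|_{A/\B C 1} \leq C \}$ (equivalently, a suitable ball modulo scalars) is totally bounded in the quotient norm on $A / \B C 1$. The whole proof therefore reduces to showing that, granted finite diameter, the existence of $\ep$-approximations for every $\ep > 0$ is equivalent to total boundedness of this normalized Lipschitz ball.

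First I would prove the direction $(2) \Rightarrow (1)$. Assuming finite diameter and the existence of an $\ep$-approximation $(\iota, \Phi)$ for each $\ep > 0$, I would show that the image $\Phi(\C B_L)$ is totally bounded and that it well-approximates $\iota(\C B_L)$. Concretely, for $x \in \C A$ with $L(x) \leq 1$, condition (2) of the approximation gives $\| \iota(x) - \Phi(x) \| \leq \ep$, so the isometric image $\iota(\C B_L)$ lies within Hausdorff distance $\ep$ of $\Phi(\C B_L)$. Because $\Phi$ has finite-dimensional image and $\C B_L$ is norm-bounded modulo scalars (using finite diameter), the set $\Phi(\C B_L)$ sits inside a bounded subset of a finite-dimensional space, hence is totally bounded; passing to the quotient by $\B C 1$ preserves this. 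A standard $\ep/2$ argument letting $\ep \to 0$ then yields total boundedness of $\iota(\C B_L)$ in $A / \B C 1$, and since $\iota$ is isometric this transfers back to $\C B_L \subseteq A / \B C 1$. Rieffel's criterion then delivers that $(\C A, L)$ is a compact quantum metric space.

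For the converse $(1) \Rightarrow (2)$, finite diameter is immediate (it follows from compactness of the state space, as already noted after Proposition \ref{p:diam}). The substantive task is to manufacture, for a given $\ep > 0$, an explicit $\ep$-approximation. Rieffel's theorem guarantees that $\C B_L$ is totally bounded in $A/\B C 1$, so I would choose a finite $\ep$-net and use it to build a finite-dimensional positive unital map $\Phi$ approximating the inclusion $\iota := \id_A$. The natural construction is to pass through the state space: total boundedness in the quotient norm should be repackaged, via duality and the finite diameter bound, as the statement that finitely many states suffice to detect elements of $\C B_L$ up to error $\ep$. One then assembles $\Phi$ as a unital positive map into a finite-dimensional $C^*$-algebra (for instance a direct sum of matrix blocks or a function algebra on a finite set of states) whose evaluation reproduces these finitely many functionals.

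The hard part will be the converse direction, specifically producing a map $\Phi$ that is simultaneously unital, positive, finite-dimensional in range, and satisfies the estimate $\| x - \Phi(x) \| \leq \ep \cdot L(x)$ uniformly. Total boundedness only supplies finitely many approximating functionals, and turning these into a single positive unital map with the required operator-norm control — rather than merely controlling finitely many matrix entries or pointwise values — is the delicate step; one must be careful that the homogeneity in $L(x)$ is respected, which forces the estimate to be checked on the normalized ball $\C B_L$ and then extended by scaling. I expect this to be where the interplay between the quotient norm total boundedness and the positivity constraint on $\Phi$ requires genuine care, and it is presumably the content that \cite[Theorem 3.1]{Kaa:ExPr} is designed to isolate.
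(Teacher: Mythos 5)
Your proposal should first be measured against what the paper actually does: the paper does \emph{not} prove Theorem \ref{t:approx} at all, but imports it from \cite[Theorem 3.1]{Kaa:ExPr}, noting only that the proof there ``relies to some extent'' on Rieffel's characterization \cite[Theorem 1.8]{Rie:MSA}. So your overall framing (reduce everything to Rieffel's total boundedness criterion) is at least consistent with the source. Your direction $(2)\Rightarrow(1)$ is essentially correct and completable: since $\iota$ and $\Phi$ are unital, they induce maps $A/\B C 1 \to B/\B C 1$, the first isometric and the second bounded with finite dimensional image; finite diameter bounds the image of the $L$-ball in the quotient, and your Hausdorff-distance argument gives total boundedness, which transfers back through the induced isometry. (One repair: $\Phi(\C B_L)$ itself is unbounded -- it contains all scalars -- so you must pass to $B/\B C 1$ \emph{before} invoking boundedness; as written the order of operations is muddled, though the idea is right.)

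The genuine gap is in $(1)\Rightarrow(2)$, and it is not just an omitted computation -- the construction you sketch cannot work in the form you describe. The definition forces $\iota$ and $\Phi$ to land in a \emph{common} unital $C^*$-algebra $B$, with $\iota$ a unital linear \emph{isometry}. You take $\iota := \id_A$ but then propose that $\Phi$ map ``into a finite-dimensional $C^*$-algebra (for instance a direct sum of matrix blocks or a function algebra on a finite set of states)''; no such $B$ can also receive $\iota$. If $A$ is infinite dimensional there is no linear isometry of $A$ into any finite dimensional space, and if $B$ is commutative there is no unital linear isometry of a noncommutative $A$ into $B$ at all: each coordinate functional $\ev_k \circ \iota$ would be a unital contraction on $A$, hence a state, so $\|\iota(x)\|$ could be at most the numerical radius of $x$; but every noncommutative $C^*$-algebra contains a nonzero element $y$ with $y^2 = 0$, whose numerical radius is $\|y\|/2$. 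This obstruction is exactly where the difficulty of the converse lives. Indeed, the ``soft'' part of the construction is available: compactness of $\big(S(A), \mk_L\big)$ gives an $\ep$-net of states $\mu_1,\ldots,\mu_n$, and a partition of unity $\{\varphi_i\}$ on $S(A)$ subordinate to the $\ep$-balls yields the positive unital finite rank map $\Phi(x) := \sum_i \mu_i(x)\varphi_i$ into $C(S(A))$ with $\|\hat{x} - \Phi(x)\| \leq \ep \cd L(x)$ for \emph{all} $x \in \C A$, where $\hat{x}(\mu) := \mu(x)$. The problem is that $x \mapsto \hat{x}$ is isometric only on selfadjoint elements (Kadison's function representation), and the whole content of the hard direction is to repair this failure of isometry while keeping $\Phi$ positive, unital and of finite rank in the same target.

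Your closing sentence defers precisely this step to \cite[Theorem 3.1]{Kaa:ExPr} -- but that theorem \emph{is} the statement being proved, so the proposal is circular at its crux. To summarize: one implication is fine, but the substantive implication is missing its key idea, and the specific finite-dimensional or commutative targets you suggest are provably incompatible with the isometry requirement in the definition of an $\ep$-approximation.
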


As a first example, consider a classical compact metric space $(X,d)$ and let $\Lip(X)$ denote the Lipschitz functions on $X$, sitting as a norm-dense unital $*$-subalgebra of the unital $C^*$-algebra $C(X)$ of continuous functions on $X$. We equip the Lipschitz functions with the slip-norm $L_d$ defined by 
\[
L_d(f) := \sup\Big\{ \frac{| f(p_1)-f(p_2) |}{d(p_1,p_2) }\mid p_1\neq p_2  \Big\} ,
\]
so that $L_d(f)$ agrees with the Lipschitz constant of a Lipschitz function $f$ (the above supremum is by convention equal to zero in the case where $X$ is a singleton). It then holds that the pair $\big( \Lip(X), L_d\big)$ is a compact quantum metric space, see for example \cite[Theorem 2.5.4]{RaRu:MTP}. For a more recent proof using $\ep$-approximations we refer to \cite[Lemma 3.4]{Kaa:ExPr}.

An important class of noncommutative examples arises from unital spectral triples, see \cite{Con:CFH} and \cite[Chapter 6]{Con:NCG}. Indeed, suppose that we have a unital spectral triple $(\C A,H,D)$ and let us identify $\C A$ with a unital $*$-subalgebra of the bounded operators on $H$ (via a fixed unital injective $*$-homomorphism). We define the unital $C^*$-algebra $A \su \B B(H)$ as the closure of $\C A$ in operator norm. The unital spectral triple gives rise to a closable derivation $d : \C A \to \B B(H)$ defined by putting $d(x) := \T{cl}([D,x])$ so that $d(x)$ agrees with the closure of the commutator $[D,x] : \T{Dom}(D) \to H$. This construction does in turn yield the slip-norm
\[
L_D : \C A \to [0,\infty) \quad L_D(x) := \| d(x) \| .
  \]
  In this context, the extended metric $\mk_{L_D}$ on the state space of $A$ is often referred to as the \emph{Connes metric}.

  The terminology ``spectral metric space'' introduced here below comes from the paper \cite{BMR:DSS}. Let us however emphasize that there are many examples of unital spectral triples which are not spectral metric spaces. It can even happen that the kernel of $L_D$ agrees with the scalars while some states have distance infinity with respect to the Connes metric, see \cite{KyNe:FMS} for an elementary example of this phenomenon. For a more geometric example we refer to \cite{CAMW:SDM} where a similar behaviour is exhibited for the Moyal plane (even though this is in the non-compact setting).
  
    \begin{dfn}\label{d:specmet}
We say that the unital spectral triple $(\C A,H,D)$ is a \emph{spectral metric space} if $(\C A,L_D)$ is a compact quantum metric space.
  \end{dfn}

  Let us clarify that the unital $*$-subalgebra $\C A \su A$ appearing in the unital spectral triple $(\C A,H,D)$ can be enlarged in a substantial fashion. Indeed, instead of $\C A$ we could consider the \emph{Lipschitz algebra} $\Lip_D(A) \su A$ defined as follows: An element $x \in A$ belongs to $\Lip_D(A)$ if and only if the supremum
  \begin{equation}\label{eq:suplip}
\sup\big\{ \big| \inn{D \xi, x \eta} - \inn{\xi,x D \eta}\big| \mid \xi , \eta \in \T{Dom}(D) \, , \, \, \| \xi \|_2 , \| \eta \|_2 \leq 1 \big\} 
\end{equation}
is finite, see \cite[Theorem 3.8]{Chr:WDO} for a number of equivalent conditions. We record that $\big( \Lip_D(A),H,D \big)$ is a unital spectral triple and we therefore obtain the slip-norm $L_D : \Lip_D(A) \to [0,\infty)$. To avoid confusion we often denote this latter slip-norm by $L_D^{\max}$ and notice that $L_D^{\max}(x) = L_D(x)$ for all $x \in \C A$. Likewise, we often apply the notation $d_{\max} : \Lip_D(A) \to \B B(H)$ for the closed derivation coming from the enlarged unital spectral triple. Remark that $L_D^{\max}(x)$ agrees with the supremum in \eqref{eq:suplip} for $x \in \Lip_D(A)$ and that this supremum in turn agrees with the operator norm of $d_{\max}(x)$. 
  
  It is important to keep in mind that $\Lip_D(A) \su A$ can very well be different from the domain of the closure $\ov{d}$ of the derivation $d : \C A \to \B B(H)$, but it always holds that $\T{Dom}(\, \ov{d} \, ) \su \Lip_D(A)$. In particular we get from Theorem \ref{t:approx} that if $(\Lip_D(A),H,D)$ is a spectral metric space, then $(\C A,H,D)$ is a spectral metric space. We expect that the converse is not true but at the time of writing we are not aware of any counter examples. 

\subsection{Quantum $SU(N)$}\label{sec:suqn}
We are now going to review some of the main concepts pertaining to the quantized version of the special unitary group. Our main reference on these matters is the book \cite{KlSc:QGR} and in particular the chapters 6.1 and 9.2. For completeness we also refer the reader to the foundational papers on quantized enveloping algebras, \cite{Dri:QG,Jim:QYB} and $C^*$-algebraic compact quantum groups, \cite{Wor:CMP, Wor:CQG}.   

Let us start out by introducing the universal unital $\cc$-algebra $\C O(M_q(N))$ generated by $N^2$ elements $u_{ij}$, $i,j \in \inn{N}$ subject to the quadratic relations
\begin{equation}\label{eq:quadsuN}
\begin{split}
u_{ik} u_{jk} & = q \cd u_{jk} u_{ik} \, \, , \, \, \, u_{ki} u_{kj} = q \cd u_{kj} u_{ki} \quad \T{for } i < j \\
[u_{il}, u_{jk}] & = 0 \, \, , \, \, \, [u_{ik}, u_{jl}] = (q - q^{-1}) \cd u_{il} u_{jk} \quad \T{for } i < j \, \, , \, \, \, k < l . 
\end{split}
\end{equation}

Whenever we have two non-empty subsets $I,J \su \inn{N}$ with the same number of elements, say $n$, we may choose $1 \leq i_1 < \ldots < i_n \leq N$ and $1 \leq j_1 < \ldots < j_n \leq N$ such that $I = \{i_1,\ldots,i_n\}$ and $J = \{j_1,\ldots,j_n\}$. The quantum $n$-minor determinant with respect to the subsets $I$ and $J$ is then the element in $\C O(M_q(N))$ defined by
\[
D_{IJ} := \sum_{\si \in S_n} (-q)^{\io(\si)} u_{i_{\si(1)} j_1} \clc u_{i_{\si(n)} j_n} ,
\]
where $S_n$ denotes the group of permutations of the set $\inn{n}$ and $\io(\si)$ is the number of inversions in a permutation $\si \in S_n$. For $I = J = \inn{N}$ we refer to $D_{IJ}$ as the \emph{quantum determinant}. Moreover, for each $i,j \in \inn{N}$ we define the \emph{cofactor}
\[
\wit{u}_{ij} := (-q)^{i - j} D_{ \inn{N} \sem \{j\}, \inn{N} \sem \{i\} } . 
\]

The notation $\C O(SL_q(N))$ refers to the unital $\cc$-algebra obtained as the quotient of $\C O(M_q(N))$ by the ideal generated by $D_{ \inn{N},\inn{N} } - 1$ (so inside $\C O(SL_q(N))$, the quantum determinant equals one). It follows from \cite[Chapter 9.2.3, Proposition 10]{KlSc:QGR} that $\C O(SL_q(N))$ becomes a Hopf algebra with coproduct, counit and antipode given on generators by
\[
\De(u_{ij}) := \sum_{k = 1}^N u_{ik} \ot u_{kj}\, , \quad \epsilon(u_{ij}):= \de_{ij}\, , \quad S(u_{ij}) := \wit{u}_{ij} .
\]

As in \cite[Chapter 9.2.4]{KlSc:QGR}, the \emph{coordinate algebra for quantum $SU(N)$} is defined as the Hopf $*$-algebra $\C O(SU_q(N))$ which agrees with $\C O(SL_q(N))$ as a Hopf algebra, but with involution defined on generators by
\[
u_{ij}^* := S(u_{ji}) = \wit{u}_{ji} .
\]

For later use it is important to record the following further relations inside $\C O(SU_q(N))$, see \cite[Proposition 8, Chapter 9.2.2]{KlSc:QGR}:
\begin{equation}\label{eq:unitrans}
\begin{split}
  & \sum_{k = 1}^N u_{ki}^* u_{kj} = \de_{ij} = \sum_{k = 1}^N u_{ik} u_{jk}^* \\
  & \sum_{k = 1}^N q^{2(k - j)} \cd u_{ki} u_{kj}^* = \de_{ij} = \sum_{k = 1}^N q^{2(i - k)} \cd u_{ik}^* u_{jk} .
\end{split}
\end{equation}

Letting $u \in \B M_N\big(\C O(SU_q(N))\big)$ denote the $(N \ti N)$-matrix with entries $u_{ij}$, $i,j \in \inn{N}$, we see from \eqref{eq:unitrans} that $u$ is unitary, and we refer to this unitary matrix as the \emph{fundamental unitary}. 
\medskip

We also introduce the \textit{quantized enveloping algebra of $\G{su}(N)$} defined as the universal unital $*$-algebra $\C U_q(\G{su}(N))$ generated by $3\ell$ elements $E_i,K_i,K_i^{-1}$, $i \in \inn{\ell}$ subject to the following relations for all $i,j \in \inn{\ell}$:
\[
\begin{split}
& K_i K_j = K_j K_i \, \, , \, \, \, K_i K_i^{-1} = 1 = K_i^{-1} K_i \, \, , \, \, \, K_i = K_i^* \\
& K_i E_j = q^{\de_{ij} - \frac{1}{2} \de_{i,j-1} - \frac{1}{2} \de_{i,j+1}} E_j K_i \\
  & [E_i,E_j^*] = \de_{ij} \cd \frac{K_i^2 - K_i^{-2}}{q - q^{-1}} \\
  & [E_i,E_j] = 0 \quad |i-j| > 1 \\
& E_i^2 E_j - (q + q^{-1}) E_i E_j E_i +  E_j E_i^2 = 0 \quad |i - j | = 1 .
\end{split}
\]
We put $F_i := E_i^*$. The unital $*$-algebra $\C U_q(\G{su}(N))$ becomes a Hopf $*$-algebra with coproduct, counit and antipode determined by the formulae
\[
\begin{split}
& \De(K_i) := K_i \ot K_i \, \, , \, \,  \, \De(E_i) := E_i \ot K_i + K_i^{-1} \ot E_i \\
& \epsilon(K_i) := 1 \, \,  , \, \, \, \epsilon(E_i) := 0 \\ 
& S(K_i) := K_i^{-1} \, \, , \, \, \, S(E_i) := -q E_i . 
\end{split}
\]
Remark that $S(F_i) = S(E_i^*) = S^{-1}(E_i)^* = - q^{-1} F_i$. For more details we refer the reader to \cite[Chapter 6.1.2]{KlSc:QGR} -- notice here that Klimyk and Schm\"udgen apply the notation $\breve{U}_q(su_N)$ for the Hopf $*$-algebra we are denoting by $\C U_q(\G{su}(N))$.  


The quantized enveloping algebra can be represented on the Hilbert space $\cc^N$ via the unital $*$-homomorphism $\pi : \C U_q( \G{su}(N)) \to \B B(\cc^N)$ defined on generators by the formulae
\[
\pi(E_i) := e_{i+1,i} \, \, \T{ and } \, \, \, \pi(K_i) := \sum_{j = 1}^N e_{jj} \cd q^{\frac{1}{2} ( \de_{j,i+1} - \de_{j,i})} .
\]
Letting $e_1,\ldots,e_N$ denote the standard orthonormal basis for $\cc^N$ we obtain the functionals $\pi_{ij} : \C U_q(\G{su}(N)) \to \cc$ given by $\pi_{ij}(\eta) := \inn{e_i,\pi(\eta) e_j}$. These functionals give rise to a dual pairing of Hopf $*$-algebras as made explicit in the next theorem. We are presenting an important detail on the proof since it is less easy to find in \cite{KlSc:QGR} even though the result is very much related to \cite[Theorem 18, Chapter 9.4]{KlSc:QGR}. 

\begin{theorem}\label{t:dualpair}
 There exists a unique dual pairing of Hopf $*$-algebras $\inn{\cd,\cd} : \C U_q(\G{su}(N)) \ti \C O( SU_q(N)) \to \cc$ such that
  \[
\inn{\eta,u_{ij}} = \pi_{ij}(\eta) \quad i,j \in \inn{N} .
\]
\end{theorem}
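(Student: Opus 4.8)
The plan is to realise the pairing as an algebra homomorphism into a convolution algebra and then to show that it descends through the defining relations of $\C O(SU_q(N))$. Write $U := \C U_q(\G{su}(N))$ and let $U^*$ denote the linear dual, which is a unital associative algebra under the convolution product $(f * g)(\eta) := (f \ot g)\big(\De(\eta)\big)$ with unit $\epsilon$. For each $n$ the iterated coproduct $\De^{(n)} : U \to U^{\ot n}$ composed with $\pi^{\ot n}$ yields the tensor-power representation $\rho_n := \pi^{\ot n} \circ \De^{(n)} : U \to \B B\big((\cc^N)^{\ot n}\big)$, which is again an algebra homomorphism; the idea is that a word $u_{i_1 j_1} \clc u_{i_n j_n}$ should pair with $\eta$ by reading off the matrix coefficient of $\rho_n(\eta)$ between $e_{i_1} \ot \cdots \ot e_{i_n}$ and $e_{j_1} \ot \cdots \ot e_{j_n}$. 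Concretely, I would first send the free unital algebra on the symbols $u_{ij}$ into $U^*$ by $u_{ij} \mapsto \pi_{ij}$ and extend multiplicatively using the convolution product. This is well defined since the source is free and $U^*$ is a unital algebra, and by construction it encodes the prescribed values $\inn{\eta, u_{ij}} = \pi_{ij}(\eta)$.

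The substance is to check that this homomorphism annihilates the defining relations. First, the quadratic relations \eqref{eq:quadsuN} are exactly the Faddeev--Reshetikhin--Takhtajan relations $R u_1 u_2 = u_2 u_1 R$ for the numerical $R$-matrix of the fundamental representation $\pi$, so the homomorphism respects them if and only if the associated braiding $\hat R$ on $\cc^N \ot \cc^N$ commutes with $\rho_2(\eta)$ for every $\eta \in U$. Since $\rho_2$ is an algebra homomorphism, the set of such $\eta$ is a subalgebra of $U$, so it suffices to verify the commutation on the algebra generators, where $\rho_2(E_i) = \pi(E_i) \ot \pi(K_i) + \pi(K_i^{-1}) \ot \pi(E_i)$ and similarly for $F_i$ and $K_i^{\pm 1}$; this is a finite computation expressing that $\hat R$ intertwines the fundamental representation with itself. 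Second, pairing with the quantum determinant $D_{\inn{N},\inn{N}}$ computes the action of $U$ on the top quantum exterior power of $\cc^N$, which is one dimensional and trivial, so $\inn{\eta, D_{\inn{N},\inn{N}}} = \epsilon(\eta)$ and the relation $D_{\inn{N},\inn{N}} - 1$ is annihilated; this gives descent to $\C O(SL_q(N)) = \C O(SU_q(N))$. Once the pairing is defined, the bialgebra pairing axioms reduce to generator-level checks: compatibility with the coproduct on $\C O(SU_q(N))$ follows from $\pi_{ij}(\eta \xi) = \sum_k \pi_{ik}(\eta)\pi_{kj}(\eta')$ together with $\De(u_{ij}) = \sum_k u_{ik} \ot u_{kj}$, while $\inn{1, u_{ij}} = \de_{ij} = \epsilon(u_{ij})$ settles the counits, and compatibility with the antipodes is then automatic for a pairing of Hopf algebras.

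For the $*$-structures I would verify on generators the identity $\inn{\eta^*, u_{ij}} = \overline{\inn{\eta, S(u_{ij})^*}}$: since $\pi$ is a $*$-homomorphism we have $\inn{\eta^*, u_{ij}} = \pi_{ij}(\eta^*) = \overline{\pi_{ji}(\eta)} = \overline{\inn{\eta, u_{ji}}}$, while the identities $u_{ji}^* = \wit{u}_{ij} = S(u_{ij})$ give $S(u_{ij})^* = u_{ji}$, so that both sides coincide; this extends to all of $U$ and $\C O(SU_q(N))$ by multiplicativity. Uniqueness is immediate because the $u_{ij}$ generate $\C O(SU_q(N))$ as an algebra and the first pairing axiom forces the value of $\inn{\eta, \cdot}$ on any product once it is prescribed on the generators.

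I expect the main obstacle to be the $R$-matrix computation certifying that the quadratic relations are respected, that is, that $\hat R$ is an intertwiner of the fundamental representation, together with the identification of the quantum determinant pairing with the counit; the remaining bialgebra axioms and the $*$-compatibility are short verifications on generators. One may alternatively import the existence of the Hopf pairing between $\C U_q(\G{su}(N))$ and $\C O(SL_q(N))$ from \cite[Theorem 18, Chapter 9.4]{KlSc:QGR}, in which case the only genuinely new points are the normalisation $\inn{\eta, u_{ij}} = \pi_{ij}(\eta)$ and the compatibility with the $*$-operations.
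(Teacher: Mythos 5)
Your proposal is correct and takes essentially the same route as the paper: the paper's proof likewise introduces the matrix $\widehat{R}$ on $\cc^N \ot \cc^N$, verifies that it commutes with $(\pi \ot \pi)\De(\eta)$ for all $\eta$, concludes that the functionals $\pi_{ij}$ satisfy the relations \eqref{eq:quadsuN} inside the dual Hopf $*$-algebra $\C U_q(\G{su}(N))^\ci$, and then applies the argument of \cite[Theorem 18, Chapter 9.4]{KlSc:QGR} to see that the resulting homomorphism $\C O(M_q(N)) \to \C U_q(\G{su}(N))^\ci$ annihilates $D_{\inn{N},\inn{N}} - 1$ and induces a homomorphism of Hopf $*$-algebras. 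Your convolution-algebra formulation and the top quantum exterior power argument for the determinant are just explicit renderings of these same steps (noting the harmless typo $\eta'$ for $\xi$ in your coproduct compatibility check).
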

\begin{proof}
  Define the operator $\widehat{R} : \cc^N \ot \cc^N \to \cc^N \ot \cc^N$ by putting
  \[
  \widehat{R} := q \cd \sum_{i = 1}^N e_{ii} \ot e_{ii} + (q - q^{-1}) \cd \sum_{1 \leq i < j \leq N} e_{ii} \ot e_{jj}
  + \sum_{i \neq j} e_{ij} \ot e_{ji} .
  \]
  It can then be verified that $\widehat{R} \cd (\pi \ot \pi) \De(\eta) = (\pi \ot \pi) \De(\eta) \cd \widehat{R}$ for all $\eta \in \C U_q(\G{su}(N))$. This identity implies that the functionals $\pi_{ij} : \C U_q(\G{su}(N)) \to \cc$ satisfy the relations in \eqref{eq:quadsuN} inside the dual Hopf $*$-algebra $\C U_q(\G{su}(N))^\ci$ (with $\pi_{ij}$ replacing $u_{ij}$). We therefore get a unital algebra homomorphism $\Phi : \C O(M_q(N)) \to \C U_q(\G{su}(N))^\ci$ and, applying the argument from \cite[Theorem 18, Chapter 9.4]{KlSc:QGR}, it follows that $\Phi$ vanishes on the element $D_{\inn{N},\inn{N}} - 1 \in \C O(M_q(N))$. It is then not difficult to see that the induced map $\Phi : \C O(SU_q(N)) \to \C U_q(\G{su}(N))^\ci$ is in fact a homomorphism of Hopf $*$-algebras and this ends the proof of the theorem.
\end{proof}

The dual pairing from Theorem \ref{t:dualpair} gives rise to a left action of $\C U_q(\G{su}(N))$ on $\C O(SU_q(N))$ (by linear endomorphisms) defined by 
\begin{equation}\label{eq:actenv}
d_\eta (x)   := (\inn{S^{-1}(\eta) ,\cdot} \ot 1)\Delta(x)  
\end{equation}
for $x\in \mathcal{O}(SU_q(N)) $ and $\eta \in \C U_q(\G{su}(N))$. We record that
\begin{equation}\label{eq:actgener}
\begin{split}
  d_{K_r}(u_{ij})&= \inn{K_r^{-1}, u_{ii}} \cd u_{ij}  = q^{\frac{1}{2}\de_{ir} - \frac{1}{2} \de_{i,r+1}} \cd u_{ij} \\
  d_{E_r}(u_{ij}) & = -q^{-1} \inn{E_r, u_{i,i-1}} \cd u_{i-1,j} = -q^{-1} \de_{i,r+1} \cd u_{i-1,j} \\
  d_{F_r}(u_{ij}) & = -q \inn{F_r,u_{i,i+1}} \cd u_{i+1,j} = - q \de_{ir} \cd u_{i+1,j}
\end{split}
\end{equation}
for all $r \in \inn{\ell}$ and $i,j \in \inn{N}$. Furthermore, it follows from the defining properties of a dual pairing that we have the identities
\begin{equation}\label{eq:calcrule}
d_\eta(xy) = d_{\eta_{(2)}}(x)d_{\eta_{(1)}}(y)  \quad \T{and} \quad d_\eta(x^*)=d_{S(\eta^*)}(x)^* 
\end{equation}
for all $x,y\in \C O(SU_q(N))$ and $\eta \in  \C U_q(\G{su}(N))$ (where we apply the Sweedler notation $\Delta(\eta)=\eta_{(1)}\otimes \eta_{(2)}$). In order to avoid the flip in the first identity in \eqref{eq:calcrule} one may replace it by the rule $d_{S(\eta)}(x y) = d_{S(\eta_{(1)})}(x) d_{S(\eta_{(2)})}(y)$. 

Let $i \in \inn{\ell}$. As a consequence of the above considerations we get that $d_{K_i}$ is an algebra automorphism of $\C O(SU_q(N))$ with inverse $d_{K_i}^{-1}=d_{K_i^{-1}}$. It moreover holds that $d_{E_i}$ and $d_{F_i}$ are twisted derivations of $\C O(SU_q(N))$, meaning that they satisfy the twisted Leibniz rules
\begin{equation}\label{eq:twileib}
\begin{split}
d_{E_i}(xy)&= d_{E_i}(x)d_{K_i^{-1}}(y)+ d_{K_i}(x) d_{E_i}(y) \quad \T{and} \\
d_{F_i}(xy)&= d_{F_i}(x)d_{K_i^{-1}}(y)+d_{K_i}(x) d_{F_i}(y)
\end{split}
\end{equation}
for all $x,y \in \C O(SU_q(N))$. The behaviour of the operations $d_{E_i}$, $d_{F_i}$ and $d_{K_i}$ with respect to the involution is summarized by the formulae 
\[
d_{E_i}(x^*)=-q^{-1}d_{F_i}(x)^* \, \, , \, \, \, d_{F_i}(x^*)=-q d_{E_i}(x)^* \, \, \T{ and } \, \, \, d_{K_i}(x^*)=d_{K_i}^{-1}(x)^* .
\]

We define \textit{quantum} $SU(N)$ -- denoted $C(SU_q(N))$ -- as the universal $C^*$-algebra generated by $\C O(SU_q(N))$. The comultiplication extends to a unital $*$-homomorphism $\Delta : C(SU_q(N))\to C(SU_q(N))\ot_{\T{min}} C(SU_q(N)) $ (applying the minimal tensor product), and in this way $C(SU_q(N))$ is turned into a ($C^*$-algebraic) compact quantum group, see \cite{Wor:CMP, Wor:CQG}. It also follows by construction that the counit extends to a unital $*$-homomorphism $\epsilon : C(SU_q(N))\to \B C$. 

Let us reserve the notation $h : C(SU_q(N))\to \B C$ for the Haar state of the compact quantum group $C(SU_q(N))$ and recall that this state is uniquely determined by the identity $h(1)=1$ together with the bi-invariance
\begin{equation}\label{eq:haar}
(1\otimes h)\Delta(x)=h(x)\cd 1=(h\otimes 1)\Delta(x) \quad \T{for all } x\in C(SU_q(N)) ,
\end{equation}
see \cite[Theorem 1.3]{Wor:CQG}. As a consequence of the bi-invariance property we get that
\begin{equation}\label{eq:calcrulehaar}
h  \circ d_\eta =\epsilon(\eta)\cdot  h \quad \T{for all } \eta \in \C U_q(\G{su}(N)).
\end{equation}

Let us denote the modular automorphism of the Haar state by $\theta : \C O(SU_q(N))\to \C O(SU_q(N))$, see \cite[Theorem 5.6]{Wor:CQG}. Record in this respect that $\te(x^*) = \te^{-1}(x)^*$ and that $h(xy)=h(y\theta(x))$ for all $x,y\in \C O(SU_q(N))$. This modular automorphism can be computed explicitly on the generators for $\C O(SU_q(N))$. By \cite[Proposition 34 and Example 9, Chapter 11.3.4]{KlSc:QGR} it holds that
\begin{equation}\label{eq:modular}
\te(u_{ij}) = q^{2(i + j - N - 1)} u_{ij} \quad i,j \in \inn{N} .
\end{equation}
Let $L^2(SU_q(N))$ denote the Hilbert space associated with the GNS-construction applied to the Haar state. We know from \cite[Theorem 1.1]{Nag:HQG} that the Haar state is faithful and it follows that both the GNS-representation $\pi : C(SU_q(N)) \to \B B(L^2(SU_q(N)))$ and the canonical linear map $\io : C(SU_q(N)) \to L^2(SU_q(N))$ are injective. We recall that inner products are linear in the second variable so that $\inn{ \io(x),\io(y) } = h(x^* y)$ for all $x,y \in C(SU_q(N))$.

\subsection{Peter-Weyl decomposition and Schur orthogonality}\label{ss:weylschur}
In this subsection we briefly remind the reader of the Peter-Weyl decomposition of the coordinate algebra $\C O(SU_q(N))$ and a relevant consequence of the Schur orthogonality relations. These results are in fact available in the more general setting of cosemisimple Hopf algebras, see \cite[Chapter 11.2]{KlSc:QGR}, but we are here focusing on the specific example $\C O(SU_q(N))$.

Recall that a \emph{corepresentation} of $\C O(SU_q(N))$ is a linear map $v : \cc^n \to \cc^n \ot \C O(SU_q(N))$ satisfying the identities
\[
(1 \ot \De) v = (v \ot 1) v \, \, \T{ and } \, \, \, (1 \ot \epsilon) v = 1 .
\]
A corepresentation $v$ acting on $\cc^n$ (with standard orthonormal basis $\{e_k\}_{k = 1}^n$) gives rise to the \emph{matrix coefficients} $v_{ij}$ labelled by $i,j \in \{1,\ldots,n\}$ and satisfying that
\[
v(e_j) = \sum_{i = 1}^n e_i \ot v_{ij}  \quad \T{for all } j \in \inn{n} .
\]
Our corepresentation $v$ is called \emph{unitary}, if $v_{ij}^* = S(v_{ji})$ for all $i,j \in \inn{n}$. We say that a corepresentation $v$ is \emph{irreducible}, if it is impossible to find a proper subspace $U \su \cc^n$ such that
\[
v(U) \su U \ot \C O(SU_q(N)) .
\]
Two irreducible corepresentations $v$ and $w$ (both acting on $\cc^n$) are said to be \emph{equivalent}, if there exists an invertible linear map $T : \cc^n \to \cc^n$ such that $v T = (T \ot 1) w$. Letting $\widehat{\C O(SU_q(N))}$ denote the set of equivalence classes of irreducible unitary corepresentations we choose a representative $v^\al : \cc^{n_\al} \to \cc^{n_\al} \ot \C O(SU_q(N))$ for each $\al \in \widehat{\C O(SU_q(N))}$. 

The \emph{Peter-Weyl decomposition} in this setting says that the set of matrix coefficients
\[
\big\{ v_{ij}^\al \mid \al \in \widehat{\C O(SU_q(N))} \, , \, \, i,j \in \inn{n_\al} \big\}
\]
forms a vector space basis for $\C O(SU_q(N))$, see \cite[Theorem 13, Chapter 11.2.1]{KlSc:QGR}. We emphasize here that the entries $u_{ij}$ for $i,j \in \inn{N}$ of the fundamental unitary $u$ are in fact matrix coefficients of a unitary corepresentation and it therefore follows that $\C O(SU_q(N))$ is a compact matrix quantum group algebra in the sense of \cite[Definition 10, Chapter 11.3.1]{KlSc:QGR}.

For each $\al \in \widehat{\C O(SU_q(N))}$ we define the finite dimensional subspace
\[
\C C(\al) := \T{span}\big\{ v_{ij}^\al \mid i,j \in \inn{n_\al} \big\} \su \C O(SU_q(N)) 
\]
and record that $\C C(\al)$ is in fact a coalgebra in its own right since $\De(\C C(\al)) \su \C C(\al) \ot \C C(\al)$. The \emph{Schur orthogonality relations} then imply that these subspaces are mutually orthogonal meaning that
\[
h( x^* y ) = 0 
\]
whenever $x \in \C C(\al)$ and $y \in \C C(\be)$ for some $\al \neq \be$, see \cite[Proposition 15, Chapter 11.2.2]{KlSc:QGR}.

\subsection{Quantum spheres and quantum projective spaces}\label{ss:quasph}
In this subsection we recall the definition of the Vaksman-Soibelman quantum spheres and the associated quantum projective spaces introduced in \cite{VaSo:AQS}. Inside the quantum projective space (of complex dimension $\ell$) we pay particular attention to a small commutative unital $C^*$-subalgebra which is isomorphic to the continuous functions on the quantized interval $I_q := \big\{ q^{2m} \mid m \in \nn_0\big\} \cup \{0\}$. The quantized interval is going to play a pivotal role in our approach to the spectral metric structure of quantum projective spaces. 

For each $i \in \inn{N}$ we put $z_i := u_{Ni}$ so that $z_i$ agrees with the $i^{\T{th}}$ entry in the last row of the fundamental unitary $u$. We let $\C O(S_q^{2\ell + 1})$ denote the smallest unital $*$-subalgebra of $\C O(SU_q(N))$ such that $z_i$ belongs to $\C O(S_q^{2\ell + 1})$ for all $i$ in $\inn{N}$. This unital $*$-subalgebra is referred to as the \emph{coordinate algebra for the quantum sphere}. The following identities are consequences of the defining relations for the coordinate algebra for quantum $SU(N)$: 
\begin{equation}\label{eq:defsphere}
\begin{split}
& z_iz_j =qz_j z_i  \quad i < j \, \, , \, \, \, z_i^* z_j =qz_jz_i^* \quad i \neq j \\ 
&  [z_i^*,z_i]= (1-q^2) \sum_{j = 1}^{i - 1} z_jz_j^* \quad i > 1 \\
& [z_1^*,z_1] = 0 \, \, , \, \, \, \sum_{i = 1}^N z_iz_i^* =1 . 
 \end{split}
\end{equation}
We let $\C O(\B CP_q^{\ell} )$ denote the smallest unital $*$-subalgebra of $\C O(S_q^{2\ell + 1})$ such that $z_i z_j^* \in \C O(\B CP_q^\ell)$ for all $i,j \in \inn{N}$. This unital $*$-subalgebra is called the \emph{coordinate algebra for the quantum projective space}.


%

We define the unital $C^*$-algebras $C(S_q^{2\ell + 1})$ and $C(\B CP_q^\ell)$ as the $C^*$-closures of $\C O(S_q^{2\ell + 1})$ and $\C O(\B CP_q^\ell)$ inside $C(SU_q(N))$. We refer to $C(S_q^{2\ell + 1})$ and $C(\B CP_q^\ell)$ as the \textit{quantum sphere} and the \textit{quantum projective space}, respectively.

The coproduct $\De$ on the coordinate algebra for quantum $SU(N)$ induces a right coaction on both the coordinate algebra for the quantum sphere and the quantum projective space. These coactions are denoted by
\[
\de : \C O(S_q^{2\ell + 1}) \to \C O(S_q^{2\ell + 1}) \ot \C O(SU_q(N)) \, \, \T{ and } \, \, \,
\de : \C O(\B CP_q^{\ell}) \to \C O(\B CP_q^{\ell}) \ot \C O(SU_q(N)) 
\]
and they extend by continuity to right coactions of the compact quantum group $C(SU_q(N))$ on the quantum sphere and the quantum projective space. 
\medskip

We introduce the elements $x_i := z_i z_i^*$ for $i \in \inn{N}$ together with the sums
\[
y_j := \sum_{i=1}^{j} x_i =  \sum_{i = 1}^j z_i z_i^* \quad j \in \inn{N} . 
\]
It is relevant to analyse the $C^*$-subalgebra $C^*(z_N z_N^*, z_N^* z_N)$ of quantum projective space generated by the elements $z_Nz_N^*$ and $z_N^* z_N$. Since $z_N z_N^* = 1 - y_\ell$ and $z_N^* z_N = 1 - q^2 y_\ell$ we get that $C^*(z_N z_N^*, z_N^* z_N)$ is $*$-isomorphic to the continuous functions on the spectrum of $y_\ell$. An application of \cite[Lemma 4.2]{HoSz:QSPS} (see also \cite[Proposition 2.11]{MiKa:HSV}) shows that
\[
\Sp(y_\ell) = \big\{q^{2m}\mid m\in \B N_0 \big\} \cup \{0\}. 
\]
As mentioned earlier, we refer to this subset of $\B R$ as the \emph{quantized interval} and apply the notation $I_q := \Sp(y_\ell)$ so that $C^*(z_Nz_N^*,z_N^*z_N) \cong C(I_q)$ via Gelfand duality. We shall from now on identify $C(I_q)$ with a unital $C^*$-subalgebra of $C(\B CP_q^\ell)$. To be explicit, this identification sends the inclusion $I_q \to \B C$ to the element $y_\ell$.

The indicator function for the point $q^{2k} \in \B R$ restricts to a continuous function on $\Sp(y_\ell)$ as soon as $k \in \nn_0$. The corresponding projection in $C(I_q)$ (obtained via continuous functional calculus) is denoted by $p_k$. For $k = 0$, we may describe $p_0$ as the limit $p_0 =\lim_{n\to \infty} y_\ell^n$ and for $k > 0$ it can be verified that
\begin{equation}\label{eq:mproj}
  p_k =\lim_{n\to \infty} \left(\frac{1}{q^{2kn}}y_\ell^n -\sum_{i=1}^k \frac{1}{q^{2in}}p_{k - i} \right) .
\end{equation}
We apply the convention that $p_k := 0$ for $k \in \zz \sem \nn_0$.

The unital $*$-subalgebra of $\C U_q(\G{su}(N))$ generated by $K_i$, $K_i^{-1}$ and $E_i$ for $i \in \inn{\ell - 1}$ is denoted by $\C U_q(\G{su}(\ell))$ (for $\ell = 1$ this is just $\cc 1$ by convention). It is then important to clarify that
\begin{equation}\label{eq:homsph}
d_\eta(x) = \epsilon(\eta) \cd x \quad \T{for all } x \in \C O(S_q^{2\ell + 1}) \T{ and } \eta \in \C U_q(\G{su}(\ell)) .
\end{equation}
In the case where $x \in \C O(\B C P_q^\ell) \su \C O(S_q^{2\ell + 1})$ it moreover holds that
\begin{equation}\label{eq:hompro}
d_{K_\ell}(x) = x .
\end{equation}

\section{Spectral geometry of quantum projective spaces}\label{s:specgeo}
In this section, we review the spectral geometry of quantum projective spaces as witnessed by the unital spectral triple introduced by D'Andrea and D\k{a}browski in \cite{DaDa:DQP} building on the earlier work \cite{DaDaLa:NGQPP} and \cite{DaSi:DSP} where the lower dimensional cases are treated. It should be emphasized that related constructions appear in many places, see e.g. \cite{DaBuSo:DDQ,KrTu:DDQ,Mat:DDQ}, but in the present text we follow the original approach of D'Andrea and D\k{a}browski closely. 

We start out with a small well-known result relating the adjoint operation on $\C U_q(\G{su}(N))$ to the adjoint operation of closable unbounded operators on the Hilbert space $L^2(SU_q(N))$. As usual it is convenient to apply Sweedler notation for coproducts.

\begin{lemma}\label{l:ip}
For every $x,y\in \mathcal{O}(SU_q(N))$ and $\xi \in \C U_q(\G{su}(N))$ it holds that
\[
\inn{d_{\xi^*}(x) , y } = \inn{ x,d_\xi(y)} . 
\]
\end{lemma}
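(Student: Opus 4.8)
The plan is to compute both sides using the definition of the action $d_\xi$ and the relationship between the inner product and the Haar state. Recall that $\inn{a,b} = h(a^* b)$ for $a,b \in \C O(SU_q(N))$, so the desired identity $\inn{d_{\xi^*}(x), y} = \inn{x, d_\xi(y)}$ becomes the statement
\[
h\big( d_{\xi^*}(x)^* \, y \big) = h\big( x^* \, d_\xi(y) \big) .
\]
First I would rewrite the left-hand side using the involution rule from \eqref{eq:calcrule}, namely $d_\eta(x^*) = d_{S(\eta^*)}(x)^*$, which after replacing $x$ by $x^*$ and rearranging gives an expression for $d_{\xi^*}(x)^*$ in terms of $d_{S((\xi^*)^*)}(x^*) = d_{S(\xi)}(x^*)$. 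Thus $d_{\xi^*}(x)^* = d_{S(\xi)}(x^*)$ and the left-hand side becomes $h\big( d_{S(\xi)}(x^*) \, y \big)$.

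The key step is then to move the twisted derivation across the Haar state and onto the factor $y$. This is exactly the kind of manipulation governed by the bi-invariance of the Haar state, and the relevant packaged consequence is \eqref{eq:calcrulehaar}, which states $h \circ d_\eta = \epsilon(\eta) \cd h$. The strategy is to expand the product inside the Haar state using the calculus rule $d_\eta(ab) = d_{\eta_{(2)}}(a) d_{\eta_{(1)}}(b)$ from \eqref{eq:calcrule} (or the $S$-twisted variant), apply \eqref{eq:calcrulehaar} to collapse the outer action via the counit, and then use the counit and antipode axioms together with $\epsilon \circ S = \epsilon$ to peel off the action from $x^*$ and transfer it to $y$. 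Concretely, I would write $h\big(d_{S(\xi)}(x^*\, y)\big)$-type expressions, expand the coproduct of $S(\xi)$, and use that $\epsilon$ annihilates all but the counit component.

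The main obstacle I anticipate is bookkeeping with the antipode and the flip in the coproduct: the twisted Leibniz rule in \eqref{eq:calcrule} reverses the order of the Sweedler legs, so I must be careful to apply the $S$-twisted form $d_{S(\eta)}(xy) = d_{S(\eta_{(1)})}(x)\, d_{S(\eta_{(2)})}(y)$ to avoid sign and ordering errors, and then correctly invoke $h \circ d_\eta = \epsilon(\eta)\, h$ on the appropriate composite element. Once the counit collapses the sum, the identities $\epsilon(\eta_{(1)}) \eta_{(2)} = \eta = \eta_{(1)} \epsilon(\eta_{(2)})$ should reduce the surviving term precisely to $h\big( x^* \, d_\xi(y)\big)$, completing the proof. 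Since both sides are linear in $\xi$ and the action, coproduct and antipode are all explicitly available from the Hopf $*$-algebra structure reviewed above, no convergence or density issues arise and the argument is purely algebraic.
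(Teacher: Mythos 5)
Your proposal is correct and follows essentially the same route as the paper's proof: the identity $d_{\xi^*}(x)^* = d_{S(\xi)}(x^*)$ from the involution rule in \eqref{eq:calcrule}, followed by the transfer $h\big(d_{S(\xi)}(x^*)\, y\big) = h\big(x^*\, d_\xi(y)\big)$ via the $S$-twisted Leibniz rule, the Haar invariance \eqref{eq:calcrulehaar}, and the counit/antipode axioms. The paper simply runs the same chain of identities in the opposite direction, starting from $h(x^* d_\xi(y))$ and applying the involution rule last instead of first.
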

\begin{proof}
Applying \eqref{eq:calcrule} and \eqref{eq:calcrulehaar} we compute as follows:
\[
\begin{split}
h(x^* d_\xi(y)) & = \epsilon(\xi_{(1)}) \cd h(x^* d_{\xi_{(2)}}(y)) = h\big( d_{S(\xi_{(1)})}(x^* \cd d_{\xi_{(2)}}(y)) \big) \\
& = h\big( d_{S(\xi_{(1)})}(x^*) \cd d_{S(\xi_{(2)}) \xi_{(3)}}(y) \big)
= h( d_{S(\xi)}(x^*) y) = h(d_{\xi^*}(x)^* y) . \qedhere
\end{split}
\]
\end{proof}

In order to define a $q$-deformed analogue of the antiholomorphic forms over quantum projective space we consider the usual exterior algebra $\La(\cc^\ell) = \op_{k = 0}^\ell \La^k(\cc^\ell)$. We view $\La(\cc^\ell)$ as a finite dimensional Hilbert space with orthonormal basis $\{ e_I\}_{I \su \inn{\ell}}$ indexed by the power set of $\inn{\ell} = \{1,\ldots,\ell\}$. 

The operation $\sharp$ counts the number of elements in a finite (or empty) set. Hence, for $I \su \nn$ and $j \in \nn$ we have that $\sharp ( I \cap \inn{j})$ is equal to the number of elements in $I$ which are less than or equal to $j$. We shall also apply the variant over the Kronecker delta such that $\de_{j,I}$ is equal to $1$ for $j \in I$ and $0$ for $j \notin I$.

For each $j \in \inn{\ell}$ we are interested in the $q$-deformed exterior multiplication operator
$\ep_j^q : \La(\cc^\ell) \to \La(\cc^\ell)$ determined by
\[
\ep_j^q(e_I) := \fork{ccc}{0 & \T{for} & j \in I \\ (-q)^{-\sharp (I \cap \inn{j})} e_{I \cup \{ j \} }  & \T{for} & j \notin I} .
\]
It is important to record the commutation rule $\ep_i^q \ep_j^q = -q \ep_j^q \ep_i^q$ for all $i < j$. 

Recall that $\C U_q(\G{su}(\ell)) \su \C U_q(\G{su}(N))$ denotes the unital $*$-subalgebra of $\C U_q(\G{su}(N))$ generated by the elements $K_r,K_r^{-1},E_r$ for $r < \ell$ (for $\ell = 1$ this is just the scalars $\cc 1$). It clearly holds that $\C U_q(\G{su}(\ell))$ is a Hopf $*$-subalgebra of $\C U_q(\G{su}(N))$.

We introduce a unital $*$-homomorphism $\si : \C U_q(\G{su}(\ell)) \to \B B\big( \La(\cc^\ell) \big)$ in the following fashion: For $r < \ell$ and $I \su \inn{\ell}$ define
\[
\begin{split}
\si(E_r)(e_I) & := \fork{ccc}{ e_{ (I \sem \{r+1\}) \cup \{r\}} & \T{for} & r + 1 \in I \T{ and } r \notin I \\ 
0 & & \T{else} } \quad \T{and} \\
\si(K_r)(e_I) & := q^{\frac{1}{2} (\de_{r,I} - \de_{r + 1, I})} e_I . 
  \end{split}
\]
It is also convenient to record the formula
\[
\si(F_r)(e_I) = \fork{ccc}{ e_{ (I \sem \{r\}) \cup \{r+1\}} & \T{for} & r \in I \T{ and } r + 1 \notin I \\ 
0 & & \T{else} } .
\]
We emphasize that $\si$ restricts to a unital $*$-homomorphism $\si_k : \C U_q(\G{su}(\ell)) \to \B B\big( \La^k(\cc^\ell) \big)$ for all $k \in \{0,1,\ldots,\ell\}$ and that both $\si_0$ and $\si_\ell$ agree with the counit $\epsilon$ (upon identifying $\B B(\cc)$ with $\cc$).
%

Let us clarify the compatibility between the representation $\si$ and the $q$-deformed exterior multiplication operators as investigated in \cite[Proposition 3.9]{DaDa:DQP}. For each vector $v  \in \cc^{\ell} = \La^1(\cc^\ell)$ we define
\[
\ep_v^q := \sum_{j = 1}^\ell \inn{e_j,v} \cd \ep^q_j .
\]

\begin{lemma}\label{l:extsig}
It holds that
\[
\si(\eta) \cd \ep^q_v = \ep^q_{\si(\eta_{(1)})(v)} \cd \si(\eta_{(2)})
\quad \mbox{for all } \eta \in \C U_q(\G{su}(\ell)) \mbox{ and } v \in \cc^\ell .
\]
\end{lemma}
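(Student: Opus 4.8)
The plan is to use that both sides of the asserted identity are linear in $\eta$ and in $v$, combined with the multiplicativity of the coproduct $\De$ and of $\si$. Since $\{e_s\}_{s=1}^\ell$ is a basis for $\cc^\ell$ and since $\C U_q(\G{su}(\ell))$ is generated as an algebra by the elements $K_r$, $K_r^{-1}$, $E_r$ and $F_r = E_r^*$ with $r < \ell$, it suffices to verify the identity in the special cases where $v = e_s$ and $\eta$ is one of these generators. (The conceptual content is that $v \mapsto \ep^q_v$ is equivariant for the $\si$-action on $\cc^\ell$ and the associated adjoint action on $\B B(\La(\cc^\ell))$, but this observation does not reduce the computational work.)

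First I would record that the identity is closed under products. If it holds for $\eta$ and $\eta'$ (and every vector), then using that $\si$ is an algebra homomorphism we get
\[
\si(\eta \eta') \ep^q_v = \si(\eta)\, \ep^q_{\si(\eta'_{(1)})(v)}\, \si(\eta'_{(2)}) = \ep^q_{\si(\eta_{(1)} \eta'_{(1)})(v)}\, \si(\eta_{(2)} \eta'_{(2)}) .
\]
Since $\De$ is an algebra homomorphism we have $(\eta\eta')_{(1)} \ot (\eta\eta')_{(2)} = \eta_{(1)} \eta'_{(1)} \ot \eta_{(2)} \eta'_{(2)}$, so the right-hand side is exactly $\ep^q_{\si((\eta\eta')_{(1)})(v)} \si((\eta\eta')_{(2)})$, i.e.\ the identity for $\eta\eta'$. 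This reduces the lemma to the generators above. Moreover, once the identity is known for $K_r$, it follows for $K_r^{-1}$: conjugating $\si(K_r) \ep^q_v = \ep^q_{\si(K_r)(v)} \si(K_r)$ by $\si(K_r)^{-1}$ and substituting $v \mapsto \si(K_r^{-1})(w)$ yields $\si(K_r^{-1}) \ep^q_w = \ep^q_{\si(K_r^{-1})(w)} \si(K_r^{-1})$, which is the group-like instance for $K_r^{-1}$.

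It then remains to check the base cases, each of which is a commutation relation between $\si(\eta)$ and exterior multiplication that I would verify on the orthonormal basis $\{e_I\}_{I \su \inn{\ell}}$. For $K_r$ the coproduct is group-like and the claim reads
\[
\si(K_r) \ep^q_s = q^{\frac{1}{2}(\de_{rs} - \de_{r+1,s})}\, \ep^q_s\, \si(K_r) ;
\]
evaluating on $e_I$ (the case $s \in I$ being trivial) this comes down to the additivity of the weight $I \mapsto \tfrac12(\de_{r,I} - \de_{r+1,I})$ upon adjoining $s$ to $I$. For $E_r$ the coproduct $\De(E_r) = E_r \ot K_r + K_r^{-1} \ot E_r$ together with $\si(E_r)(e_s) = \de_{s,r+1} e_r$ turns the claim into
\[
\si(E_r) \ep^q_s = \de_{s,r+1}\, \ep^q_r \si(K_r) + q^{-\frac{1}{2}(\de_{rs} - \de_{r+1,s})}\, \ep^q_s \si(E_r) ,
\]
and the entirely analogous two-term identity arises for $F_r$ from $\De(F_r) = F_r \ot K_r + K_r^{-1} \ot F_r$. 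Both are verified by applying each side to $e_I$ and distinguishing the cases according to which of $r$, $r+1$, $s$ belong to $I$, using the commutation rule $\ep^q_i \ep^q_j = -q\, \ep^q_j \ep^q_i$ for $i < j$ to organize the terms.

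The main obstacle will be the bookkeeping in the $E_r$ and $F_r$ cases. Each operator $\ep^q_j$ carries the factor $(-q)^{-\sharp(I \cap \inn{j})}$, and both $\si(E_r)$ (which replaces the index $r+1$ by $r$) and the ensuing exterior multiplication shift the cardinalities $\sharp(I \cap \inn{j})$; one must confirm that the two coproduct terms recombine with precisely matching powers of $q$. For instance, when $s = r+1 \in I$ and $r \notin I$ both sides must vanish, which forces an exact cancellation between $\ep^q_r \si(K_r) e_I$ and $q^{1/2}\, \ep^q_{r+1} \si(E_r) e_I$; this hinges on the cardinality identity
\[
\sharp\big( ((I \sem \{r+1\}) \cup \{r\}) \cap \inn{r+1} \big) = \sharp(I \cap \inn{r}) + 1 .
\]
Tracking these shifts across all subcases is the only genuinely delicate part of the argument, everything else being linearity and the multiplicativity of $\De$ and $\si$.
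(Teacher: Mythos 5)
Your proof is correct: the reduction to the algebra generators $K_r^{\pm 1}$, $E_r$, $F_r$ via multiplicativity of $\Delta$ and $\sigma$ is valid, and your base-case commutation formulas (including the crucial cancellation for $s = r+1 \in I$, $r \notin I$ via the stated cardinality identity) are consistent with the relations the paper itself later extracts from this lemma in the proof of Proposition \ref{p:welldef}. Note that the paper gives no proof of this lemma at all --- it is imported from \cite[Proposition 3.9]{DaDa:DQP} --- so your direct verification is essentially the expected argument and fills in what the paper leaves to the reference.
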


Let us from now on fix an integer $M \in \zz$ which is going to index a twist of the antiholomorphic forms by a finitely generated projective module over the coordinate algebra $\C O( \B C P_q^\ell)$. Notice that the incorporation of this twist is necessary in order to recover the D\k{a}browski-Sitarz spectral triple over the Podle\'s sphere for $\ell = 1$, see Subsection \ref{ss:dabsit}. 

For each $I \su \inn{\ell}$ we define the subspace 
\[
\C O(SU_q(N))^I_M := \big\{ x \in \C O(SU_q(N)) \mid d_{K_\ell}(x) = q^{\frac{M - \sharp I}{2} - \frac{1}{2} \de_{\ell,I}} x \big\} 
\]
of the coordinate algebra $\C O(SU_q(N))$. For each $k \in \{0,1,\ldots, \ell \}$ we then introduce the subspace
\[
\Ga_M^k := \T{span}_{\cc}\big\{ x \ot e_I \mid I \su \inn{\ell} \, , \, \, \sharp I = k \, , \, \, x \in \C O(SU_q(N))^I_M \big\} 
\]
of $\C O(SU_q(N)) \ot \La^k(\cc^\ell)$. 

\begin{dfn}\label{d:antihol}
 Let $k \in \{0,1,\ldots,\ell\}$ and $M \in \zz$. The \emph{twisted antiholomorphic forms} of degree $k$ are defined as the subspace
  \[
\Om_M^k := \big\{ \om \in \Ga_M^k \mid d_{\eta_{(1)}} \ot \si(\eta_{(2)}) \om = \epsilon(\eta) \cd \om \, \, \T{for all } \eta \in \C U_q(\G{su}(\ell)) \big\} 
\]
of the algebraic tensor product $\C O(SU_q(N)) \ot \La^k(\cc^\ell)$.
\end{dfn}

Let us specify that $\Om^k_M$ carries a left action of the coordinate algebra $\C O(\B CP_q^\ell)$ which is inherited from the algebra structure of $\C O(SU_q(N))$. Indeed, because of \eqref{eq:homsph} and \eqref{eq:hompro} for every $\om \in \Om^k_M$ and $x \in \C O(\B CP_q^\ell)$ it holds that $(x \ot 1) \cd \om \in \Om^k_M$. We shall often view $\Om^k_M$ as a subspace of the Hilbert space tensor product $L^2( SU_q(N)) \ot \La^k(\cc^\ell)$ via the inclusion
\[
\io \ot 1 : \C O(SU_q(N)) \ot \La^k(\cc^\ell) \to L^2(SU_q(N)) \ot \La^k(\cc^\ell)  
\]
and the corresponding Hilbert space completion of $\Om^k_M$ is denoted by $L^2(\Om^k_M,h)$. The left action of $\C O(\B CP_q^\ell)$ on $\Om^k_M$ then induces a unital $*$-homomorphism $\rho^k : C(\B CP_q^\ell) \to \B B\big( L^2(\Om^k_M,h)\big)$. Putting $\Om_M := \op_{k = 0}^\ell \Om^k_M$ and $L^2(\Om_M,h) := \op_{k = 0}^\ell L^2(\Om^k_M,h)$, we obtain an injective unital $*$-homomorphism
\begin{equation}\label{eq:repqua}
\rho : C(\B CP_q^\ell) \to \B B\big( L^2(\Om_M,h) \big) 
\end{equation}
by taking the direct sum of the representations $\rho^k$ on $L^2(\Om^k_M,h)$. Notice that the injectivity of $\rho$ for $M = 0$ relies on the faithfulness of the Haar state (restricted to the quantum projective space $C(\B CP_q^\ell)$). For $M \neq 0$ one also needs the sphere relations $\sum_{i = 1}^N z_i z_i^* = 1 = \sum_{i = 1}^N q^{2(N-i)} z_i^* z_i$ together with the extra observation that $z_j \in \Om_{-1}^0$ and $z_j^* \in \Om_1^0$ for all $j \in \inn{N}$. 

The Hilbert space $L^2(\Om_M,h)$ is equipped with the $\zz/2\zz$-grading with grading operator $\ga : L^2(\Om_M,h) \to L^2(\Om_M,h)$ defined as the restriction of the selfadjoint unitary $\ga \in \B B\big(  L^2(SU_q(N)) \ot \La(\cc^\ell) \big)$ determined by
  \[
  \ga( \xi \ot e_I) := (-1)^{\sharp I} \xi \ot e_I .
  \]
 
  For each $i \in \inn{\ell}$ we introduce the element $M_i \in \C U_q(\G{su}(N))$ by putting $M_\ell := E_{\ell}$ and then recursively define
  \begin{equation}\label{eq:recurs}
M_i := E_i M_{i + 1} - q^{-1} M_{i+1} E_i \quad \T{for } i \in \inn{\ell - 1} .
\end{equation}
We shall also need the element $N_i \in \C U_q(\G{su}(N))$ given by $N_i := K_i K_{i + 1} \clc K_\ell$. With these definitions ready we introduce the linear maps
\[
\ov \pa := \sum_{i = 1}^\ell d_{N_i M_i^*} \ot \ep^q_i \quad \T{and} \quad \ov \pa^\da := \sum_{i = 1}^\ell d_{M_i N_i} \ot (\ep^q_i)^*
\]
which a priori act on the algebraic tensor product $\C O(SU_q(N)) \ot \La(\cc^\ell)$. The next result, which can be found as \cite[Proposition 5.6]{DaDa:DQP}, guarantees that the above linear maps preserve the subspace $\Om_M \su \C O(SU_q(N)) \ot \La(\cc^\ell)$. We give a few details on the proof.

\begin{prop}\label{p:welldef}
  Let $M \in \zz$. The expressions $\ov{\pa}$ and $\ov{\pa}^\da$ determine linear maps on $\Om_M$. It moreover holds that $\ov{\pa}^2 = 0 = (\ov{\pa}^\da)^2$ and
  \[
\inn{ \ov{\pa} \om, \xi} = \inn{ \om, \ov{\pa}^\da \xi} \quad \mbox{for all } \om,\xi \in \Om_M .
  \]
\end{prop}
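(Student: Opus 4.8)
The plan is to verify the three assertions separately, establishing the adjointness relation first because it also delivers the nilpotency of $\ov{\pa}^\da$ for free.

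\textbf{Adjointness.} The cleanest claim, which I would treat first, follows from Lemma~\ref{l:ip} applied termwise. Since each $K_r$ is self-adjoint and the factors of $N_i = K_i \clc K_\ell$ commute, we have $N_i^* = N_i$, and hence $(N_i M_i^*)^* = M_i N_i^* = M_i N_i$. On $L^2(SU_q(N)) \ot \La(\cc^\ell)$ the inner product factorises, and $(\ep_i^q)^*$ denotes the Hilbert space adjoint of $\ep_i^q$ on $\La(\cc^\ell)$. Thus for elementary tensors Lemma~\ref{l:ip} gives
\[
\inn{(d_{N_i M_i^*} \ot \ep_i^q)(a \ot u), b \ot v} = \inn{a, d_{M_i N_i}(b)} \cd \inn{u, (\ep_i^q)^* v} = \inn{a \ot u, (d_{M_i N_i} \ot (\ep_i^q)^*)(b \ot v)} .
\]
Summing over $i$ and extending linearly yields $\inn{\ov{\pa} \om, \xi} = \inn{\om, \ov{\pa}^\da \xi}$ for all $\om, \xi$ in the algebraic tensor product, in particular on $\Om_M$.

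\textbf{Well-definedness.} Next I would show that $\ov{\pa}$ and $\ov{\pa}^\da$ preserve $\Om_M$ in two steps. First a weight count: as $d$ is an algebra homomorphism, the $K_\ell$-degree of $d_{N_i M_i^*}$ is governed by the commutation of $K_\ell$ with $N_i M_i^*$ inside $\C U_q(\G{su}(N))$, and one checks that this shift exactly matches the change in $\sharp I$ produced by $\ep_i^q$, so that $\ov{\pa}(\Ga_M^k) \su \Ga_M^{k+1}$ and $\ov{\pa}^\da(\Ga_M^k) \su \Ga_M^{k-1}$. Second, writing $\Te_\eta := d_{\eta_{(1)}} \ot \si(\eta_{(2)})$ for the diagonal action defining $\Om_M$, I would verify the equivariance $\Te_\eta \ov{\pa} = \ov{\pa} \Te_\eta$ for $\eta \in \C U_q(\G{su}(\ell))$. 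Expanding the left-hand side, collapsing $d_{\eta_{(1)}} d_{N_i M_i^*} = d_{\eta_{(1)} N_i M_i^*}$, and commuting $\si(\eta_{(2)})$ past $\ep_i^q$ via Lemma~\ref{l:extsig}, the identity reduces to a covariance property of the family $\{N_i M_i^*\}$ under the adjoint action of $\C U_q(\G{su}(\ell))$ on $\C U_q(\G{su}(N))$, matching the transformation of $\{e_i\}$ under $\si$ on $\cc^\ell$ (and likewise for $\{M_i N_i\}$ and $\ov{\pa}^\da$). Granting equivariance, an invariant $\om$ satisfies $\Te_\eta \ov{\pa} \om = \ov{\pa} \Te_\eta \om = \epsilon(\eta) \ov{\pa} \om$, so $\ov{\pa} \om \in \Om_M$, and similarly $\ov{\pa}^\da \om \in \Om_M$.

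\textbf{Nilpotency.} For $\ov{\pa}^2$ I would expand, again using that $d$ is a homomorphism,
\[
\ov{\pa}^2 = \sum_{i,j = 1}^\ell d_{N_i M_i^* N_j M_j^*} \ot \ep_i^q \ep_j^q .
\]
The diagonal terms vanish since $(\ep_i^q)^2 = 0$, while the rule $\ep_i^q \ep_j^q = -q \cd \ep_j^q \ep_i^q$ for $i < j$ pairs the $(i,j)$ and $(j,i)$ contributions into $\big( d_{N_i M_i^* N_j M_j^*} - q^{-1} d_{N_j M_j^* N_i M_i^*} \big) \ot \ep_i^q \ep_j^q$. Hence $\ov{\pa}^2 = 0$ follows once the $q$-commutation relation $N_i M_i^* N_j M_j^* = q^{-1} N_j M_j^* N_i M_i^*$ (as seen through $d$) is established for $i < j$, which is a consequence of the recursion \eqref{eq:recurs} and the defining relations of $\C U_q(\G{su}(N))$. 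Finally, $(\ov{\pa}^\da)^2 = 0$ drops out of the adjointness relation: for $\om, \xi \in \Om_M$ one has $\inn{\om, (\ov{\pa}^\da)^2 \xi} = \inn{\ov{\pa}^2 \om, \xi} = 0$, and since $(\ov{\pa}^\da)^2 \xi \in \Om_M$ by well-definedness and the inner product is non-degenerate, we conclude $(\ov{\pa}^\da)^2 \xi = 0$.

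\textbf{Main obstacle.} The Hopf-algebraic bookkeeping above is routine; the genuine difficulty lies in the two representation-theoretic inputs concerning the quantum root vectors $M_i$, namely their covariance under the adjoint $\C U_q(\G{su}(\ell))$-action (for well-definedness) and their mutual $q$-commutation relations inside $\C U_q(\G{su}(N))$ (for $\ov{\pa}^2 = 0$). I expect both to be proved by induction on $i$ through \eqref{eq:recurs}, using the quantum Serre relations together with the commutation rules among the $E_r$, $F_r$ and $K_r$. This is where the real work sits.
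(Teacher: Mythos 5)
Your proposal is correct, and on the one claim the paper actually proves in detail --- that $\ov{\pa}$ preserves $\Om_M$ --- it follows the same two-step route: a weight count giving $\ov{\pa}(\Ga_M^k) \su \Ga_M^{k+1}$ (the paper uses $K_\ell Y_i = Y_i K_\ell \, q^{-\frac12 - \frac12 \de_{i\ell}}$ with $Y_i := N_i M_i^*$), followed by the vanishing of the commutators of $\ov{\pa}$ with $d_{\eta_{(1)}} \ot \si(\eta_{(2)})$ for $\eta$ running through the generators $K_r, E_r, F_r$, $r \in \inn{\ell-1}$. The difference lies in what is carried out versus cited. The paper verifies only the $F_r$-commutator, quoting the needed relations ($F_r$ commutes with $Y_i$ for $i \neq r, r+1$, while $F_r Y_r = q^{-1/2} Y_r F_r$ and $F_r Y_{r+1} = q^{1/2}(Y_{r+1} F_r - K_r^{-1} Y_r)$) from \cite[Lemma 3.2]{DaDa:DQP}, and it refers to \cite[Proposition 5.6]{DaDa:DQP} for nilpotency and the symmetry relation; you instead defer these same inputs, together with the $q$-commutation relations $Y_i Y_j = q^{-1} Y_j Y_i$ for $i < j$ underlying $\ov{\pa}^2 = 0$, to an induction on the recursion \eqref{eq:recurs} via the quantum Serre relations --- which is indeed how they are established, so your diagnosis of where the real work sits is accurate. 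One caution: the covariance of the family $\{Y_i\}$ is twisted (witness the $K_r^{-1} Y_r$ term above), so the $Y_i$ do not intertwine the generators naively; your formulation in terms of $\Te_\eta = d_{\eta_{(1)}} \ot \si(\eta_{(2)})$ is the right one precisely because it absorbs this twist through the coproduct. Your two additions relative to the paper's text are both correct and buy a self-contained treatment of the claims the paper outsources: the termwise adjointness via Lemma \ref{l:ip} (legitimate since $N_i^* = N_i$, the $K_j$ being commuting and self-adjoint) is exactly the intended use of that lemma, and deducing $(\ov{\pa}^\da)^2 = 0$ from adjointness, the invariance of $\Om_M$ under $\ov{\pa}^\da$, and positive-definiteness of the inner product (faithfulness of the Haar state) is a clean shortcut that avoids a second commutation computation.
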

\begin{proof}
 We focus on showing that the endomorphism $\ov{\pa} : \C O(SU_q(N)) \ot \La(\cc^\ell) \to \C O(SU_q(N)) \ot \La(\cc^\ell)$ preserves the subspace $\Om_M$. Let first $k \in \{0,1,\ldots,\ell - 1 \}$ and let us show that $\ov{\pa}(\Ga_M^k) \su \Ga_M^{k+1}$. For each $i \in \inn{\ell}$ we define $Y_i := N_i M_i^* \in \C U_q(\G{su}(N))$ and record that $K_\ell Y_i = Y_i K_\ell q^{-\frac{1}{2} -\frac{1}{2} \de_{i,\ell}}$. It is then not difficult to see that this identity entails the inclusion $(d_{Y_i} \ot \ep_i^q)(\Ga_M^k) \su \Ga_M^{k+1}$ and hence that $\ov{\pa}(\Ga_M^k) \su \Ga_M^{k+1}$. In order to verify that $\ov{\pa}(\Om^k_M) \su \Om^{k+1}_M$ it now suffices to fix an $r \in \inn{\ell - 1}$ and show that the three commutators
  \begin{equation}\label{eq:preome}
    \begin{split}
&  [d_{K_r} \ot \si(K_r), \ov \pa] \, \, , \, \, \, [d_{E_r} \ot \si(K_r) + d_{K_r^{-1}} \ot \si(E_r), \ov \pa] \quad \T{and} \\
      &   [d_{F_r} \ot \si(K_r) + d_{K_r^{-1}} \ot \si(F_r), \ov \pa]
    \end{split}
  \end{equation}
  vanish. We present the relevant details for the third commutator.

  An application of \cite[Lemma 3.2]{DaDa:DQP} shows that $F_r$ commutes with $Y_i$ whenever $i \neq r,r+1$ and for the remaining cases we have the identities
  \[
F_r Y_r = q^{-1/2} Y_r F_r \quad \T{and} \quad F_r Y_{r + 1} = q^{1/2} ( Y_{r + 1} F_r - K_r^{-1} Y_r ) .  
\]
Similarly, we see from Lemma \ref{l:extsig} that $\si(F_r)$ commutes with $\ep_i^q$ whenever $i \neq r,r+1$ and the remaining cases satisfy the identities
\[
\si(F_r) \ep_r^q = \ep_{r + 1}^q \si(K_r) + q^{-1/2} \ep_r^q \si(F_r) \quad \T{and} \quad \si(F_r) \ep_{r + 1}^q = q^{1/2} \ep_{r+1}^q \si(F_r) .
\]
We also record that $K_r$ commutes with $Y_i$ whenever $i \neq r,r+1$ and that
\[
K_r Y_r = Y_r K_r q^{-1/2} \quad \T{and} \quad K_r Y_{r + 1} = Y_{r + 1} K_r q^{1/2} .
\]
Another application of Lemma \ref{l:extsig} moreover entails that 
\[
\si(K_r) \ep_r^q = q^{1/2} \ep_r^q \si(K_r) \quad \T{and} \quad \si(K_r) \ep_{r+1}^q = q^{-1/2} \ep_{r+1}^q \si(K_r)
\]
whereas $\si(K_r)$ commutes with the remaining $q$-deformed exterior multiplication operators. Combining these formulae, we get that the third commutator in \eqref{eq:preome} is equal to zero. 
\end{proof}

The above proposition allows us to define the symmetric unbounded operator
\[
\C D_q := \ov \pa + \ov{\pa}^\da : \Om_M \to L^2(\Om_M,h) .
\]
The closure is denoted by $D_q := \T{cl}(\C D_q)$. It is not hard to see that $\C D_q$ anticommutes with the grading operator $\ga$, entailing that $D_q$ is odd. The following theorem is part of \cite[Theorem 6.2]{DaDa:DQP} but D'Andrea and D\k{a}browski also treat equivariance, reality and determine the spectral dimension. We present some further discussion on the equivariance condition in the next subsection.  

\begin{theorem}\label{t:dada}
It holds that $(\C O(\B C P_q^\ell),L^2(\Om_M,h),D_q)$ is an even unital spectral triple. 
\end{theorem}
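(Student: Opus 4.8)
The plan is to verify the four conditions defining an even unital spectral triple: that $\C O(\B CP_q^\ell)$ acts by a $*$-representation, that $D_q$ is selfadjoint with compact resolvent, that the commutators $[D_q,\rho(a)]$ are bounded for every $a \in \C O(\B CP_q^\ell)$, and that the grading $\ga$ is compatible with the remaining data. The representation is already in hand, namely the injective unital $*$-homomorphism $\rho$ from \eqref{eq:repqua}. The evenness is immediate: $\rho(a)$ acts only on the first tensor leg and hence commutes with $\ga$, while $\ov\pa$ raises and $\ov\pa^\da$ lowers the form degree by one, so that $\C D_q = \ov\pa + \ov\pa^\da$ anticommutes with $\ga$ and therefore so does its closure $D_q$.

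For selfadjointness I would exploit the Peter-Weyl decomposition recalled in Subsection \ref{ss:weylschur}. Since the operators $d_\eta$ act on the left (row) index of the matrix coefficients $v_{ij}^\al$, the finite-dimensional subspace $\T{span}\{ v_{ij}^\al \mid i \in \inn{n_\al}\}$ is invariant under every $d_\eta$ for each class $\al \in \widehat{\C O(SU_q(N))}$ and each column index $j$. Tensoring with $\La^k(\cc^\ell)$ and intersecting with $\Om_M$ yields a family of finite-dimensional subspaces of $L^2(\Om_M,h)$ that are mutually orthogonal by the Schur orthogonality relations, that span a dense subspace, and that are each preserved by $\C D_q$, since this operator is assembled from the $d_\eta$ together with the finite-dimensional operators $\ep_i^q$ and $(\ep_i^q)^*$ on $\La(\cc^\ell)$. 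On each such finite-dimensional invariant subspace the symmetric operator $\C D_q$ is automatically selfadjoint, so $D_q$ is the orthogonal direct sum of these finite blocks and is selfadjoint. The compact resolvent then reduces to the statement that, for every $C > 0$, only finitely many blocks carry an eigenvalue of modulus at most $C$; this growth of eigenvalues across the blocks I would import from the explicit spectral computation of D'Andrea and D\k{a}browski in \cite{DaDa:DQP}.

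The boundedness of the commutators is where the homogeneity of $\C O(\B CP_q^\ell)$ is decisive. The key observation is that, because of the invariance built into Definition \ref{d:antihol} together with the weight condition defining $\Om_M$, every derivation $d_{K_r}$, $d_{E_r}$, $d_{F_r}$ with $r < \ell$ acts on $\Om_M$ as the bounded operator $1 \ot \si(K_r)^{-1}$, $-1 \ot \si(E_r)$, $-1 \ot \si(F_r)$ respectively, while $d_{K_\ell}^{\pm 1}$ acts on each homogeneous piece $x \ot e_I$ by the single scalar $q^{\pm (M - \sharp I - \de_{\ell,I})/2}$ and is therefore bounded as well. Consequently the only genuinely unbounded directions entering $\ov\pa$ and $\ov\pa^\da$ through the elements $N_i M_i^*$ and $M_i N_i$ of \eqref{eq:recurs} are $d_{E_\ell}$ and $d_{F_\ell}$. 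For $a \in \C O(\B CP_q^\ell)$ the relations \eqref{eq:homsph} and \eqref{eq:hompro} give $d_{E_r}(a) = d_{F_r}(a) = 0$ for $r < \ell$ and $d_{K_i}(a) = a$ for all $i \in \inn{\ell}$, so that in the commutators $[\ov\pa,\rho(a)]$ and $[\ov\pa^\da,\rho(a)]$ the surviving $E_\ell$- or $F_\ell$-derivation differentiates $a$ and produces left multiplication by the fixed elements $d_{E_\ell}(a), d_{F_\ell}(a) \in \C O(SU_q(N))$. Expanding these commutators by means of the twisted Leibniz rules \eqref{eq:twileib} and the calculus \eqref{eq:calcrule} -- the requisite bookkeeping with the coproducts of $N_i M_i^*$ and $M_i N_i$ being carried out in \cite{DaDa:DQP} -- one finds that every surviving term combines the bounded operators listed above with such a bounded left multiplication, so that $[D_q,\rho(a)]$ extends to a bounded operator on $L^2(\Om_M,h)$.

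I expect the main obstacle to be the compact resolvent, that is, the control of the growth of the eigenvalues of $D_q$ across the infinitely many Peter-Weyl blocks, which genuinely relies on the representation-theoretic spectral analysis of \cite{DaDa:DQP}. By contrast, selfadjointness and evenness are formal consequences of the block structure, and the boundedness of the commutators, although it requires bookkeeping with the coproducts of $N_i M_i^*$ and $M_i N_i$, follows cleanly once one records that the $\C U_q(\G{su}(\ell))$-invariance and the $K_\ell$-weight condition confine all unboundedness to the single pair of directions $E_\ell, F_\ell$.
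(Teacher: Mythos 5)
The paper itself contains no proof of this theorem: it is quoted as part of \cite[Theorem 6.2]{DaDa:DQP}, and the only ingredients verified in the text are Proposition \ref{p:welldef} (that $\ov{\pa}$ and $\ov{\pa}^{\da}$ preserve $\Om_M$, square to zero and are formally adjoint to one another) together with the remark that $\C D_q$ anticommutes with $\ga$. Your proposal therefore does strictly more than the paper, and its skeleton is sound. The Peter--Weyl argument for essential selfadjointness works: the conditions cutting $\Om_M$ out of $\C O(SU_q(N)) \ot \La(\cc^\ell)$ are given by operators preserving the finite-dimensional column spaces $\T{span}\{ v^{\al}_{ij} \mid i \in \inn{n_\al}\}$, kernels of block-preserving operators split along blocks, and a symmetric operator which is an orthogonal direct sum of Hermitian matrices on a dense invariant domain has vanishing deficiency indices. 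Likewise, your use of the invariance condition in Definition \ref{d:antihol} to confine all unboundedness to the directions $E_\ell$, $F_\ell$ is exactly the mechanism behind Lemma \ref{l:partial}. In the end both arguments rest on \cite{DaDa:DQP} --- the paper for everything, you for the eigenvalue growth giving the compact resolvent and for the Leibniz bookkeeping --- so the two routes differ only in how much is reconstructed rather than cited. Three corrections to the parts you do spell out. First, on $\Om_M$ the implementing operators carry $K_r$-twists: one finds $d_{E_r} \ot 1 = -\big(1 \ot \si(K_r)^{-1}\si(E_r)\si(K_r)\big) = -q^{-1}\big(1 \ot \si(E_r)\big)$, so your formulas are off by harmless scalars. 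Second, the terms surviving in the commutators are left multiplications by the iterated derivatives $d_{F_i \clc F_\ell}(a)$ and $d_{E_i \clc E_\ell}(a)$ for \emph{all} $i \in \inn{\ell}$ (this is precisely what Lemma \ref{l:partial} records), not only by $d_{E_\ell}(a)$ and $d_{F_\ell}(a)$; each is still a fixed element of $\C O(SU_q(N))$ acting by bounded left multiplication, so your conclusion stands. Third, the Schur orthogonality quoted in Subsection \ref{ss:weylschur} only separates distinct classes $\al$, so you should either group your blocks into the spaces $\C C(\al) \ot \La^k(\cc^\ell)$ intersected with $\Om_M$ (still finite dimensional and invariant) or invoke the refined orthogonality relations carrying a $\de_{jl}$ factor.
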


We apply the notation $\Lip_{D_q}(\B C P_q^\ell) \su C( \B C P_q^\ell)$ for the Lipschitz algebra associated with the above unital spectral triple and the corresponding closed derivation is denoted by $d_{\max} : \Lip_{D_q}(\B C P_q^\ell) \to \B B\big( L^2(\Om_M,h) \big)$. As discussed in Section \ref{introCQMS}, the unital spectral triple from Theorem \ref{t:dada} induces two slip-norms
\[
L_{D_q} : \C O(\B C P_q^\ell)\to [0,\infty) \, \, \T{ and } \, \, \, L_{D_q}^{\max} : \Lip_{D_q}(\B C P_q^\ell) \to [0,\infty)  
    \]
which agree on $\C O(\B C P_q^\ell)$. Moreover, we have the expression $L_{D_q}^{\max}(x) := \| d_{\max}(x) \|$ for all $x \in \Lip_{D_q}(\B C P_q^\ell)$. We are in this text almost exclusively focusing on the slip-norm $L_{D_q}^{\max}$.

\subsection{Commutators and equivariance}
In this subsection we continue our review of the noncommutative geometry of quantum projective spaces. More precisely, we discuss the equivariance properties and give an explicit computation of the closed derivation $d_{\max}$ in the case where the input belongs to the coordinate algebra $\C O(\B C P_q^\ell)$. We recall that $M \in \zz$ is a fixed integer.

Let us first consider the restriction of $d_{\max}$ to the coordinate algebra: 


\begin{lemma}\label{l:partial}
  Let $x\in \C O(\B CP_q^\ell)$. The commutators $[\ov{\pa},x \ot 1]$ and $[\ov{\pa}^\da,x \ot 1]$ (initially defined on $\Om_M$) extend to bounded operators on $L^2(\Om_M,h)$ given respectively by the two expressions
  \[
  q^{-1} \sum_{i = 1}^\ell (-q)^{i - \ell} d_{ F_i  \clc F_\ell}(x) \ot \ep^q_i \, \, \mbox{ and } \, \, \,
    \sum_{i = 1}^\ell d_{ E_i  \clc E_\ell}(x) \ot (\ep^q_i)^* .
    \]
\end{lemma}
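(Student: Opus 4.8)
The plan is to compute both commutators on the dense subspace $\Om_M$, show that each is implemented by left multiplication operators whose coefficients lie in $\C O(SU_q(N))$, and deduce boundedness from this. I would treat $[\ov\pa, x\ot 1]$ directly and obtain $[\ov\pa^\da, x\ot 1]$ by adjunction, avoiding a second induction.

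Since $\ep^q_i$ acts on the finite dimensional factor $\La(\cc^\ell)$ while $x\ot 1$ acts on the first factor, one has $[\ov\pa, x\ot 1] = \sum_{i=1}^\ell [d_{Y_i},x]\ot \ep^q_i$, where $Y_i := N_i M_i^*$ and $[d_{Y_i},x]$ denotes the first-factor commutator $y \mapsto d_{Y_i}(xy) - x\, d_{Y_i}(y)$. The heart of the matter is the identity
\[
d_{Y_i}(xy) - x\, d_{Y_i}(y) = q^{-1}(-q)^{i-\ell}\, d_{F_i \clc F_\ell}(x)\, y \qquad \big(x \in \C O(\B CP_q^\ell),\ y \in \C O(SU_q(N))\big),
\]
which I would prove by downward induction on $i$ from $\ell$ to $1$. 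The base case $i = \ell$ is immediate from $\De(K_\ell F_\ell) = K_\ell F_\ell \ot K_\ell^2 + 1 \ot K_\ell F_\ell$, the calculus rule \eqref{eq:calcrule}, $d_{K_\ell}(x) = x$ (whence $d_{K_\ell^2}(x) = x$), and $d_{K_\ell} d_{F_\ell}(x) = q^{-1} d_{F_\ell}(x)$, the last coming from $K_\ell F_\ell = q^{-1} F_\ell K_\ell$ together with $d_{K_\ell}(x) = x$; this reproduces the constant $q^{-1}(-q)^0$.

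For the inductive step ($i < \ell$) I would first record the element identity $Y_i = K_i Y_{i+1} F_i - q^{-1/2} K_i F_i Y_{i+1}$, which follows from the recursion $M_i = E_i M_{i+1} - q^{-1} M_{i+1} E_i$ and the commutation $N_{i+1} F_i = q^{1/2} F_i N_{i+1}$ (a consequence of $K_{i+1} F_i = q^{1/2} F_i K_{i+1}$ and $K_r F_i = F_i K_r$ for $r \geq i+2$). As $\eta \mapsto d_\eta$ is a left action this yields the operator identity $d_{Y_i} = d_{K_i} d_{Y_{i+1}} d_{F_i} - q^{-1/2} d_{K_i} d_{F_i} d_{Y_{i+1}}$. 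Expanding $d_{Y_i}(xy)$ through this identity and the twisted Leibniz rules \eqref{eq:twileib}, the decisive simplifications are supplied by \eqref{eq:homsph} and \eqref{eq:hompro}: for $i < \ell$ and $x \in \C O(\B CP_q^\ell)$ one has $d_{F_i}(x) = d_{E_i}(x) = 0$ and $d_{K_i}(x) = x$. Feeding in the inductive hypothesis for $Y_{i+1}$ together with the two weight identities $d_{K_i}\big(d_{F_{i+1}\clc F_\ell}(x)\big) = q^{1/2} d_{F_{i+1}\clc F_\ell}(x)$ and $d_{K_i}\big(d_{F_i\clc F_\ell}(x)\big) = q^{-1/2} d_{F_i\clc F_\ell}(x)$ (each a short commutation with $K_i$), the two terms proportional to $d_{K_i} d_{F_i}(y)$ cancel, the surviving $x$-multiples reassemble into $x\, d_{Y_i}(y)$, and the remaining contribution is $-q^{-1}\cdot q^{-1}(-q)^{i+1-\ell} d_{F_i \clc F_\ell}(x)\, y$. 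Since $-q^{-1}\cdot q^{-1}(-q)^{i+1-\ell} = q^{-1}(-q)^{i-\ell}$, this closes the induction. I expect this bookkeeping — the cancellation of the middle terms and the tracking of the half-integer powers of $q$ — to be the main technical obstacle.

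Boundedness is then automatic: each coefficient $d_{F_i\clc F_\ell}(x)$ lies in $\C O(SU_q(N)) \su C(SU_q(N))$, so left multiplication by it is bounded on $L^2(SU_q(N))$ with norm at most its $C^*$-norm, while $\ep^q_i$ is a fixed operator on $\La(\cc^\ell)$; hence $\sum_{i=1}^\ell d_{F_i\clc F_\ell}(x) \ot \ep^q_i$ is bounded on $L^2(SU_q(N)) \ot \La(\cc^\ell)$ and its restriction to the closed subspace $L^2(\Om_M,h)$ is the asserted bounded extension of $[\ov\pa, x \ot 1]$. Finally, for $\ov\pa^\da$ I would use that $\ov\pa^\da$ is the formal adjoint of $\ov\pa$ on $\Om_M$ (Proposition \ref{p:welldef}), giving $[\ov\pa^\da, x\ot 1] = -[\ov\pa, x^*\ot 1]^*$. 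Taking the Hilbert space adjoint of the formula just established (with $x$ replaced by $x^*$) and using the involution rule $d_\eta(x^*) = d_{S(\eta^*)}(x)^*$ with $S(E_r) = -q E_r$ to rewrite $\big(d_{F_i\clc F_\ell}(x^*)\big)^* = (-q)^{\ell-i+1} d_{E_i\clc E_\ell}(x)$, the scalar prefactor becomes $q^{-1}(-q)^{i-\ell}(-q)^{\ell-i+1} = -1$, and the outer sign from $[\ov\pa^\da, x\ot 1] = -[\ov\pa, x^*\ot 1]^*$ turns this into $+1$, yielding exactly $[\ov\pa^\da, x\ot 1] = \sum_{i=1}^\ell d_{E_i\clc E_\ell}(x) \ot (\ep^q_i)^*$, as claimed.
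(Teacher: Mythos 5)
Your proposal is correct, and every step checks out: the element identity $Y_i = K_i Y_{i+1} F_i - q^{-1/2} K_i F_i Y_{i+1}$, the weight identities $d_{K_i}\big(d_{F_{i+1}\clc F_\ell}(x)\big) = q^{1/2}\, d_{F_{i+1}\clc F_\ell}(x)$ and $d_{K_i}\big(d_{F_i\clc F_\ell}(x)\big) = q^{-1/2}\, d_{F_i\clc F_\ell}(x)$, the cancellation of the two terms proportional to $d_{K_i}d_{F_i}(y)$, and the bookkeeping of signs and powers of $q$ in both the induction and the adjoint step all verify. The difference from the paper is one of self-containedness rather than of skeleton: the paper also reduces to identifying $d_{N_i M_i^*}(x)$ and handles $\ov{\pa}^\da$ by the same adjoint trick, but it outsources the crucial operator-level fact --- that $[\ov{\pa}, x \ot 1]$ acts as left multiplication by $\sum_{i=1}^\ell d_{N_i M_i^*}(x) \ot \ep^q_i$ --- to \cite[Lemma 5.7]{DaDa:DQP}, and then only computes the scalar $d_{N_i M_i^*}(x) = q^{-1}(-q)^{i-\ell} d_{F_i \clc F_\ell}(x)$ via the recursion \eqref{eq:recurs} and the vanishing $d_{F_r}(x) = 0$ for $r \in \inn{\ell-1}$. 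Your downward induction on the two-variable Leibniz-type identity $d_{Y_i}(xy) - x\, d_{Y_i}(y) = q^{-1}(-q)^{i-\ell} d_{F_i\clc F_\ell}(x)\, y$ proves the cited lemma and the coefficient computation simultaneously, with boundedness falling out of the explicit left-multiplication form; what this buys is a proof that needs nothing beyond \eqref{eq:calcrule}, \eqref{eq:homsph}, \eqref{eq:hompro}, \eqref{eq:twileib} and Proposition \ref{p:welldef}, at the cost of the half-integer-power bookkeeping that the paper avoids by citation.
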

\begin{proof}
  By \cite[Lemma 5.7]{DaDa:DQP} we have that the commutator $[ \ov{\pa},x \ot 1]$ extends to the bounded operator $\sum_{i = 1}^\ell d_{N_i M_i^*}(x) \ot \ep^q_i$ (which therefore in particular preserves the subspace $\Om_M$). Hence to prove our formula for $[\ov{\pa}, x \ot 1]$ it suffices to show that $d_{N_i M_i^*}(x) = q^{-1} (-q)^{i - \ell} d_{ F_i  \clc F_\ell}(x)$ for all $i \in \inn{\ell}$. Remark now that $N_i M_i^* = q^{-1} M_i^* N_i$ and that $d_{N_i}(x) = x$. We therefore only need to show that $d_{M_i^*}(x) = (-q)^{i - \ell} d_{F_i \clc F_\ell}(x)$. This is straightforward to verify using the recursive definition of $M_i$ from \eqref{eq:recurs} together with the fact that $d_{F_r}(x) = 0$ for all $r \in \inn{\ell - 1}$.

The formula for $[\ov{\pa}^\da, x \ot 1]$ now follows by taking adjoints. Indeed, for $\om \in \Om_M$ we apply \eqref{eq:calcrule} and Proposition \ref{p:welldef} to get that
\[
\begin{split}
& [\ov{\pa}^\da, x \ot 1](\om) = - [\ov{\pa}, x^* \ot 1]^*(\om) = \sum_{i = 1}^\ell (-q)^{i - N} ( d_{ F_i  \clc F_\ell}(x^*)^* \ot (\ep^q_i)^*)(\om) \\
& \quad = \sum_{i = 1}^\ell (-q)^{i - N} ( d_{S( E_\ell \clc E_i)}(x) \ot (\ep^q_i)^*)(\om)
= \sum_{i = 1}^\ell ( d_{E_i \clc E_\ell}(x) \ot (\ep^q_i)^*)(\om) . \qedhere
\end{split}
\]
\end{proof}


Our next objective is to describe the equivariance properties of the unital spectral triple $\big( \Lip_{D_q}(\cc P_q^\ell), L^2(\Om_M,h),D_q\big)$. Since we are here dealing with the Lipschitz algebra $\Lip_{D_q}(\B CP_q^\ell)$ and not just the coordinate algebra $\C O(\B CP_q^\ell)$, the best way of explaining equivariance is to work with the multiplicative unitary which implements the coproduct of quantum $SU(N)$ at the Hilbert space level, see \cite{BaSk:UMD} for more details on multiplicative unitaries. 

Define the endomorphism $W$ of the algebraic tensor product $\C O(SU_q(N)) \ot \C O(SU_q(N))$ by the formula
\[
W(x \ot y) := \De(x)(1 \ot y) .
\]
An application of the Peter-Weyl decomposition and the faithfulness of the Haar state (at the level of the coordinate algebra) shows that $W$ is in fact a linear automorphism of $\C O(SU_q(N)) \ot \C O(SU_q(N))$. The same argument also shows that $W$ induces a unitary operator $W$ on the Hilbert space tensor product $L^2(SU_q(N)) \hot L^2(SU_q(N))$. This unitary operator implements the coproduct on the compact quantum group $C(SU_q(N))$ in so far that
\[
W(z \ot 1)W^* = \De(z) \quad \T{for all } z \in C(SU_q(N)),
\]
where the quantities on both sides are interpreted as bounded operators on $L^2(SU_q(N)) \hot L^2(SU_q(N))$.

In fact, for the purposes of this text it is relevant to consider a more general situation. Let us look at a unital $C^*$-algebra $A$ and assume that $\mu : A \to \B C$ is a fixed faithful state. Let $L^2(A,\mu)$ denote the corresponding Hilbert space coming from the GNS-construction and let $\pi : A \to \B B(L^2(A,\mu))$ denote the associated injective unital $*$-homomorphism. Suppose that $\Phi : C(SU_q(N)) \to A$ is a surjective unital $*$-homomorphism and put $\C A := \Phi\big( \C O(SU_q(N))\big)$ so that $\C A \su A$ is a norm-dense unital $*$-subalgebra. Instead of $W$ we may then consider the endomorphism $W_\Phi$ of $\C O(SU_q(N)) \ot \C A$ defined by the formula
\[
W_\Phi(x \ot y) := (1 \ot \Phi)\De(x) \cd (1 \ot y) .
\]
It can be verified that $W_\Phi$ is a linear automorphism of the algebraic tensor product $\C O(SU_q(N)) \ot \C A$ and that this linear automorphism induces a unitary operator $W_\Phi$ on the Hilbert space tensor product $L^2(SU_q(N)) \hot L^2(A,\mu)$.
%

For each $k \in \{0,1,\ldots,\ell\}$ we define the linear automorphism $W_\Phi^k$ of the algebraic tensor product $\C O(SU_q(N)) \ot \La^k(\cc^\ell) \ot \C A$ by putting
\[
W^k_\Phi(x \ot \om \ot y) := (W_\Phi)_{13}(x \ot \om \ot y) = x_{(1)} \ot \om \ot \Phi(x_{(2)}) y .
\]
It clearly holds that $W^k_\Phi$ induces a unitary operator $W^k_\Phi$ on the Hilbert space tensor product $L^2(SU_q(N)) \hot \La^k(\cc^\ell) \hot L^2(A,\mu)$.

In the statement of the next lemma we apply the notation $D_q \hot 1$ for the closure of the unbounded operator
\[
\C D_q \ot 1 = (\ov \pa + \ov \pa^\da) \ot 1 : \Om_M \ot \C A \to L^2(\Om_M,h) \hot L^2(A,\mu) .
\]
Since the unbounded operator $\ov{\pa} + \ov{\pa}^\da$ is essentially selfadjoint (by Theorem \ref{t:dada}) it holds that $D_q \hot 1$ is selfadjoint.
%

\begin{lemma}\label{l:univee}
  The linear automorphism $W^k_\Phi$ of $\C O(SU_q(N)) \ot \La^k(\cc^\ell) \ot \C A$ restricts to a linear automorphism $(W_M^k)_\Phi$ of the subspace $\Om_M^k \ot \C A$ for all $k \in \{0,1,\ldots,\ell\}$. It moreover holds that the commutator
  \begin{equation}\label{eq:dirmuluni}
\big[ \op_{k = 0}^\ell (W_M^k)_\Phi, (\ov \pa + \ov \pa^\da) \ot 1 \big] : \Om_M \ot \C A \to \Om_M \ot \C A 
\end{equation}
is equal to zero. In particular, we get that $\op_{k = 0}^\ell (W_M^k)_\Phi$ induces a unitary operator $(W_M)_\Phi$ on $L^2(\Om_M,h) \hot L^2(A,\mu)$ satisfying that $(W_M)_\Phi (D_q \hot 1) = (D_q \hot 1) (W_M)_\Phi$. 
\end{lemma}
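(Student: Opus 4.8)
The plan is to isolate a single algebraic identity and then deduce all three assertions from it. The identity states that the left action $d_\xi$ on the first tensor leg commutes with the coaction implemented by $W_\Phi$: for every $\xi \in \C U_q(\G{su}(N))$, $x \in \C O(SU_q(N))$ and $y \in \C A$ we have
\[
(d_\xi \ot 1)W_\Phi(x \ot y) = W_\Phi(d_\xi(x) \ot y) .
\]
To verify it I would write $(\Delta \ot 1)\Delta(x) = x_{(1)} \ot x_{(2)} \ot x_{(3)}$ and use $d_\xi(x) = \inn{S^{-1}(\xi),x_{(1)}} x_{(2)}$ from \eqref{eq:actenv} together with coassociativity; both sides then collapse to $\inn{S^{-1}(\xi),x_{(1)}} x_{(2)} \ot \Phi(x_{(3)})y$. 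This is just the standard fact that the left $\C U_q(\G{su}(N))$-action commutes with the right coaction, and it is the only computation of substance.

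For the first assertion I would note that $W^k_\Phi = (W_\Phi)_{13}$ acts only on the first and third legs, whereas the two families of operators carving out $\Om_M^k \ot \C A$ inside $\C O(SU_q(N)) \ot \La^k(\cc^\ell) \ot \C A$ are supported on the first two legs: the maps $d_{\eta_{(1)}} \ot \si(\eta_{(2)}) \ot 1$ for $\eta \in \C U_q(\G{su}(\ell))$, whose joint $\epsilon$-eigenspace is the invariance condition of Definition \ref{d:antihol}, and the operator $d_{K_\ell} \ot 1 \ot 1$, whose eigenspaces, refined by the basis labels $e_I$ on the second leg, encode the weight condition defining $\Ga_M^k$. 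Since $W^k_\Phi$ commutes with anything supported on the second leg, and commutes with $d_\xi \ot 1 \ot 1$ for every $\xi$ by the displayed identity (in particular for $\xi = \eta_{(1)}$ and for $\xi = K_\ell$), it commutes with both families and hence preserves their common eigenspace $\Om_M^k \ot \C A$. As $(W^k_\Phi)^{-1}$ enjoys the same commutations it preserves this subspace as well, so $W^k_\Phi$ restricts to a linear automorphism $(W_M^k)_\Phi$ of $\Om_M^k \ot \C A$.

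The vanishing of the commutator \eqref{eq:dirmuluni} follows from the same principle. Each summand of $\ov{\pa}$ and $\ov{\pa}^\da$ has the shape $d_\xi \ot S$ with $\xi \in \C U_q(\G{su}(N))$ and $S \in \{\ep^q_i, (\ep^q_i)^*\}$ acting on the second leg $\La(\cc^\ell)$. Tensoring with $1$ on the third leg, the displayed identity handles the first leg while disjointness of legs handles the second, so each $(d_\xi \ot S) \ot 1$ commutes with $W^k_\Phi$; since $S$ shifts the form degree one reads this as the intertwining of $(W_M^{k\pm1})_\Phi$ with $(W_M^k)_\Phi$ by the corresponding degree-shifting summand. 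Summing over $i$ and over the two operators, and using Proposition \ref{p:welldef} to know that $\C D_q \ot 1$ preserves $\Om_M \ot \C A$, yields that $(\ov{\pa} + \ov{\pa}^\da)\ot 1 = \C D_q \ot 1$ commutes with $\op_{k=0}^\ell (W_M^k)_\Phi$ on the core $\Om_M \ot \C A$.

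Finally, $W^k_\Phi$ is already a unitary on $L^2(SU_q(N)) \hot \La^k(\cc^\ell) \hot L^2(A,\mu)$ carrying the dense subspace $\Om_M^k \ot \C A$ of the closed subspace $L^2(\Om_M^k,h) \hot L^2(A,\mu)$ onto itself; it therefore restricts to a unitary on the latter, and the direct sum over $k$ defines the unitary $(W_M)_\Phi$ on $L^2(\Om_M,h) \hot L^2(A,\mu)$. Since both $(W_M)_\Phi$ and its inverse preserve the core $\Om_M \ot \C A$ of the self-adjoint operator $D_q \hot 1$ and commute with $\C D_q \ot 1$ there, a routine graph-norm approximation upgrades this to the operator identity $(W_M)_\Phi(D_q \hot 1) = (D_q \hot 1)(W_M)_\Phi$. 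The only genuinely analytic point is this last passage from the algebraic core to the closure; everything else is bookkeeping organised around the single commutation identity, which I regard as the crux, the one subtlety being that the weight condition of $\Ga_M^k$ involves $K_\ell \notin \C U_q(\G{su}(\ell))$ and so must be tracked separately, although it is covered by the very same identity applied with $\xi = K_\ell$.
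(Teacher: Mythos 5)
Your proposal is correct and follows essentially the same route as the paper's proof: the single commutation identity $(d_\xi \ot 1)W_\Phi(x \ot y) = W_\Phi(d_\xi(x) \ot y)$ (proved via coassociativity) is exactly the paper's key step, from which the paper likewise deduces that $W^k_\Phi$ commutes with all operators of the form $d_\eta \ot T \ot 1$, hence preserves $\Om_M^k \ot \C A$ and commutes with $(\ov\pa + \ov\pa^\da) \ot 1$, with the unitary and intertwining statements obtained by taking closures. Your write-up merely makes explicit some details the paper leaves implicit, such as tracking the $K_\ell$-weight condition separately and the graph-norm argument for passing from the core to $D_q \hot 1$.
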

\begin{proof}
  Let $\eta \in \C U_q(\G{su}(N))$ be given and remark that
  \[
  \begin{split}
    (d_\eta \ot 1) W_\Phi(x \ot y) & = (d_\eta \ot \Phi)( \De(x) ) \cd (1 \ot y)
    = (1 \ot \Phi)\De( d_\eta(x)) \cd (1 \ot y) \\
    & = W_\Phi(d_\eta \ot 1)(x \ot y) 
    \end{split}
  \]
  for all $x \in \C O(SU_q(N))$ and $y \in \C A$. This shows that $W^k_\Phi(d_\eta \ot T \ot 1) = (d_\eta \ot T \ot 1)W^m_\Phi$ for every $k,m \in \{0,1,\ldots,\ell\}$ and every linear operator $T : \La^m(\cc^\ell) \to \La^k(\cc^\ell)$. This in turn implies that both $W^k_\Phi$ and $(W^k_\Phi)^{-1}$ preserve the subspace $\Om_M^k \ot \C A$ and moreover that the commutator in \eqref{eq:dirmuluni} is trivial. The remaining claims of the lemma now follow by taking closures. 
\end{proof}

Remark that we may represent the minimal tensor product $C(\B CP_q^\ell) \ot_{\min} A$ faithfully on the Hilbert space tensor product $L^2(\Om_M,h) \hot L^2(A,\mu)$. It should then be emphasized that the unitary operator $(W_M)_\Phi$ implements the unital $*$-homomorphism $(1 \ot \Phi)\de : C(\B CP_q^\ell) \to C(\B CP_q^\ell) \ot_{\min} A$, where $\de$ denotes the right coaction of $C(SU_q(N))$ on $C(\B CP_q^\ell)$. In other words, we have that
\begin{equation}\label{eq:coprod}
(1 \ot \Phi)\de(x) = (W_M)_\Phi (x \ot 1) (W_M)_\Phi^* \quad \T{for all } x \in C(\B CP_q^\ell) ,
\end{equation}
where both sides of the equation are understood to be bounded operators on $L^2(\Om_M,h) \hot L^2(A,\mu)$.

For a single vector $\xi \in L^2(A,\mu)$ we introduce the bounded operator $T_\xi : L^2(\Om_M,h) \to L^2(\Om_M,h) \hot L^2(A,\mu)$ given by the formula $T_\xi(\ze) := \ze \ot \xi$. We record that the operator norm of $T_\xi$ is equal to the Hilbert space norm of $\xi$. The bounded operator $T_\xi$ intertwines the unbounded selfadjoint operators $D_q$ and $D_q \hot 1$ in so far that we have the inclusion $T_\xi D_q \su (D_q \hot 1) T_\xi$ of unbounded operators. Thus, by taking adjoints we also obtain the inclusion $T_\xi^*(D_q \hot 1) \su D_q T_\xi^*$.  

For two vectors $\xi,\eta \in L^2(A,\mu)$ we define the linear functional $\varphi_{\xi,\eta} : A \to \cc$ by putting $\varphi_{\xi,\eta}(x) := \inn{\xi, \pi(x) \eta}$ and this linear functional can be promoted to a slice map $1 \ot \varphi_{\xi,\eta}$ on the minimal tensor product $C(\B CP_q^\ell) \ot_{\min} A$. Upon recalling that our minimal tensor product is represented faithfully on the Hilbert space tensor product $L^2(\Om_M,h) \hot L^2(A,\mu)$ we then obtain the relationship
\[
(1 \ot \varphi_{\xi,\eta})(z) = T_\xi^* z T_\eta \quad \T{for all } z \in C(\B CP_q^\ell) \ot_{\T{min}} A ,
\]
where it is understood that both sides are operators on the Hilbert space $L^2(\Om_M,h)$.

\begin{lemma}\label{l:estfunc}
  Let $\xi,\eta \in \C A$ and let $x\in \Lip_{D_q}(\B CP_q^\ell)$. It holds that
$(1 \ot \varphi_{\xi,\eta} \Phi)\de(x)$ belongs to the Lipschitz algebra $\Lip_{D_q}(\B CP_q^\ell)$ and we have the estimate
\[
L_{D_q}^{\max}\big( (1 \ot \varphi_{\xi,\eta} \Phi) \de(x) \big)\leq \| \xi \|_2 \cd \| \eta \|_2 \cd L_{D_q}^{\max}(x). 
\]
\end{lemma}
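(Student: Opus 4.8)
The plan is to realize the element $(1 \ot \varphi_{\xi,\eta}\Phi)\de(x)$ as a compression of a unitary conjugate of $x \ot 1$ and then transport the Lipschitz estimate for $x$ through the multiplicative unitary $(W_M)_\Phi$. Concretely, I set $y := (1 \ot \varphi_{\xi,\eta}\Phi)\de(x)$, regarded as a bounded operator on $L^2(\Om_M,h)$ via the faithful representation, and $B := (W_M)_\Phi (x \ot 1)(W_M)_\Phi^*$ as a bounded operator on $L^2(\Om_M,h) \hot L^2(A,\mu)$. Combining the slice-map identity $(1 \ot \varphi_{\xi,\eta})(z) = T_\xi^* z T_\eta$ with \eqref{eq:coprod} applied to $z = (1 \ot \Phi)\de(x)$, I would obtain the operator factorization $y = T_\xi^* B T_\eta$. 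This identity is what carries the entire argument.

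The next step is to record that $B$ lies in the Lipschitz algebra of the self-adjoint operator $D_q \hot 1$ with slip-norm at most $L_{D_q}^{\max}(x)$. First I would note that $x \ot 1$ belongs to this Lipschitz algebra, the associated closed derivation sending $x \ot 1$ to $d_{\max}(x) \ot 1$; since the norm of $d_{\max}(x) \ot 1$ equals $\| d_{\max}(x) \| = L_{D_q}^{\max}(x)$, this bounds the slip-norm of $x \ot 1$ by $L_{D_q}^{\max}(x)$. By Lemma \ref{l:univee} the unitary $(W_M)_\Phi$ commutes with $D_q \hot 1$, and conjugation by a unitary commuting with the self-adjoint operator preserves the Lipschitz algebra and leaves the slip-norm unchanged; writing out the defining sesquilinear form of $B$ and substituting $u \mapsto (W_M)_\Phi^* u$ and $v \mapsto (W_M)_\Phi^* v$ reduces it to that of $x \ot 1$. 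Hence $B$ is Lipschitz for $D_q \hot 1$ with $L_{D_q \hot 1}^{\max}(B) \leq L_{D_q}^{\max}(x)$.

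With these preparations I would estimate the defining supremum \eqref{eq:suplip} for $y$ directly. For $\xi',\eta' \in \T{Dom}(D_q)$ the intertwinings $T_\xi D_q \su (D_q \hot 1)T_\xi$ and $T_\eta D_q \su (D_q \hot 1)T_\eta$ ensure that $T_\xi \xi', T_\eta \eta' \in \T{Dom}(D_q \hot 1)$ and let me move $D_q$ past the outer factors $T_\xi^*$ and $T_\eta$ in $y = T_\xi^* B T_\eta$. A short computation then gives
\[
\inn{D_q \xi', y \eta'} - \inn{\xi', y D_q \eta'} = \inn{(D_q \hot 1)T_\xi \xi', B T_\eta \eta'} - \inn{T_\xi \xi', B (D_q \hot 1) T_\eta \eta'} ,
\]
so that the Lipschitz form of $y$ at $(\xi',\eta')$ equals the Lipschitz form of $B$ at $(T_\xi \xi', T_\eta \eta')$. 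Invoking the bound $L_{D_q \hot 1}^{\max}(B) \leq L_{D_q}^{\max}(x)$ together with $\| T_\xi \xi' \|_2 = \| \xi \|_2 \| \xi' \|_2$ and $\| T_\eta \eta' \|_2 = \| \eta \|_2 \| \eta' \|_2$, the modulus of this quantity is at most $L_{D_q}^{\max}(x) \cd \| \xi \|_2 \| \eta \|_2 \| \xi' \|_2 \| \eta' \|_2$. Taking the supremum over $\| \xi' \|_2, \| \eta' \|_2 \leq 1$ shows both that $y \in \Lip_{D_q}(\B CP_q^\ell)$ and that $L_{D_q}^{\max}(y) \leq \| \xi \|_2 \| \eta \|_2 L_{D_q}^{\max}(x)$, as claimed.

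The hard part will be the careful bookkeeping of unbounded-operator domains at the two points where $D_q$ crosses a bounded operator: verifying that $x \ot 1$ (and therefore $B$) genuinely lies in the Lipschitz algebra of the closed operator $D_q \hot 1$, and that the inclusions $T_\xi D_q \su (D_q \hot 1)T_\xi$ really apply on all of $\T{Dom}(D_q)$ without loss of domain. Once these points are settled through the sesquilinear-form description of the Lipschitz algebra in \eqref{eq:suplip}, the remaining algebraic identities and norm estimates are routine.
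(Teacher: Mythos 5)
Your proposal is correct and follows essentially the same route as the paper: both proofs hinge on the factorization $(1 \ot \varphi_{\xi,\eta}\Phi)\de(x) = T_\xi^* (W_M)_\Phi (x \ot 1)(W_M)_\Phi^* T_\eta$ obtained from Lemma \ref{l:univee} and \eqref{eq:coprod}, together with the intertwining relations for $T_\xi$ and $T_\eta$. The only (immaterial) difference is in the final verification, where the paper computes the commutator $[D_q, (1 \ot \varphi_{\xi,\eta}\Phi)\de(x)]$ on the core $\Om_M$, whereas you bound the sesquilinear form \eqref{eq:suplip} on all of $\T{Dom}(D_q)$ by passing through the Lipschitz algebra of $D_q \hot 1$.
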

\begin{proof}
  It suffices to show that $(1 \ot \varphi_{\xi,\eta} \Phi) \de(x)$ maps $\Om_M$ into the domain of $D_q$ and that
  \[
\big[D_q, (1 \ot \varphi_{\xi,\eta} \Phi) \de(x) \big](\ze) = T_\xi^* (W_M)_\Phi (d_{\max}(x) \ot 1) (W_M)_\Phi^* T_\eta(\ze)
\]
for all $\ze \in \Om_M$. But this follows in a straightforward fashion from the equivariance properties combined in Lemma \ref{l:univee} and \eqref{eq:coprod} together with the observations made before the statement of the present lemma. To wit, we use that $(1 \ot \varphi_{\xi,\eta}\Phi) \de(x) = T_\xi^* (W_M)_\Phi (x \ot 1) (W_M)_\Phi^* T_\eta$. 
\end{proof}

\subsection{Relationship with the D\k{a}browski-Sitarz spectral triple}\label{ss:dabsit}
Let us spend some time clarifying the relationship between the even unital spectral triple $\big( \C O(\B CP_q^1), L^2(\Om_1,h), D_q\big)$ treated earlier in this section and the D\k{a}browski-Sitarz spectral triple over the Podle\'s sphere, see \cite{DaSi:DSP}. We are here focusing on the presentation from \cite{NeTu:LFQ} since this picture is compatible with the investigations of the spectral metric properties of the Podle\'s sphere given in \cite{AgKa:PSM,AgKaKy:PSC}.

We apply the notation $u = \ma{cc}{a^* & -qb \\ b^* & a}$ for the entries of the fundamental unitary for quantum $SU(2)$. The standard \emph{Podle\'s sphere} $C(S_q^2)$ is defined as the smallest unital $C^*$-subalgebra of $C(SU_q(2))$ containing $A := bb^*$ and $B := ab^*$. The corresponding coordinate algebra is denoted by $\C O(S_q^2)$ (and is generated as a unital $*$-subalgebra by $A$ and $B$).

The dual pairing described in Theorem \ref{t:dualpair} gives rise to an alternative left action of $\C U_q(\G{su}(2))$ on $\C O(SU_q(2))$ defined by putting $\pa_\eta(x) := (1 \ot \inn{\eta, \cd} )\De(x)$. This left action provides the coordinate algebra $\C O(SU_q(2))$ with the structure of a $\zz$-graded algebra with homogeneous subspaces defined by
\[
\C A_n := \big\{ x \in \C O(SU_q(2)) \mid \pa_K(x) = q^{-n/2} x \big\} \quad n \in \B Z .
\]
It holds that $\C A_0 = \C O(S_q^2)$ so that $\C A_n$ becomes a left module over $\C A_0$ for every $n \in \B Z$. We let $H_+$ and $H_-$ denote the Hilbert spaces obtained by taking the closure of respectively $\C A_1$ and $\C A_{-1}$ inside the Hilbert space $L^2(SU_q(2))$. The left action of $\C A_0$ on the dense subspaces $\C A_1$ and $\C A_{-1}$ can then be promoted to a pair of injective unital $*$-homomorphisms $\rho_{\pm} : C(S_q^2) \to \B B(H_{\pm})$. Taking the direct sum of these two representations we obtain a faithful representation $\rho$ of the Podle\'s sphere on the $\zz/2\zz$-graded Hilbert space $H_+ \op H_-$.

The linear operators $\pa_E : \C A_1 \to \C A_{-1}$ and $\pa_F : \C A_{-1} \to \C A_1$ can be promoted and combined into an odd unbounded operator
\[
\ma{cc}{0 & \pa_F \\ \pa_E & 0} : \C A_1 \op \C A_{-1} \to H_+ \op H_- 
\]
which turns out to be essentially selfadjoint. We apply the notation $\dirac_q$ for the corresponding selfadjoint closure.

The \emph{D\k{a}browski-Sitarz spectral triple}, in the presentation of Neshveyev and Tuset, agrees with the even unital spectral triple given by $\big( \C O(S_q^2), H_+ \op H_-, \dirac_q\big)$. As discussed after Definition \ref{d:specmet}, the D\k{a}browski-Sitarz spectral triple also exists in a Lipschitz version which we denote by $\big( \Lip_{\dirac_q}(S_q^2),H_+ \op H_-, \dirac_q \big)$. 

On the other hand, following the constructions of the present paper, we have that $\C O(\B CP_q^1)$ is generated by $A = bb^*$ and $C:= ab$ as a unital $*$-subalgebra of $\C O(SU_q(2))$ whereas $\Om_1 = \Om_1^0 \op \Om_1^1$ has homogeneous components given by
\[
\begin{split}
\Om_1^0 & = \big\{ x \in \C O(SU_q(2)) \mid d_K(x) = q^{1/2} x \big\} \quad \T{ and } \\
\Om_1^1 & = \big\{ x \in \C O(SU_q(2)) \mid d_K(x) = q^{-1/2} x \big\} .
\end{split}
\]
Furthermore, the relevant essentially selfadjoint unbounded operator is given by
\[
\C D_q = \ov{\pa} + \ov{\pa}^\da = q^{-1/2} \ma{cc}{0 & d_E \\ d_F & 0} : \Om_1^0 \op \Om_1^1 \to L^2( \Om_1^0,h) \op L^2(\Om_1^1,h) .
\]

We now introduce the key ingredient needed for describing the relationship between the two even unital spectral triples reviewed in this subsection. Define the $*$-automorphism $T : C(SU_q(2)) \to C(SU_q(2))$ by putting
\[
T(u_{ij}) := q^{j-i} u_{ji} \quad i,j \in \inn{2}
\]
and record that $T^2$ agrees with the identity operator. We refer to $T$ as the \emph{transpose} and the first result regarding this operation is that it leaves the Haar state invariant.

\begin{lemma}\label{l:transposeI}
We have the identity $hT = h$.
\end{lemma}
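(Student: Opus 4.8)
The plan is to show that the functional $hT$ is again a left-invariant state and then to force $hT = h$ by pairing this invariance against the right-invariance of $h$. Since $T$ is a unital $*$-automorphism, $hT$ is automatically a state: it is unital because $T(1) = 1$, and positive because $hT(x^*x) = h\big(T(x)^*T(x)\big) \geq 0$. So the whole content is to verify one invariance identity for $hT$, after which \eqref{eq:haar} will do the rest.

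The key structural input — and the step I expect to be the only real obstacle — is that the transpose reverses the coproduct up to the tensor flip $\Si$ on $C(SU_q(2)) \ot_{\T{min}} C(SU_q(2))$, namely
\[
\De \ci T = (T \ot T) \ci \De^{\T{op}} , \qquad \De^{\T{op}} := \Si \ci \De .
\]
First I would check this on the generators $u_{ij}$, where a direct computation gives $\De\big(T(u_{ij})\big) = q^{j-i} \sum_k u_{jk} \ot u_{ki}$ on the left and the same expression from $(T \ot T)\De^{\T{op}}(u_{ij})$ on the right. Because $\De$, $\Si$ and $T \ot T$ are all homomorphisms, both sides of the displayed identity are algebra homomorphisms on $\C O(SU_q(2))$, so agreement on the generators extends to the whole coordinate algebra and then to $C(SU_q(2))$ by continuity. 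I will use this in the equivalent form $(T \ot T)\De = \Si \ci \De \ci T$, obtained by composing with $\Si$ and using $\Si^2 = \id$.

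With this relation available, the left-invariance of $hT$ becomes a formal manipulation. Applying $T$ to $(hT \ot \id)\De(x)$ and combining $(T \ot T)\De = \Si \ci \De \ci T$ with the elementary identity $(h \ot \id)\Si = (\id \ot h)$, I obtain
\[
T\big( (hT \ot \id)\De(x) \big) = (h \ot \id)(T \ot T)\De(x) = (\id \ot h)\De\big(T(x)\big) = hT(x) \cd 1 ,
\]
where the final equality is precisely the invariance $(1 \ot h)\De = h(\cd)\cd 1$ of \eqref{eq:haar} applied to $T(x)$. Applying $T$ once more and using $T^2 = \id$ and $T(1) = 1$ yields $(hT \ot \id)\De(x) = hT(x) \cd 1$, the desired left-invariance of $hT$.

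Finally I would conclude by the standard pairing argument: for every $x$ one has $hT(x) = (hT \ot h)\De(x) = h(x)$, where the first equality uses the left-invariance of $hT$ just established and the second uses the right-invariance of $h$ from \eqref{eq:haar}. This gives $hT = h$. I emphasise that all the work is concentrated in establishing the coproduct-reversal relation and keeping track of the flip $\Si$; once that is in place, both the passage to invariance and the concluding uniqueness step are purely formal.
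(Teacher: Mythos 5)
Your proof is correct, but it takes a genuinely different route from the paper. The paper's proof is a direct verification: it quotes Woronowicz's explicit formula $h\big(a^k b^i (b^*)^j\big) = \de_{ij}\de_{k0}\,\frac{1-q^2}{1-q^{2(1+j)}}$ and observes that $hT=h$ follows by evaluating both functionals on this monomial basis (using that $T(a)=a$ and $T(b)=-b^*$, so $T$ permutes the monomials up to signs that cancel on the diagonal $i=j$ where $h$ is nonzero). Your argument instead isolates the structural identity $\De \ci T = (T \ot T)\ci \Si \ci \De$ --- correctly verified on the generators, where both sides give $q^{j-i}\sum_k u_{jk}\ot u_{ki}$, and extended by multiplicativity and continuity (note this extension step uses that $\C O(SU_q(2))$ is generated as an algebra, not merely as a $*$-algebra, by the $u_{ij}$; this holds here since $u_{11}=a^*$ and $u_{21}=b^*$ are themselves generators) --- and then runs the standard uniqueness argument for the Haar state: coproduct reversal converts the bi-invariance \eqref{eq:haar} of $h$ into invariance of the state $hT$, and evaluating $(hT\ot h)\De(x)$ in two ways forces $hT=h$. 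What your approach buys is independence from the explicit Haar-state formula, which is special to quantum $SU(2)$: the same argument works verbatim for any unital $*$-automorphism of a compact quantum group that reverses the coproduct up to the flip. What the paper's approach buys is brevity, since given the known formula the check is a two-line computation. Your manipulations with slice maps (the Fubini-type identity $(hT\ot h)\De = h\ci(hT\ot\id)\De = hT\ci(\id\ot h)\De$ and the boundedness of $\Si$ and $T\ot T$ on the minimal tensor product) are all standard and valid, so there is no gap.
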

\begin{proof}
  This can be viewed as a consequence of the explicit formula for the Haar state on quantum $SU(2)$ obtained by Woronowicz in \cite[Appendix A]{Wor:CMP}. We recall in this respect that 
  \[
h\big(a^k b^i (b^*)^j\big) = \de_{ij} \de_{k0} \cd \frac{1 - q^2}{1 - q^{2(1 + j)}} \quad \T{for all } k \in \zz \, , \, \, i,j \in \nn_0,
\]
where we use the convention that $a^k := (a^*)^{-k}$ for $k < 0$.
\end{proof}

As a consequence of Lemma \ref{l:transposeI}, we get that the transpose $T$ induces a selfadjoint unitary operator $T : L^2(SU_q(2)) \to L^2(SU_q(2))$. 

The second result regarding the transpose investigates its intertwining properties with respect to the two left actions $d$ and $\pa$ of $\C U_q(\G{su}(2))$ on the coordinate algebra $\C O(SU_q(2))$. Notice in this respect that $T$ restricts to a $*$-automorphism of the coordinate algebra $\C O(SU_q(2))$ and introduce the $*$-automorphism $\nu : \C U_q(\G{su}(2)) \to \C U_q(\G{su}(2))$ by putting
\[
\nu(K) := K^{-1} \, \, \T{ and } \, \, \, \nu(E) := - F .
\]

\begin{lemma}\label{l:transposeII}
For every $\eta \in \C U_q(\G{su}(2))$ it holds that $d_\eta T(x) = T \pa_{\nu(\eta)}(x)$ for all $x \in \C O(SU_q(2))$.
\end{lemma}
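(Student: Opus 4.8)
The plan is to reduce the operator identity to a scalar identity about the dual pairing and then to the behaviour of both sides on the algebra generators $u_{ij}$. The first step I would take is to record the compatibility of the transpose with the coproduct. Since $\De$, $T$ and the tensor flip $\tau(y \ot y') := y' \ot y$ are all algebra homomorphisms, and since one checks directly on generators that $\De T(u_{ij}) = q^{j-i} \sum_k u_{jk} \ot u_{ki} = \tau (T \ot T)\De(u_{ij})$, it follows that
\[
\De \circ T = \tau \circ (T \ot T) \circ \De
\]
holds on all of $\C O(SU_q(2))$, as both sides are algebra homomorphisms agreeing on the generators.

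Using this intertwiner I would rewrite the left-hand side of the lemma. Writing $\De(x) = x_{(1)} \ot x_{(2)}$ in Sweedler notation, the intertwiner yields $\De T(x) = T(x_{(2)}) \ot T(x_{(1)})$, whence
\[
d_\eta T(x) = \inn{S^{-1}(\eta), T(x_{(2)})} \, T(x_{(1)}) \quad \T{and} \quad T \pa_{\nu(\eta)}(x) = T(x_{(1)}) \, \inn{\nu(\eta), x_{(2)}} .
\]
Comparing the two expressions, the lemma follows once I establish the scalar identity $\inn{S^{-1}(\eta), T(y)} = \inn{\nu(\eta), y}$ for all $\eta \in \C U_q(\G{su}(2))$ and all $y \in \C O(SU_q(2))$.

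To prove this pairing identity I would observe that both $\eta \mapsto \inn{S^{-1}(\eta), T(\cd)}$ and $\eta \mapsto \inn{\nu(\eta), \cd}$ are algebra homomorphisms from $\C U_q(\G{su}(2))$ into the dual algebra $\C O(SU_q(2))^\circ$ with its convolution product. For the second map this is immediate, since $\nu$ is an algebra automorphism and $\eta \mapsto \inn{\eta, \cd}$ is multiplicative for convolution; for the first map the anti-multiplicativity of $S^{-1}$ is precisely compensated by the flip $\tau$ appearing in $\De T = \tau(T \ot T)\De$, so that it too is multiplicative. Consequently their agreement set is a subalgebra of $\C U_q(\G{su}(2))$, and it suffices to treat the generators $\eta \in \{K, E, F\}$. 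For each of these I would verify the identity on the algebra generators $u_{ij}$ using $\inn{\cd, u_{ij}} = \pi_{ij}(\cd)$ together with $\pi(K) = \T{diag}(q^{-1/2}, q^{1/2})$, $\pi(E) = e_{21}$, $\pi(F) = e_{12}$, the values $S^{-1}(K) = K^{-1}$, $S^{-1}(E) = -q^{-1}E$, $S^{-1}(F) = -qF$, and $T(u_{ij}) = q^{j-i} u_{ji}$. The case $\eta = K$ compares two \emph{characters} of $\C O(SU_q(2))$, so agreement on the $u_{ij}$ propagates to all of $\C O(SU_q(2))$; the cases $\eta = E, F$ compare two functionals that are skew-primitive with respect to the very same characters $\inn{K, \cd}$ and $\inn{K^{-1}, \cd}$, and here the already-settled case $\eta = K$ ensures that the twisted Leibniz rules governing the two sides coincide, so that agreement on generators again propagates to all monomials, hence to all of $\C O(SU_q(2))$.

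The main obstacle is bookkeeping rather than conceptual. One must check carefully that the flip $\tau$ in the coproduct intertwiner really turns the anti-homomorphism $S^{-1}$ into a genuine homomorphism $\eta \mapsto \inn{S^{-1}(\eta), T(\cd)}$, as this is what legitimizes reducing in $\eta$ to the generators $K, E, F$. One must then keep precise track of the half-integer powers of $q$ and the signs produced by $S^{-1}$ and by $T(u_{ij}) = q^{j-i} u_{ji}$ in the four generator computations. I expect the skew-primitivity step for $E$ and $F$ to be the place most likely to hide a sign error, since it relies on $S^{-1}(E)$ being skew-primitive with the same group-likes $K, K^{-1}$ as $E$ itself.
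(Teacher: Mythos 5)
Your proof is correct, but it takes a genuinely different route from the paper's, which never leaves the operator level: there one checks the identity for $\eta \in \{E,F,K\}$ and $x = u_{ij}$ directly, extends in $x$ by observing that $T d_\eta T$ obeys the same twisted Leibniz rules as $\pa_{\nu(\eta)}$, and extends in $\eta$ using that both $\eta \mapsto d_\eta$ and $\eta \mapsto \pa_{\nu(\eta)}$ are multiplicative. You instead factor the statement through two intermediate results the paper does not formulate: the coproduct intertwining relation $\De \ci T = \tau \ci (T \ot T) \ci \De$ and the scalar pairing identity $\inn{S^{-1}(\eta), T(y)} = \inn{\nu(\eta), y}$, after which all propagation happens at the level of linear functionals (algebra homomorphisms into the convolution dual in the variable $\eta$; characters and skew-primitive functionals in the variable $y$). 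Both arguments rest on the same generator computations and the same two propagation mechanisms, but your packaging isolates exactly why the reduction in $\eta$ is legitimate --- the flip $\tau$ compensates the anti-multiplicativity of $S^{-1}$ --- which the paper leaves implicit in the phrase ``well-behaved with respect to products and sums''; the price is the extra lemma on $\De \ci T$. One small repair: $\C U_q(\G{su}(2))$ is generated as an algebra by $E$, $F$, $K$ and $K^{-1}$, so your reduction to $\{K,E,F\}$ should note that the $K^{-1}$-case follows from the $K$-case by uniqueness of inverses in the convolution algebra; this also supplies $\inn{K, T(\cd)} = \inn{K, \cd}$, which your skew-primitivity step for $E$ and $F$ needs alongside the settled $\inn{K^{-1}, T(\cd)} = \inn{K^{-1}, \cd}$, since the relevant twisted Leibniz rule involves both $\inn{K,\cd}$ and $\inn{K^{-1},\cd}$.
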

\begin{proof}
The first step is to verify the relevant identity in the case where $\eta \in \{E,F,K\}$ and $x = u_{ij}$ for some $i,j \in \inn{2}$ (this can be done by a straightforward computation). The more general case where $x$ is arbitrary (and $\eta \in \{E,F,K\}$) then follows by noting that $T d_\eta T$ obeys the same algebraic rules as $\pa_{\nu(\eta)}$ with respect to linear combinations and products in the coordinate algebra $\C O(SU_q(2))$. The case where both $\eta$ and $x$ are arbitrary finally follows since both sides of the equation are well-behaved with respect to products and sums of elements in $\C U_q(\G{su}(2))$.
\end{proof}

An application of Lemma \ref{l:transposeI} in combination with Lemma \ref{l:transposeII} shows that the transpose restricts to two selfadjoint unitary operators $T_+ : H_+ \to L^2(\Om_1^0,h)$ and $T_- : H_- \to L^2(\Om_1^1,h)$ which intertwine the relevant unbounded selfadjoint operators (up to a factor $-q^{-1/2}$) via the identity
\[
D_q (T_+ \op T_-) = -q^{-1/2} (T_+ \op T_-) \dirac_q .
\]
It is moreover clear from the definition of the transpose that $T$ restricts to a $*$-isomorphism $T : C(S_q^2) \to C(\B CP_q^1)$.

These observations yield the main result of this subsection, providing a precise relationship between the slip-norms associated to our two even unital spectral triples.

\begin{prop}\label{p:dabsitdada}
  The $*$-isomorphism $T : C(S_q^2) \to C(\B CP_q^1)$ restricts to a $*$-isomorphism between the Lipschitz algebras $T : \Lip_{\dirac_q}(S_q^2) \to \Lip_{D_q}(\B CP_q^1)$ satisfying the identity
  \[
L_{D_q}^{\max}\big(T(x)\big) = q^{-1/2} L_{\dirac_q}^{\max}(x) \quad \mbox{for all } x \in \Lip_{\dirac_q}(S_q^2).
\]
In particular, it holds that $\big( \Lip_{D_q}(\B CP_q^1), L^2(\Om_1,h), D_q \big)$ is a spectral metric space if and only if $\big( \Lip_{\dirac_q}(S_q^2), H_+ \op H_-, \dirac_q\big)$ is a spectral metric space.
\end{prop}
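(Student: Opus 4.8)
The plan is to upgrade the three compatibility facts assembled just above the statement into a single unitary equivalence of the two even unital spectral triples, and then to read the slip-norm identity directly off the sesquilinear form in \eqref{eq:suplip}. Write $U := T_+ \op T_- : H_+ \op H_- \to L^2(\Om_1,h)$ for the selfadjoint unitary coming from the transpose, so that the intertwining established above becomes $D_q U = -q^{-1/2} U \dirac_q$. The first step is to record the companion intertwining at the level of the representations, namely $\rho\big(T(x)\big) U = U \rho(x)$ for all $x \in C(S_q^2)$, where $\rho$ on the left is the representation of $C(\B CP_q^1)$ and $\rho$ on the right is the representation of $C(S_q^2)$. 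On the dense subspaces $\C A_{\pm 1}$ this is immediate from multiplicativity of the transpose, since $T(x \xi) = T(x) T(\xi)$, and Lemma \ref{l:transposeII} applied to $\eta = K$ shows that $T$ sends $\C A_1$, $\C A_{-1}$ and $\C A_0$ onto $\Om_1^0$, $\Om_1^1$ and $\C O(\B CP_q^1)$ respectively (the $\pa_K$-eigenvalues being converted into the correct $d_K$-eigenvalues); the general case then follows by continuity.

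With both intertwiners in hand I would fix $x \in \Lip_{\dirac_q}(S_q^2)$ and, for arbitrary $\xi',\eta' \in \T{Dom}(D_q)$, write $\xi' = U\xi$ and $\eta' = U\eta$ with $\xi,\eta \in \T{Dom}(\dirac_q)$, which is legitimate because $U$ restricts to a norm-preserving bijection $\T{Dom}(\dirac_q) \to \T{Dom}(D_q)$. Using $D_q U = -q^{-1/2} U \dirac_q$, the representation intertwiner, and unitarity of $U$, a direct computation gives
\[
\inn{D_q \xi', T(x)\eta'} - \inn{\xi', T(x) D_q \eta'} = -q^{-1/2}\big( \inn{\dirac_q \xi, x\eta} - \inn{\xi, x \dirac_q \eta}\big) .
\]
Taking the supremum of the absolute values over the unit balls of $\T{Dom}(\dirac_q)$, which $U$ carries onto the unit balls of $\T{Dom}(D_q)$, shows simultaneously that the supremum \eqref{eq:suplip} is finite for $T(x)$ if and only if it is finite for $x$, and that it scales by the positive factor $q^{-1/2}$. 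Hence $T$ restricts to a bijection $\Lip_{\dirac_q}(S_q^2) \to \Lip_{D_q}(\B CP_q^1)$, which is a $*$-isomorphism because $T$ already is one at the $C^*$-level, and the identity $L_{D_q}^{\max}\big(T(x)\big) = q^{-1/2} L_{\dirac_q}^{\max}(x)$ follows.

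For the final equivalence I would transport the slip-norm identity to the Monge-Kantorovich metrics. Letting $T^* : S(C(\B CP_q^1)) \to S(C(S_q^2))$, $T^*(\mu) := \mu \circ T$, denote the induced weak-$*$ homeomorphism of state spaces and exploiting homogeneity of the defining supremum, the slip-norm identity yields
\[
\mk_{L_{D_q}^{\max}}(\mu,\nu) = q^{1/2} \cd \mk_{L_{\dirac_q}^{\max}}\big(T^*\mu, T^*\nu\big) \quad \T{for all } \mu,\nu \in S(C(\B CP_q^1)) .
\]
Since $T^*$ is a weak-$*$ homeomorphism and $q^{1/2}$ is a fixed positive scalar, one of these extended metrics metrizes the weak-$*$ topology exactly when the other does; by Definition \ref{d:specmet} this is precisely the asserted equivalence of spectral metric spaces.

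I expect the only genuinely delicate point to be the domain bookkeeping: confirming that $U$ really restricts to a norm-preserving bijection of $\T{Dom}(\dirac_q)$ onto $\T{Dom}(D_q)$, so that the supremum in \eqref{eq:suplip} transports without loss, and checking that the representation intertwiner extends by continuity from the cores $\C A_{\pm 1}$ to all of $C(S_q^2)$. Once the two intertwining relations are firmly in place, everything else is formal.
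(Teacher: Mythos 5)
Your proposal is correct and follows essentially the paper's own (implicit) argument: the paper deduces the proposition directly from the observations preceding it, namely the intertwining identity $D_q (T_+ \op T_-) = -q^{-1/2} (T_+ \op T_-) \dirac_q$ together with the fact that $T$ restricts to a $*$-isomorphism $C(S_q^2) \to C(\B CP_q^1)$, and your write-up simply makes this explicit via the sesquilinear-form characterization \eqref{eq:suplip} of the Lipschitz algebras and the transport of the Monge--Kantorovich metrics under $T^*$. The two points you flag as delicate (the correspondence $U\,\T{Dom}(\dirac_q) = \T{Dom}(D_q)$, which is encoded in the closed-operator identity above, and the representation intertwiner $\rho(T(x))U = U\rho(x)$, which follows from multiplicativity of $T$ on the dense cores and continuity) are handled correctly.
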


\section{The extension theorem}
Throughout this whole section we let $M \in \zz$ be fixed since this $M$ appears in the definition of the even unital spectral triple $\big( \C O( \B CP_q^\ell),L^2(\Om_M,h), D_q \big)$ from Theorem \ref{t:dada}. 

Let us introduce the norm-dense unital $*$-subalgebra $\Lip(I_q) \su C(I_q)$ of \emph{$q$-Lipschitz functions} on the quantized interval $I_q = \Sp(y_\ell)$ by putting
\[
\Lip_{D_q}(I_q) := C(I_q) \cap \Lip_{D_q}(\B CP_q^\ell) . 
\]
Notice here that we are identifying $C(I_q)$ with the unital $C^*$-subalgebra of quantum projective space generated by $y_\ell$, see the discussion in Subsection \ref{ss:quasph}. The $q$-Lipschitz functions are equipped with the slip-norm $L_{D_q}^{\max} : \Lip_{D_q}(I_q) \to [0,\infty)$ defined as the restriction of the slip-norm $L_{D_q}^{\max} : \Lip_{D_q}(\B C P_q^\ell) \to [0,\infty)$. Recall here that both $L_{D_q}^{\max}$ and the Lipschitz algebra $\Lip_{D_q}(\B CP_q^\ell)$ come from the even unital spectral triple on quantum projective space. 

Our aim is now to prove an extension theorem saying that the pair $\big( \Lip_{D_q}(\B CP_q^\ell), L_{D_q}^{\max} \big)$ is a compact quantum metric space if and only if the pair $\big( \Lip_{D_q}(I_q),L_{D_q}^{\max}\big)$ is a compact quantum metric space.

The strategy for proving this extension theorem is inspired from the constructions appearing in the paper \cite{AgKaKy:PSC}, regarding quantum Gromov-Hausdorff convergence of the standard Podle\'s spheres with respect to the D\k{a}browski-Sitarz spectral triple, \cite{DaSi:DSP}. In the paper \cite{AgKaKy:PSC}, the authors construct a sequence of positive unital endomorphisms of the standard Podle\'s sphere $C(S_q^2) = C(\B CP_q^1)$ such that all the endomorphisms appearing in this sequence have finite dimensional images. Moreover, the whole sequence converges pointwise to the identity operator in a way which can be estimated by the Monge-Kantorovich metric on the state space of the standard Podle\'s sphere.

We shall see that similar constructions for the quantum projective space $C(\B CP_q^\ell)$ will help us verify the conditions in Theorem \ref{t:approx} for the pair $\big(\Lip_{D_q}(\B CP_q^\ell),L_{D_q}^{\max}\big)$ under the assumption that $\big(\Lip_{D_q}(I_q),L_{D_q}^{\max}\big)$ is a compact quantum metric space. More precisely, using the techniques from \cite{AgKaKy:PSC} we are able to establish the finite diameter condition and prove the existence of arbitrarily precise $\ep$-approximations.

\subsection{Weak-$*$ convergence to the counit}\label{ss:weakstar}
In this subsection, we shall apply the Haar state $h : C(SU_q(N))\to \cc$ to construct a sequence of states $\{h_k\}_{k = 0}^\infty$ on $C(\B CP_q^\ell)$ which converges in the weak-$*$ topology to the restriction of the counit $\epsilon : C(\B CP_q^\ell) \to \cc$. Notice that a related result already appears in \cite[Proposition 2.4 and Theorem 2.5]{IzNeTu:PBD}, but we are currently unaware of the precise relationship between the statements in \cite{IzNeTu:PBD} and our Proposition \ref{p:weak*}. 

For each $k\in \mathbb{N}_0$ we put $a_k := (z_N^*)^k z_N^k$ and introduce the state 
\[
h_k :  C(\B CP_q^\ell) \to \B C \quad
h_k(x) :=   h\big((z_{N}^*)^k x z_{N}^k\big) \cdot h(a_k)^{-1} .
\]
Notice that $h_0$ agrees with the restriction of the Haar state to quantum projective space. 

In order to prove that the sequence $\{ h_k\}_{k = 0}^\infty$ converges to the restricted counit we start out by reducing this problem to the quantized interval $I_q$. This is carried out by means of a conditional expectation of $C(\B CP_q^\ell)$ onto $C(I_q)$. This conditional expectation is also applied in \cite{Nag:HQG} (and in many other places) but for the sake of completeness we review the construction here.

For $N \geq 3$ we define the surjective unital $*$-homomorphism $\Phi : C(SU_q(N)) \to C(SU_q(N - 1))$ by putting
\begin{equation}\label{eq:defphi}
\Phi(u_{ij}) := \fork{ccc}{ u_{ij} & \T{for} & i,j < N \\ \de_{ij} & \T{for} & i = N \T{ or } j = N} . 
\end{equation}
For $N = 2$ we consider the unital $C^*$-algebra of continuous functions on the unit circle $C(\B T)$ and let $w : \B T \to \cc$ denote the inclusion. We then define the surjective unital $*$-homomorphism $\Phi : C(SU_q(2)) \to C(\B T)$ by the formula
\begin{equation}\label{eq:defphiII}
\Phi(u_{ij}) := \fork{ccc}{ w & \T{for} & i = j = 1 \\ \de_{ij} w^{-1} & \T{for} & i = 2 \T{ or } j = 2} .
\end{equation}
In both cases, we record that $\Phi$ is compatible with the coalgebra structures so that $(\Phi \ot \Phi)\De = \De \Phi$ (where the coproduct on $C(\B T)$ is induced by the group structure of the compact group $\B T$). We therefore obtain a conditional expectation $E : C(\B CP_q^\ell) \to C(\B CP_q^\ell)$ by putting
\[
E(x) := (1 \ot h \Phi)\de(x),
\]
where $h$ denotes the Haar state both for $N \geq 3$ and for $N = 2$ (for $N = 2$ the Haar state is just given by Riemann integration with respect to the Haar measure on the unit circle). We remark that the operation $(h \Phi \ot 1) \de$ agrees with the identity map on $C(\B CP_q^\ell)$ (in fact this even holds on $C(S_q^{2\ell + 1})$ for $N \geq 3$), but in the definition of $E$ we are slicing with the state $h \Phi : C(SU_q(N)) \to \B C$ on the \emph{right} leg of the tensor product.


\begin{prop}
The image of the conditional expectation $E : C(\B CP_q^\ell) \to C(\B CP_q^\ell)$ agrees with the unital $C^*$-subalgebra $C(I_q)$.
\end{prop}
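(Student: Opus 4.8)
The plan is to recognize $E$ as the Haar-averaging projection attached to a coaction and then to identify its image with the corresponding fixed-point algebra. Writing $\be := (1 \ot \Phi)\de$, the compatibility $(\Phi \ot \Phi)\De = \De \Phi$ together with coassociativity of $\de$ shows that $\be : C(\B CP_q^\ell) \to C(\B CP_q^\ell) \ot_{\T{min}} C(SU_q(N-1))$ is a right coaction of the compact quantum group $C(SU_q(N-1))$ (for $N = 2$ the relevant compact quantum group is $C(\B T)$ via \eqref{eq:defphiII}). Since $h$ denotes the Haar state, its invariance (cf. \eqref{eq:haar}) gives the coaction identity $(\be \ot 1)\be = (1 \ot \De)\be$, and hence by a standard argument that $E = (1 \ot h)\be$ is a conditional expectation whose image is exactly the fixed-point algebra $A^\be := \{ x \in C(\B CP_q^\ell) \mid \be(x) = x \ot 1 \}$. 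It therefore suffices to prove that $A^\be = C(I_q)$.

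First I would establish the inclusion $C(I_q) \su A^\be$ by checking that the generator $y_\ell$ is a fixed point. Using $\De(z_i) = \sum_k z_k \ot u_{ki}$ and $\De(z_i^*) = \sum_l z_l^* \ot u_{li}^*$ we get $\be(z_i z_i^*) = \sum_{k,l} z_k z_l^* \ot \Phi(u_{ki}) \Phi(u_{li}^*)$. For $i \le \ell = N - 1$ the definition \eqref{eq:defphi} of $\Phi$ forces $\Phi(u_{ki}) = u_{ki}$ for $k < N$ and $\Phi(u_{Ni}) = 0$, so summing over $i$ and applying the $SU_q(N-1)$-unitarity relation $\sum_{i = 1}^{N-1} u_{ki} u_{li}^* = \de_{kl}$ (the first identity of \eqref{eq:unitrans} for $SU_q(N-1)$) yields $\be(y_\ell) = \sum_{k = 1}^{N-1} z_k z_k^* \ot 1 = y_\ell \ot 1$. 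Thus $y_\ell \in A^\be$, and since $A^\be$ is a $C^*$-subalgebra we conclude $C(I_q) = C^*(y_\ell) \su A^\be$.

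The reverse inclusion $A^\be \su C(I_q)$ is the crux. By continuity of $E$ and density of $\C O(\B CP_q^\ell)$ it suffices to show that the algebraic invariants $\C O(\B CP_q^\ell)^\be$ are contained in $\B C[y_\ell]$. Here I would exploit the way the generators transform under $\be$: the computation above shows $\be(z_N) = z_N \ot 1$ and $\be(z_N^*) = z_N^* \ot 1$, whereas $(z_1, \ldots, z_{N-1})$ and $(z_1^*, \ldots, z_{N-1}^*)$ span, respectively, the fundamental corepresentation $V$ and its conjugate $\ov V$ of $SU_q(N-1)$. Consequently $\C O(\B CP_q^\ell)$ is a quotient of the $SU_q(N-1)$-comodule algebra generated by the invariant pair $z_N, z_N^*$ together with $V \op \ov V$, and determining its invariants is a quantum invariant-theory problem. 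The first fundamental theorem for $SU_q(N-1)$ asserts that the only invariants built from one copy of $V$ and one copy of $\ov V$ are the powers of the canonical contraction $V \ot \ov V \to \B C$, which in our coordinates is precisely $\sum_{k = 1}^{N-1} z_k z_k^* = y_\ell$; combined with the sphere relation $z_N z_N^* = 1 - y_\ell$ from \eqref{eq:defsphere} this forces every invariant to be a polynomial in $y_\ell$.

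I expect this last step to be the main obstacle, since it requires the nonabelian invariant theory of $SU_q(N-1)$ rather than a formal manipulation. Concretely, I would make it rigorous through a weight argument for the dual right action $\pa$ of $\C U_q(\G{su}(N-1))$: an element of $A^\be$ satisfies $\pa_\eta(x) = \ep(\eta) x$ for all $\eta \in \C U_q(\G{su}(N-1))$, so in particular it is annihilated by $\pa_{E_r}$ and $\pa_{F_r}$ and has trivial weight under each $\pa_{K_r}$ for $r < N - 1$. Tracking these weights on monomials in $z_i, z_i^*$ shows that the dependence on the indices $1, \ldots, N-1$ can only enter through the contraction $y_\ell$, after which the defining relations \eqref{eq:defsphere} express everything in terms of $y_\ell$. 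For $N = 2$ the group $C(\B T)$ is abelian and the whole argument collapses to the elementary statement that the only $\B T$-weight-zero combinations are functions of $z_1 z_1^* = y_1$, so no genuine invariant theory is needed in that case.
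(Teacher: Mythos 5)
Your overall skeleton is essentially the same as the paper's: both arguments identify the image of $E$ with the fixed-point algebra of the restricted coaction $\be := (1 \ot \Phi)\de$ of $C(SU_q(N-1))$ (of $C(\B T)$ when $N = 2$); your explicit check that $\be(y_\ell) = y_\ell \ot 1$, giving $C(I_q) \su \T{image}(E)$, is correct, as are the reduction to algebraic invariants and the elementary torus-weight argument for $N = 2$ (which the paper phrases via a circle action). The divergence is at the step you yourself single out as the crux, namely the inclusion of the fixed-point algebra into $C(I_q)$ for $N \geq 3$: the paper does not prove this from scratch, but extends $E$ to the conditional expectation $\wit{E} = (1 \ot h\Phi)\de$ on the quantum sphere, quotes Nagy's theorem \cite[Theorem 3.3]{Nag:HQG} to identify the image of $\wit{E}$ with $C^*(z_N,1)$, and concludes via $C^*(z_N,1) \cap C(\B CP_q^\ell) = C(I_q)$.

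Your substitute for that citation contains a genuine gap. First, the ``first fundamental theorem for $SU_q(N-1)$'' is invoked without a precise statement or reference, and as a program it is incomplete: in the quantum setting the invariants of words in $V$ and $\ov{V}$ are spanned by all iterated contractions (which differ by powers of $q$ depending on their nesting) together with quantum determinant invariants once the degree reaches $N-1$; one must then show that in the quotient algebra the determinant invariants vanish (they do, since the $z_i$ $q$-commute) and that all iterated contractions collapse onto polynomials in $y_\ell$, $z_N$, $z_N^*$, and also justify that invariants of a quotient comodule algebra are the image of the invariants upstairs (true by cosemisimplicity, but a needed step). More seriously, the back-up ``weight argument'' fails as described: the operators $\pa_{K_r}$ act diagonally on monomials, but $\pa_{E_r}$ and $\pa_{F_r}$ do not, so invariance cannot be tracked monomial by monomial. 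Concretely, for $N = 3$ the monomial $z_1 z_1^*$ has trivial weight under $\pa_{K_1}$, yet $\pa_{E_1}(z_1 z_1^*)$ is a nonzero scalar multiple of $z_2 z_1^*$; only the combination $z_1 z_1^* + z_2 z_2^* = y_2$ is invariant. Determining which linear combinations of weight-zero monomials lie in the joint kernel of the $\pa_{E_r}$'s is precisely the invariant-theory problem, i.e.\ the content of Nagy's theorem, so as written your proposal leaves the crux unproven. To complete it you should either cite \cite[Theorem 3.3]{Nag:HQG} as the paper does, or genuinely carry out the decomposition of $\C O(\B CP_q^\ell)$ into $SU_q(N-1)$-comodules (for instance via Peter--Weyl and branching multiplicities), which is substantially more than bookkeeping.
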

\begin{proof}
  For $N = 2$ we consider the strongly continuous circle action $\si$ on $C(\B CP_q^1)$ satisfying that $\si_\la(z_2 z_2^*) = z_2 z_2^*$ and $\si_\la(z_1 z_2^*) = \la^2 z_1 z_2^*$ for all $\la \in \B T$. The statement of the proposition can then be obtained by noting that the image of $E$ agrees with the fixed point algebra for the circle action $\si$. 

For $N \geq 3$, the result is a consequence of \cite[Theorem 3.3]{Nag:HQG}. Indeed, we may extend the conditional expectation $E$ to a conditional expectation on the quantum sphere, $\wit{E} : C(S_q^{2\ell + 1}) \to C(S_q^{2\ell + 1})$ defined by the same formula $\wit{E}(x) = (1 \ot h \Phi)\de(x)$ (but now using the right coaction of quantum $SU(N)$ on the quantum sphere). As a consequence of \cite[Theorem 3.3]{Nag:HQG} we then get that the image of $\wit{E}$ coincides with $C^*(z_N,1)$ (the smallest unital $C^*$-subalgebra of $C(S_q^{2\ell + 1})$ which contains $z_N$). The result of the proposition now follows by noting that $C^*(z_N,1) \cap C(\B CP_q^\ell) = C(I_q)$.
\end{proof}

The main point is that the state $h_k : C(\B CP_q^\ell)\to \cc$ for $k \in \nn_0$ and the restricted counit $\epsilon : C(\B CP_q^\ell)\to \cc$ are invariant under the conditional expectation $E$:

\begin{lemma}\label{l:inv}
Let $x\in C(\B CP_q^\ell)$ and $k\in \B N_0$. It holds that
\[
h_k(E(x)) = h_k(x ) \, \, \mbox{ and } \, \, \, \epsilon(E(x)) = \epsilon(x). 
\]
\end{lemma}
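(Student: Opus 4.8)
The plan is to treat the two identities separately: the counit identity falls out quickly from the counit law together with the way $\Phi$ collapses $C(\B CP_q^\ell)$, while the $h_k$-identity requires understanding how the conditional expectation interacts with conjugation by powers of $z_N$.

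For the counit, I would unwind $E(x) = (1\ot h\Phi)\de(x)$ and apply $\epsilon$ (the restriction of the counit of $C(SU_q(N))$) to the first leg, obtaining
\[
\epsilon(E(x)) = (\epsilon\ot h\Phi)\De(x) = h\Phi\big( (\epsilon\ot 1)\De(x)\big) = h\Phi(x),
\]
where the last step is the counit law $(\epsilon\ot 1)\De = \id$. It then suffices to show that $h\Phi$ and $\epsilon$ agree on $C(\B CP_q^\ell)$, and I would do this on the generators $z_iz_j^*$: a direct computation with \eqref{eq:defphi} (resp.\ \eqref{eq:defphiII}) gives $\Phi(z_iz_j^*) = \de_{iN}\de_{jN}\cdot 1$. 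Thus $\Phi$ sends every generator of $C(\B CP_q^\ell)$ into the scalars $\cc 1$; being a unital $*$-homomorphism it therefore maps all of $C(\B CP_q^\ell)$ into $\cc 1$, acting there as a character which agrees with $\epsilon$ on generators. Hence $h\Phi(x) = \epsilon(x)$ for $x\in C(\B CP_q^\ell)$, giving $\epsilon(E(x)) = \epsilon(x)$.

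For the $h_k$-identity I would pass to the quantum sphere and use the conditional expectation $\wit E(w) := (1\ot h\Phi)\de(w)$ on $C(S_q^{2\ell+1})$ (which restricts to $E$ on $C(\B CP_q^\ell)$), together with the coaction $\beta := (1\ot\Phi)\de$ it is built from. The key step is the bimodule identity
\[
\wit E\big( (z_N^*)^k\, w\, z_N^k\big) = (z_N^*)^k\, \wit E(w)\, z_N^k \qquad \big(w\in C(S_q^{2\ell+1})\big).
\]
I would derive this from the near-coinvariance of $z_N$: starting from $\de(z_N) = \sum_a z_a\ot u_{aN}$ and using that the off-diagonal entries $\Phi(u_{aN})$ ($a\neq N$) vanish in both cases, one gets $\beta(z_N) = z_N\ot v$ and $\beta(z_N^*) = z_N^*\ot v^*$ for the unitary $v := \Phi(u_{NN})$ (equal to $1$ when $N\geq 3$ and lying in the commutative algebra $C(\B T)$ when $N = 2$). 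Since $\beta$ is a $*$-homomorphism, conjugating by $z_N^k$ inside $\wit E$ factors the powers $(z_N^*)^k, z_N^k$ out of the first tensor leg and leaves a twist $v^{*k}(\cdot)v^k$ inside the Haar slice; this twist is absorbed because $v=1$ for $N\geq 3$ and because $C(\B T)$ is commutative (so $v^{*k}(\cdot)v^k = (\cdot)$ as $v$ is unitary) for $N=2$. Granting this, and using that $\wit E$ preserves the Haar state (a one-line consequence of the right invariance \eqref{eq:haar}), I would conclude
\[
h_k(E(x)) = c_k\, h\big( (z_N^*)^k\, \wit E(x)\, z_N^k\big) = c_k\, h\big( \wit E\big((z_N^*)^k x z_N^k\big)\big) = c_k\, h\big((z_N^*)^k x z_N^k\big) = h_k(x),
\]
with $c_k := h(a_k)^{-1}$.

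The main obstacle is this bimodule identity, or really the coinvariance of $z_N$ under $\beta$, since this is the only point where the explicit definition of $\Phi$ is used essentially and where the two cases genuinely diverge: for $N\geq 3$ one has $\wit E(z_N) = z_N$ (indeed $\wit E$ is then the conditional expectation onto $C^*(z_N,1)$, so the bimodule property is automatic), whereas for $N=2$ one computes $\wit E(z_N) = z_N\cdot h(v) = 0$, so $z_N$ is \emph{not} in the image and one must instead track the central unitary $v$ and check that it is annihilated by the Haar integral on the circle. Everything else reduces to the counit law, the right invariance of the Haar state, and the homomorphism property of $\beta$.
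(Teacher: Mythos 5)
Your proposal is correct, and it splits naturally into a part that matches the paper and a part that does not. For the counit identity your argument is exactly the paper's: both hinge on the computation $\Phi(z_iz_j^*) = \de_{iN}\de_{jN}\cdot 1 = \epsilon(z_iz_j^*)\cdot 1$ and the counit law, giving $\epsilon(E(x)) = h\Phi(x) = \epsilon(x)$. For the $h_k$-identity, however, you take a genuinely different route. The paper never leaves $C(\B CP_q^\ell)$: it invokes the modular automorphism $\te$ of the Haar state, notes from \eqref{eq:modular} that $\te(z_N^*) = q^{2(1-N)}z_N^*$, so that $z_N^k\te(z_N^*)^k = q^{2k(1-N)}z_N^k(z_N^*)^k$ lies in $C(I_q)$, and then computes $h\big((z_N^*)^kE(x)z_N^k\big) = h\big(E(x)\,z_N^k\te(z_N^*)^k\big) = h\big(x\,z_N^k\te(z_N^*)^k\big) = h\big((z_N^*)^kxz_N^k\big)$, where the first and last equalities use the modular property $h(xy)=h(y\te(x))$, and the middle one uses that $E$ is a conditional expectation onto $C(I_q)$ (so $E(xc)=E(x)c$ for $c\in C(I_q)$) together with $hE=h$. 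You replace all of this modular machinery by the multiplicativity of $\beta = (1\ot\Phi)\de$ and the computation $\beta(z_N)=z_N\ot v$ with $v=\Phi(u_{NN})$ unitary ($v=1$ for $N\geq 3$ by \eqref{eq:defphi}, and $v=w^{-1}$ central in $C(\B T)$ for $N=2$ by \eqref{eq:defphiII}), which yields the bimodule identity $\wit E\big((z_N^*)^k y z_N^k\big) = (z_N^*)^k\,\wit E(y)\,z_N^k$ by direct slice-map manipulation; after that only the invariance $h\wit E = h$ is needed. The trade-off is clear: your argument is self-contained at the level of the coaction --- it uses neither the modular data \eqref{eq:modular} nor the identification of the image of $E$ (resp.\ $\wit E$) with $C(I_q)$ (resp.\ $C^*(z_N,1)$), the latter resting on Nagy's theorem for $N\geq 3$ --- whereas the paper's argument is shorter precisely because it quotes those two ingredients, both of which it has already set up for other purposes.
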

  \begin{proof}
    We first notice that the bi-invariance property of the Haar state \eqref{eq:haar} implies $h E(x) = h \Phi\big( h(x) \cd 1\big) = h(x)$ establishing the relevant identity for $k = 0$. For $k \in \nn$, the most efficient approach is to apply the modular properties of the Haar state. By the discussions near \eqref{eq:modular} we get that $\te(z_N^*) = q^{2(1 - N)} z_N^*$ and hence that $z_N^k \te(z_N^*)^k = z_N^k (z_N^*)^k q^{2k(1-N)}$ belongs to $C(I_q)$. Since $E$ is a conditional expectation onto $C(I_q)$ and $h E = h$ we obtain that
    \[
    \begin{split}
    h_k(E(x)) & = h(a_k)^{-1} \cd h\big( (z_N^*)^k E(x) z_N^k\big) 
    = h(a_k)^{-1} \cd h\big( E(x) z_N^k \te(z_N^*)^k \big) \\
    & =  h(a_k)^{-1} \cd h\big( x z_N^k \te(z_N^*)^k \big) 
    = h_k(x) .
    \end{split}
    \]
    The final claim regarding the counit follows by checking that $\Phi(z_i z_j^*) = \de_{iN} \de_{jN} = \epsilon(z_i z_j^*) \cd 1$ for all $i,j \in \inn{N}$ so that we also get the identity $\Phi(x) = \epsilon(x) 1$. Indeed, this latter identity entails that $\epsilon( E(x) ) = h \Phi(x) = \epsilon(x)$.
\end{proof}

After these preparations we are ready to prove that the sequence of states $\{h_k\}_{k = 0}^\infty$ satisfies the desired convergence property.

\begin{prop}\label{p:weak*}
The sequence of states $\{h_k\}_{k = 0}^\infty$ on $C(\B CP_q^\ell)$ converges to the restricted counit $\epsilon : C(\B CP_q^\ell)\to \B C$ in the weak-$*$ topology. 
\end{prop}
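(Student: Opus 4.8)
The plan is to use Lemma \ref{l:inv} to push the entire problem onto the commutative unital $C^*$-subalgebra $C(I_q)$. Since $h_k \circ E = h_k$ and $\epsilon \circ E = \epsilon$, while $E$ takes values in $C(I_q)$, it suffices to prove $h_k(f) \to \epsilon(f)$ for every $f \in C(I_q)$. Here I would first identify the restricted counit: $\epsilon|_{C(I_q)}$ is the character of $C(I_q) \cong C(\Sp(y_\ell))$ determined by its value on the coordinate function $y_\ell$, and since $\epsilon$ is a $*$-homomorphism with $\epsilon(z_i) = \epsilon(u_{Ni}) = \de_{Ni} = 0$ for $i \in \inn{\ell}$, we get $\epsilon(y_\ell) = \sum_{i = 1}^\ell |\epsilon(z_i)|^2 = 0$. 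Thus $\epsilon|_{C(I_q)}$ is evaluation at the point $0 \in I_q$, and the goal becomes $h_k(f) \to f(0)$ for all $f \in C(I_q)$.

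Next I would rewrite $h_k$ on $C(I_q)$ in a form that exposes the collapse towards $0$. From the sphere relations \eqref{eq:defsphere} one has $z_N z_N^* = 1 - y_\ell$ and $z_N^* z_N = 1 - q^2 y_\ell$, and comparing $z_N(z_N^* z_N)$ with $(z_N z_N^*) z_N$ yields the commutation rule $y_\ell z_N = q^2 z_N y_\ell$. Iterating gives $y_\ell^n z_N^k = z_N^k (q^{2k} y_\ell)^n$, hence $f(y_\ell) z_N^k = z_N^k f(q^{2k} y_\ell)$ for all polynomials and, by continuity, for all $f \in C(I_q)$ (the functional calculus is legitimate since $q^{2k} I_q \su I_q$). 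Writing $a_k := (z_N^*)^k z_N^k$ this gives
\[
(z_N^*)^k f(y_\ell) z_N^k = a_k \cdot f(q^{2k} y_\ell) ,
\]
so that $h_k(f) = h\big( a_k f(q^{2k} y_\ell) \big) \cdot h(a_k)^{-1}$, with $a_k$ itself a positive element of $C(I_q)$ (explicitly $a_k = \prod_{j = 1}^k (1 - q^{2j} y_\ell)$, though only $a_k \geq 0$ will be used).

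Finally I would estimate directly. Setting $g_k := f(q^{2k} y_\ell) - f(0) 1$, the spectrum $\Sp(q^{2k} y_\ell) = q^{2k} I_q = \{ q^{2n} \mid n \geq k\} \cup \{0\}$ together with continuity of $f$ at $0$ forces $\| g_k \| = \sup_{n \geq k} |f(q^{2n}) - f(0)| \to 0$. Using $a_k \geq 0$, the fact that $a_k$ commutes with the self-adjoint $g_k$ (both lying in $C(I_q)$), and positivity of $h$, the operator inequality $-\| g_k \| a_k \leq a_k g_k \leq \| g_k \| a_k$ gives $|h(a_k g_k)| \leq \| g_k \| h(a_k)$. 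Faithfulness of the Haar state ensures $h(a_k) > 0$, since $z_N^* z_N = 1 - q^2 y_\ell \geq (1 - q^2) 1$ is invertible and hence $z_N^k \neq 0$. Therefore
\[
|h_k(f) - f(0)| = \frac{|h(a_k g_k)|}{h(a_k)} \leq \| g_k \| \longrightarrow 0 ,
\]
and splitting a general $f$ into self-adjoint parts completes the argument on $C(I_q)$; the reduction of the first paragraph then delivers weak-$*$ convergence $h_k \to \epsilon$ on all of $C(\B CP_q^\ell)$. The genuinely delicate points are the reduction via Lemma \ref{l:inv} and the rescaling identity $f(y_\ell) z_N^k = z_N^k f(q^{2k} y_\ell)$, which is precisely what drags the spectrum of the conjugated element down to $\{0\}$; once these are secured the remaining estimate is routine.
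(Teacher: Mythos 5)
Your proof is correct and follows essentially the same route as the paper: reduction to $C(I_q)$ via Lemma \ref{l:inv}, the commutation rule $y_\ell z_N = q^2 z_N y_\ell$, and a positivity estimate against $h(a_k)$ inside the commutative algebra $C(I_q)$. The only difference is cosmetic: the paper verifies convergence on the monomials $y_\ell^m$ (which are linearly dense in $C(I_q)$), obtaining the explicit bound $q^{2km} \cdot \| y_\ell^m \|$, whereas you handle a general $f \in C(I_q)$ in one stroke via the rescaling identity $f(y_\ell) z_N^k = z_N^k f(q^{2k} y_\ell)$ and the continuity of $f$ at $0$.
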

\begin{proof}
By Lemma \ref{l:inv} it is sufficient to show pointwise convergence on $C(I_q)$. Moreover, since $ \{ y_\ell^m  \mid m \in \B N_0 \} $ is linearly norm-dense in $C(I_q)$, we only need to show that $\lim_{k\to \infty} h_k(y_\ell^m)=\epsilon(y_\ell^m)$ for every $m \in \B N$ (so $m \neq 0$). For such $m$ it holds that $\epsilon(y_\ell^m)=0 $, and moreover for every $k \in \nn_0$ we have the estimate
\[
\begin{split}
  h_k(y_\ell^m) &= h\big( (z_N^*)^k y_\ell^m z_N^k \big)\cd h(a_k)^{-1} 
  = q^{2k m} \cd  h\big( (z_N^*)^k z_N^k y_\ell^m\big) \cd h(a_k)^{-1} \\
  & = q^{2k m} \cd  h\big( a_k^{1/2} y_\ell^m a_k^{1/2 }\big) \cd h(a_k)^{-1} \leq q^{2km} \cd \| y_\ell^m \|  , 
\end{split}
\]
where the third identity follows since both the positive element $a_k = (z_N^*)^k z_N^k$ and $y_\ell^m$ belong to $C(I_q)$ (which is a commutative $C^*$-algebra).
\end{proof}

\subsection{Monge-Kantorovich-convergence to the counit}
Let us for a little while assume that the pair $\big(\Lip_{D_q}(I_q),L_{D_q}^{\max}\big)$ is a compact quantum metric space (this will be established later on in Theorem \ref{thm:qms}). It then follows from Proposition \ref{p:weak*} that the restricted sequence of states $\big\{ h_k|_{C(I_q)}\big\}_{k = 0}^\infty$ converges to the restricted counit $\epsilon|_{C(I_q)}$ with respect to the Monge-Kantorovich distance $\mk_{L_{D_q}^{\max}}$ on the state space $S\big(C(I_q)\big)$.

In this subsection, we shall prove that the Monge-Kantorovich distance between $h_k$ and the restricted counit $\epsilon : C(\B CP_q^\ell) \to \B C$ actually agrees with the Monge-Kantorovich distance between $h_k|_{C(I_q)}$ and $\epsilon|_{C(I_q)}$. As a consequence we get that the sequence of states $\{ h_k\}_{k = 0}^\infty$ converges to the restricted counit $\epsilon : C(\B CP_q^\ell) \to \B C$ with respect to the Monge-Kantorovich distance on the state space $S\big(C(\B CP_q^\ell)\big)$.  

From now on we {\it no longer} assume that $\big(\Lip_{D_q}(I_q),L_{D_q}^{\max}\big)$ is a compact quantum metric space.

%

We first establish that the conditional expectation $E : C(\B C P_q^\ell)\to C(\B C P_q^\ell )$ is a contraction for the slip-norm $L_{D_q}^{\max} : \Lip_{D_q}( \B C P_q^\ell) \to [0,\infty)$.


\begin{lemma}\label{l:lipcon}
Let $x\in \Lip_{D_q}(\B CP_q^\ell)$. It holds that $E(x) \in \Lip_{D_q}(I_q)$ and we have the inequality
\[
L_{D_q}^{\max}( E(x)) \leq L_{D_q}^{\max}(x) . 
\]
\end{lemma}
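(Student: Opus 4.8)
The plan is to recognize the conditional expectation $E$ as a single instance of the slice-map construction from Lemma \ref{l:estfunc} and then read off the desired norm bound directly from that lemma. Recall that $E(x) = (1 \ot h\Phi)\de(x)$, where for $N \geq 3$ we take $A := C(SU_q(N-1))$ equipped with its Haar state $h$ and $\Phi$ as in \eqref{eq:defphi}, while for $N = 2$ we take $A := C(\B T)$ with its Haar state and $\Phi$ as in \eqref{eq:defphiII}. In both cases the Haar state is faithful, so the standing hypotheses of Lemma \ref{l:estfunc} are satisfied with $\mu := h$ and $\C A := \Phi\big(\C O(SU_q(N))\big)$.

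The key observation is that the slicing functional $h : A \to \cc$ is exactly one of the vector functionals $\varphi_{\xi,\eta}$ from Lemma \ref{l:estfunc}. Indeed, in the GNS representation $\pi : A \to \B B\big(L^2(A,\mu)\big)$ associated with $\mu = h$, the cyclic vector is the image $\io(1)$ of the unit under the canonical map $\io : A \to L^2(A,\mu)$, and for every $a \in A$ one has $h(a) = \inn{\io(1), \pi(a)\io(1)} = \varphi_{1,1}(a)$, where $1 \in \C A$ is regarded as a vector in $L^2(A,\mu)$. Since $1 = \Phi(1) \in \C A$, the choice $\xi = \eta = 1$ is legitimate, and $\| 1 \|_2^2 = h(1) = 1$.

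With this identification in hand I would apply Lemma \ref{l:estfunc} with $\xi = \eta = 1$. This immediately gives that $E(x) = (1 \ot \varphi_{1,1}\Phi)\de(x)$ belongs to $\Lip_{D_q}(\B CP_q^\ell)$ together with the estimate
\[
L_{D_q}^{\max}\big(E(x)\big) \leq \| 1 \|_2 \cd \| 1 \|_2 \cd L_{D_q}^{\max}(x) = L_{D_q}^{\max}(x) ,
\]
which is precisely the claimed inequality. To finish the membership assertion, I would invoke the fact already established above that the image of $E$ coincides with $C(I_q)$; combined with $E(x) \in \Lip_{D_q}(\B CP_q^\ell)$ this yields $E(x) \in C(I_q) \cap \Lip_{D_q}(\B CP_q^\ell) = \Lip_{D_q}(I_q)$.

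I do not expect a genuine obstacle here: the entire argument reduces to the clean identification of the Haar state $h$ with the distinguished vector state $\varphi_{1,1}$ of its own GNS representation, after which the inequality is a direct specialization of Lemma \ref{l:estfunc}. The only points deserving a moment's care are verifying that the unit really lies in $\C A$ (so that $\xi = \eta = 1$ is an admissible input) and checking that the degenerate case $N = 2$ is covered by taking the classical Haar state on $C(\B T)$, which is faithful and is likewise the vector state at the constant function $1$.
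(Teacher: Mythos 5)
Your proof is correct and follows essentially the same route as the paper: the authors also prove this lemma by setting $A = C(SU_q(\ell))$ (resp.\ $C(\B T)$ for $N=2$) with its faithful Haar state, observing that $E = (1 \ot \varphi_{1,1}\Phi)\de$, and invoking Lemma \ref{l:estfunc}. Your write-up is if anything slightly more explicit than the paper's, since you spell out the identification $h = \varphi_{1,1}$ via the GNS cyclic vector and the final step combining $E(x) \in \Lip_{D_q}(\B CP_q^\ell)$ with the fact that the image of $E$ is $C(I_q)$.
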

\begin{proof}
We put $A = C(SU_q(\ell))$ for $N\geq 3$ and $A = C(\B T)$ for $N = 2$ and in both cases we equip $A$ with the Haar state $h$ (which is indeed faithful, see \cite[Theorem 1.1]{Nag:HQG} for the case where $N \geq 3$). We then have the surjective unital $*$-homomorphism $\Phi : C(SU_q(N)) \to A$ from \eqref{eq:defphi} and \eqref{eq:defphiII}. The result of the lemma therefore follows from Lemma \ref{l:estfunc} by noting that $E = (1 \ot \varphi_{1,1} \Phi)\de$.
\end{proof}

We may now prove the main result of this subsection. 

\begin{prop}\label{p:mkrest}
For all $k \in \nn_0$ we have the identity
\[
\mk_{L_{D_q}^{\max}}\big(h_k,\epsilon|_{C(\B CP_q^\ell)} \big)=\mk_{L_{D_q}^{\max}}\big(h_k|_{C(I_q)},\epsilon|_{C(I_q)}\big) . 
\]
\end{prop}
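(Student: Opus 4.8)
The plan is to prove the two inequalities separately, the key point being that the conditional expectation $E$ fixes both states while not increasing the slip-norm. Throughout I abbreviate the two metrics as $\mk := \mk_{L_{D_q}^{\max}}$ on $S(C(\B CP_q^\ell))$ and on $S(C(I_q))$, writing $\mu = h_k$ and $\nu = \epsilon|_{C(\B CP_q^\ell)}$.

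First I would dispatch the inequality $\mk\big(h_k|_{C(I_q)},\epsilon|_{C(I_q)}\big) \leq \mk\big(h_k,\epsilon|_{C(\B CP_q^\ell)}\big)$. This is the easy direction: since $\Lip_{D_q}(I_q) = C(I_q) \cap \Lip_{D_q}(\B CP_q^\ell)$ is contained in $\Lip_{D_q}(\B CP_q^\ell)$ and the slip-norm on the $q$-Lipschitz functions is by definition the restriction of $L_{D_q}^{\max}$, every admissible test element $x \in \Lip_{D_q}(I_q)$ with $L_{D_q}^{\max}(x) \leq 1$ is also admissible for the supremum defining $\mk(h_k,\epsilon)$, and it contributes the same value $|h_k(x)-\epsilon(x)|$ in both cases. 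Taking the supremum over the smaller index set can only decrease it, which gives the claimed bound.

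For the reverse inequality I would take an arbitrary $x \in \Lip_{D_q}(\B CP_q^\ell)$ with $L_{D_q}^{\max}(x) \leq 1$ and run it through the conditional expectation $E$. By Lemma \ref{l:inv} both states are $E$-invariant, so $h_k(x) = h_k(E(x))$ and $\epsilon(x) = \epsilon(E(x))$, whence $|h_k(x) - \epsilon(x)| = |h_k(E(x)) - \epsilon(E(x))|$. By Lemma \ref{l:lipcon} the element $E(x)$ lies in $\Lip_{D_q}(I_q)$ and satisfies $L_{D_q}^{\max}(E(x)) \leq L_{D_q}^{\max}(x) \leq 1$, so $E(x) \in C(I_q)$ is an admissible test element for $\mk$ on $S(C(I_q))$. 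Recalling that $h_k$ and $\epsilon$ agree with their restrictions on the subalgebra $C(I_q)$, we obtain $|h_k(x)-\epsilon(x)| \leq \mk\big(h_k|_{C(I_q)},\epsilon|_{C(I_q)}\big)$, and taking the supremum over all such $x$ yields $\mk\big(h_k,\epsilon|_{C(\B CP_q^\ell)}\big) \leq \mk\big(h_k|_{C(I_q)},\epsilon|_{C(I_q)}\big)$.

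Combining the two inequalities finishes the proof. I do not anticipate any genuine obstacle, since the substantive work has already been isolated into Lemma \ref{l:inv} (invariance of both $h_k$ and the restricted counit under $E$) and Lemma \ref{l:lipcon} (the fact that $E$ is a contraction for $L_{D_q}^{\max}$ landing inside the $q$-Lipschitz functions). The only care required is the bookkeeping identifying $h_k$ and $\epsilon$ with their restrictions to $C(I_q)$ when declaring $E(x)$ an admissible competitor.
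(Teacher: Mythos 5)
Your proposal is correct and follows essentially the same route as the paper's own proof: the easy inequality via the inclusion $\Lip_{D_q}(I_q) \su \Lip_{D_q}(\B CP_q^\ell)$ together with the agreement of the slip-norms, and the reverse inequality by passing an arbitrary test element $x$ through the conditional expectation $E$, invoking Lemma \ref{l:inv} for the invariance of both states and Lemma \ref{l:lipcon} to certify $E(x)$ as an admissible competitor in $\Lip_{D_q}(I_q)$.
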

\begin{proof}
The inequality $\mk_{L_{D_q}^{\max}}\big(h_k|_{C(I_q)},\epsilon|_{C(I_q)}\big) \leq \mk_{L_{D_q}^{\max}}(h_k,\epsilon|_{C(\B CP_q^\ell)} )$ is satisfied since the $q$-Lipschitz functions $\Lip_{D_q}(I_q)$ are contained in the Lipschitz algebra $\Lip_{D_q}(\B CP_q^\ell)$ and the relevant slip-norms agree on the $q$-Lipschitz functions. To show the reverse inequality, we let $x\in \Lip_{D_q}(\B C P_q^\ell) $ with $L_{D_q}^{\max}(x) \leq 1$ be given. Applying Lemma \ref{l:inv} and Lemma \ref{l:lipcon}, we obtain the estimate
\[ 
\big|h_k(x) -\epsilon(x) \big| = \big|  h_k(E(x)) -\epsilon(E(x))\big| \leq \mk_{L_{D_q}^{\max}}(h_k|_{C(I_q)},\epsilon|_{C(I_q)}). 
\]
Hence, it follows that $\mk_{L_{D_q}^{\max}}\big(h_k,\epsilon|_{C(\B CP_q^\ell)}\big) \leq \mk_{L_{D_q}^{\max}}\big(h_k|_{C(I_q)},\epsilon|_{C(I_q)}\big)$. 
\end{proof}

\subsection{Finite dimensional approximations}
In this subsection we are going to prove the extension theorem. The main step is to show that if $\big( \Lip_{D_q}(I_q), L_{D_q}^{\max}\big)$ is a compact quantum metric space, then there exist arbitrarily precise finite dimensional approximations for the pair $\big( \Lip_{D_q}(\B CP_q^\ell), L_{D_q}^{\max}\big)$ (cf. Theorem \ref{t:approx}). For each $k\in \B N_0$ our prospective finite dimensional approximation is defined as the positive unital map
\[
\beta_k : C(\B CP_q^\ell ) \to C(SU_q(N)) \quad \beta_k(x) := ( h_k \otimes 1 ) \de(x),
\]
where we recall that $\de$ denotes the right coaction of quantum $SU(N)$ on quantum projective space, see Subsection \ref{ss:quasph}. Record that $\beta_0(x)=h(x)\cd 1$ since $h_0$ agrees with the restriction of the Haar state to $C(\B CP_q^\ell)$.

Our first result is that $\be_k$ has finite dimensional image. The proof relies on the Peter-Weyl decomposition of $\C O(SU_q(N))$ together with the Schur orthogonality relations, see Subsection \ref{ss:weylschur}.

\begin{lemma}\label{l:betafindim}
For all $k\in \nn_0$ it holds that $\beta_k : C(\B CP_q^\ell) \to C(SU_q(N))$ has finite dimensional image. 
\end{lemma}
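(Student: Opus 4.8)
The plan is to show that $\beta_k$ factors through a finite dimensional subspace of $\C O(SU_q(N))$ by exploiting the Peter-Weyl decomposition together with the Schur orthogonality relations. The key observation is that $\beta_k(x) = (h_k \ot 1)\de(x)$ is a slice of the right coaction by the state $h_k$, and since $h_k$ is built from the Haar state via $h_k(\cd) = h(a_k)^{-1} \cd h\big((z_N^*)^k (\cd) z_N^k\big)$, the value $\beta_k(x)$ is governed by how the Haar state pairs against the individual Peter-Weyl blocks $\C C(\al)$.

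First I would note that since $\beta_k$ is linear and bounded and the coordinate algebra $\C O(\B CP_q^\ell)$ is norm-dense in $C(\B CP_q^\ell)$, it suffices to show that $\beta_k\big(\C O(\B CP_q^\ell)\big)$ is contained in a fixed finite dimensional subspace of $\C O(SU_q(N))$; the image of the full $C^*$-algebra then lies in the norm-closure of that subspace, which is finite dimensional and hence already closed. So I reduce the problem to the level of coordinate algebras.

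Next I would decompose along the Peter-Weyl blocks. For $x \in \C O(\B CP_q^\ell) \su \C O(SU_q(N))$, write $\de(x) = \sum x_{(1)} \ot x_{(2)}$ using the coproduct, so that $\beta_k(x) = \sum h_k(x_{(1)}) \cd x_{(2)}$. Using the expansion of $x$ in matrix coefficients $v_{ij}^\al$ and the fact that $\De$ maps $\C C(\al)$ into $\C C(\al) \ot \C C(\al)$, the element $\beta_k(x)$ lands inside $\bigoplus_\al \C C(\al)$ where the sum ranges only over those $\al$ for which $h_k$ is nonzero on $\C C(\al)$. The crucial point is then to bound the number of such $\al$. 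Here the Schur orthogonality relations enter: $h(w^* v) = 0$ whenever $w,v$ lie in inequivalent blocks, and since $h_k(y) = h(a_k)^{-1} h\big((z_N^*)^k y z_N^k\big)$, the functional $h_k$ can only detect blocks $\al$ whose matrix coefficients have nontrivial Haar-pairing with the fixed element $z_N^k (z_N^*)^k$ (up to the block structure of multiplication by $z_N^k$ and $(z_N^*)^k$). Since $z_N = u_{N,\cd}$ is a matrix coefficient of the fundamental corepresentation, multiplication by $z_N^k$ and $(z_N^*)^k$ shifts blocks by a bounded amount (tensoring with fixed powers of the fundamental corepresentation and its conjugate), so only finitely many blocks $\al$ can contribute, each of which is itself finite dimensional.

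The main obstacle I anticipate is making the block-counting argument precise: one must verify that $h_k$ annihilates all but finitely many Peter-Weyl blocks. The cleanest route is to observe that $z_N^k \, \C C(\al) \, (z_N^*)^k$ is contained in a finite sum of blocks (because $z_N$ and $z_N^*$ are matrix coefficients of a single finite dimensional corepresentation, and the fusion rules for tensoring with a fixed corepresentation produce only finitely many summands), and then invoke Schur orthogonality to kill the pairing of the Haar state against any block outside this finite range. Once this finiteness is in hand, the image $\beta_k\big(\C O(\B CP_q^\ell)\big)$ is contained in a finite sum $\bigoplus_{\al \in F_k} \C C(\al)$ for a finite set $F_k$ of equivalence classes, which is a finite dimensional subspace of $\C O(SU_q(N))$, completing the argument after passing to the norm-closure as in the first step.
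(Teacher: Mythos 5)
Your proposal is correct and shares the paper's overall skeleton -- reduce by density to the coordinate algebra, decompose along the Peter--Weyl blocks $\C C(\al)$, and show that $h_k$ (more precisely its evident extension $H_k(x) := h\big((z_N^*)^k x z_N^k\big)\cd h(a_k)^{-1}$ to all of $C(SU_q(N))$, which you use implicitly when pairing against individual blocks) annihilates all but finitely many blocks -- but the mechanism you use for this crucial finiteness step is genuinely different. The paper's trick is the modular automorphism: using $h(xy) = h(y\te(x))$ one rewrites $H_k(x) = h(w_k^* x)\cd h(a_k)^{-1}$ with $w_k := z_N^k\, \te\big((z_N^k)^*\big)$ a single fixed element of $\C O(SU_q(N))$, and then Schur orthogonality immediately kills every block outside the finite set of blocks meeting $w_k$. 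Your route keeps the two-sided expression $h\big((z_N^*)^k\, y\, z_N^k\big)$ and argues via fusion: $(z_N^*)^k$ and $z_N^k$ are matrix coefficients of $\bar u^{\ot k}$ and $u^{\ot k}$, so $(z_N^*)^k\, \C C(\al)\, z_N^k$ lies in the blocks of $\bar u^{\ot k} \ot v^\al \ot u^{\ot k}$, and the Haar state sees only the trivial block. To complete this -- and it is exactly the point you flag as the main obstacle -- you need Frobenius reciprocity for corepresentations of compact quantum groups: the trivial corepresentation occurs in $\bar u^{\ot k} \ot v^\al \ot u^{\ot k}$ only if $v^\al$ occurs in the fixed finite dimensional corepresentation $u^{\ot k} \ot \ov{u^{\ot k}}$ (up to conjugation and reordering), which admits only finitely many irreducible summands; this is standard but lies outside the preliminaries the paper sets up, whereas the modular automorphism and Schur orthogonality are both recalled there. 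Note also that your parenthetical hedge (``up to the block structure'') dodges a real pitfall: one cannot simply pair against $z_N^k(z_N^*)^k$, since $h$ is not a trace -- converting the two-sided expression into a one-sided pairing is precisely what the modular automorphism accomplishes, and since $\te$ preserves each block, both arguments single out the same finite family of surviving blocks. In short, your argument works and is conceptually appealing (the surviving blocks are those within fusion-distance $k$ of the trivial one), at the cost of heavier representation-theoretic input; the paper's modular trick is shorter and self-contained.
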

\begin{proof}
  We are in fact going to prove a stronger result, namely that a certain extension of $\be_k$ has finite dimensional image. Recall from Subsection \ref{ss:weakstar} that $a_k := (z_N^*)^k z_N^k$ and define the state $H_k : C(SU_q(N)) \to \B C$ by putting $H_k(x) := h((z_N^*)^k x z_N^k ) \cd h(a_k)^{-1}$ for all $x \in C(SU_q(N))$. We may then introduce the positive unital map
  \[
B_k := (H_k \ot 1) \De : C(SU_q(N)) \to C(SU_q(N)) .
\]
Since the state $h_k$ agrees with the restriction of $H_k$ to $C(\B CP_q^\ell)$ we obtain that $B_k(x) = \be_k(x)$ for all $x \in C(\B CP_q^\ell)$. Hence, to establish the result of the lemma it suffices to show that $B_k$ has finite dimensional image.
%

Recall that $\theta : \C O(SU_q(N))\to \C O(SU_q(N))$ denotes the modular automorphism of the Haar state. Putting $w_k := z_N^k \te( (z_N^k)^*)$ we then obtain that
\[
H_k(x) =  h\big((z_N^*)^k x z_N^k \big) \cd h(a_k)^{-1}  = h\big(\te^{-1}(z_N^k)\cd (z_N^*)^k x \big) \cd h(a_k)^{-1}
=  h(w_k^* x) \cd h(a_k)^{-1}
\]
for all $x \in \C O(SU_q(N))$. Using the Peter-Weyl decomposition we may choose a finite subset $F \su \widehat{\C O(SU_q(N))}$ such that $w_k \in \sum_{\al \in F} \C C(\al)$. For every $\be \in \widehat{\C O(SU_q(N))}\sem F$ and $x \in \C C(\be)$ it holds that $\De(x) \in \C C(\be) \ot \C C(\be)$ and the Schur orthogonality relations then imply that
\[
B_k(x) = (H_k \ot 1)\De(x) = h(a_k)^{-1} \cd h(w_k^* x_{(1)}) \cd x_{(2)} = 0 .
\]
Applying the Peter-Weyl decomposition again we can now conclude that
\[
B_k\big( \C O(SU_q(N)) \big) = B_k\big( \sum_{\al \in \widehat{\C O(SU_q(N))}} \C C(\al) \big) = B_k\big( \sum_{\al \in F} \C C(\al) \big) .
\]
This shows that $B_k\big( \C O(SU_q(N))\big)$ is finite dimensional and by continuity and density we infer that the image of $B_k : C(SU_q(N)) \to C(SU_q(N))$ must be finite dimensional too.
\end{proof}

\begin{prop}\label{p:betaest}
Let $x\in \Lip_{D_q} (\B C P_q^\ell)$ and let $k\in \B N_0$. We have the inequality
\[
\| x-\beta_k(x) \| \leq \mk_{L_{D_q}^{\max}}\big(h_k,\epsilon|_{C(\B CP_q^\ell)}\big) \cdot L_{D_q}^{\max}(x) . 
\]
\end{prop}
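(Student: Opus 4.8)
The plan is to realize both $x$ and $\beta_k(x)$ as left-leg slices of the coaction $\delta(x) \in C(\B CP_q^\ell) \ot C(SU_q(N))$, and then to estimate the resulting $C^*$-norm on $C(SU_q(N))$ by pairing with vector functionals coming from the faithful GNS-representation of the Haar state.

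First I would record the relevant counit identity. Since $\delta$ is the restriction of the coproduct $\De$ and $\De(x) \in C(\B CP_q^\ell) \ot C(SU_q(N))$ for $x \in C(\B CP_q^\ell)$, the counit axiom $(\epsilon \ot 1)\De = \id$ yields $(\epsilon|_{C(\B CP_q^\ell)} \ot 1)\delta(x) = x$. As $\beta_k(x) = (h_k \ot 1)\delta(x)$, subtracting gives the key identity
\[
x - \beta_k(x) = \big( (\epsilon|_{C(\B CP_q^\ell)} - h_k) \ot 1\big)\delta(x) \in C(SU_q(N)).
\]
Next, since the Haar state is faithful, the GNS-representation $\pi : C(SU_q(N)) \to \B B\big(L^2(SU_q(N))\big)$ is isometric, so $\|x - \beta_k(x)\|$ equals the supremum of $|\inn{\io(b), \pi(x - \beta_k(x))\io(c)}|$ taken over $b,c \in \C O(SU_q(N))$ with $\|\io(b)\|_2, \|\io(c)\|_2 \leq 1$; this is legitimate because $\io\big(\C O(SU_q(N))\big)$ is dense in $L^2(SU_q(N))$.

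Fixing such $b,c$ and writing $\varphi := \varphi_{\io(b),\io(c)}$ for the associated vector functional, I would use the key identity together with the elementary commutation of slices on the two legs to rewrite
\[
\inn{\io(b), \pi(x - \beta_k(x))\io(c)} = \big((\epsilon|_{C(\B CP_q^\ell)} - h_k) \ot \varphi\big)\delta(x) = (\epsilon - h_k)(y),
\]
where $y := (1 \ot \varphi)\delta(x) \in C(\B CP_q^\ell)$ is the right-leg slice. The heart of the argument is then to control $y$. Applying Lemma \ref{l:estfunc} with $A = C(SU_q(N))$, $\mu = h$ and $\Phi = \id$ (so that $\C A = \C O(SU_q(N))$ and $\varphi_{\io(b),\io(c)}\Phi = \varphi$), I obtain that $y$ lies in $\Lip_{D_q}(\B CP_q^\ell)$ with $L_{D_q}^{\max}(y) \leq \|\io(b)\|_2 \cd \|\io(c)\|_2 \cd L_{D_q}^{\max}(x) \leq L_{D_q}^{\max}(x)$.

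By the defining property of the Monge--Kantorovich metric this gives $|(\epsilon - h_k)(y)| \leq \mk_{L_{D_q}^{\max}}\big(h_k, \epsilon|_{C(\B CP_q^\ell)}\big) \cd L_{D_q}^{\max}(y) \leq \mk_{L_{D_q}^{\max}}\big(h_k, \epsilon|_{C(\B CP_q^\ell)}\big) \cd L_{D_q}^{\max}(x)$. Taking the supremum over all admissible $b,c$ then yields the claimed inequality. I expect the main obstacle to be the bookkeeping ensuring that the right-leg slice $y$ remains in the Lipschitz algebra with slip-norm bounded by that of $x$; this is exactly the equivariance estimate of Lemma \ref{l:estfunc}, and once it is invoked the remaining ingredients are the counit identity, the faithful GNS-reduction of the $C^*$-norm, and the definition of $\mk_{L_{D_q}^{\max}}$.
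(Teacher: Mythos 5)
Your proposal is correct and follows essentially the same route as the paper's proof: both rewrite $x-\beta_k(x)$ via the counit identity as $\big((\epsilon|_{C(\B CP_q^\ell)}-h_k)\ot 1\big)\de(x)$, swap the slices to land on $(\epsilon|_{C(\B CP_q^\ell)}-h_k)\big((1\ot\varphi_{\xi,\eta})\de(x)\big)$, invoke Lemma \ref{l:estfunc} with $\Phi = \id$ and $\mu = h$ to bound the slip-norm of the right-leg slice, and recover the $C^*$-norm from the vector functionals $\varphi_{\xi,\eta}$ with $\xi,\eta$ ranging over the dense subspace $\io\big(\C O(SU_q(N))\big)$ of $L^2(SU_q(N))$, using faithfulness of the Haar state.
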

\begin{proof}
Let $\xi, \eta \in \C O(SU_q(N))$ be two vectors with $\| \xi \|_2$ and $\| \eta \|_2$ dominated by one (inside the Hilbert space $L^2(SU_q(N))$). Since $x=(\epsilon|_{C(\B CP_q^\ell)} \otimes 1)\de(x)$, we have the identities
\[
\begin{split}
\varphi_{\xi,\eta}(x-\beta_k(x)) & = \varphi_{\xi,\eta} \big( (\epsilon|_{C(\B CP_q^\ell)}-h_k) \ot 1 \big) \de(x) \\
& =(\epsilon|_{C(\B CP_q^\ell)}-h_k)( 1 \ot \varphi_{\xi,\eta} ) \de(x).
\end{split}
\]
We may now apply Lemma \ref{l:estfunc} to the case where $\Phi$ is the identity operator on $C(SU_q(N))$ (and $\mu$ is the Haar state $h$). Indeed, an application of this lemma and the definition of the Monge-Kantorovich metric yield the estimates:
\[
\begin{split}
\big| \varphi_{\xi,\eta} (x-\beta_k(x)) \big|
& \leq  \mk_{L_{D_q}^{\max}}\big(h_k,\epsilon|_{C(\B CP_q^\ell)}\big) \cdot L_{D_q}^{\max}\big((1 \ot \varphi_{\xi,\eta})\de(x) \big) \\
& \leq \mk_{L_{D_q}^{\max}}\big(h_k,\epsilon|_{C(\B CP_q^\ell)}\big) \cdot L_{D_q}^{\max}(x) .
\end{split}
\]
The result of the proposition then follows since $C(SU_q(N))$ is represented faithfully on $L^2(SU_q(N))$, see the discussion towards the end of Subsection \ref{sec:suqn}. Indeed, for every $y \in C(SU_q(N))$ we get that 
\[
\| y \| = \sup\big\{ | \varphi_{\xi,\eta}(y) | \, \mid \xi,\eta \in \C O(SU_q(N))  \, , \, \, \| \xi \|_2, \| \eta \|_2 \leq 1 \big\} . \qedhere
\]
\end{proof}

We are now prepared for the proof of the extension theorem. 

\begin{theorem}\label{t:extlem}
It holds that $\big(\Lip_{D_q}(\B CP_q^\ell),L_{D_q}^{\max}\big)$ is a compact quantum metric space if and only if $\big(\Lip_{D_q}(I_q),L_{D_q}^{\max}\big)$ is a compact quantum metric space.  
\end{theorem}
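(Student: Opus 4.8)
The plan is to verify, for each of the two implications, the equivalent characterization of compact quantum metric spaces given in Theorem~\ref{t:approx}, namely finite diameter together with the existence of arbitrarily precise $\ep$-approximations. The whole argument exploits that $C(I_q)\su C(\B CP_q^\ell)$ is a unital $C^*$-subalgebra and that, by definition, the $q$-Lipschitz functions $\Lip_{D_q}(I_q)=C(I_q)\cap\Lip_{D_q}(\B CP_q^\ell)$ and their slip-norm are literally the restrictions of the corresponding data on $\B CP_q^\ell$.

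For the implication that a compact quantum metric structure on $\B CP_q^\ell$ descends to $I_q$, I would argue by restriction. Since the inclusion $C(I_q)\hookrightarrow C(\B CP_q^\ell)$ is isometric, the quotient norms on $C(I_q)/\cc 1$ and $C(\B CP_q^\ell)/\cc 1$ coincide on elements of $C(I_q)$; hence a finite diameter bound $\|[x]\|_{C(\B CP_q^\ell)/\cc 1}\leq C\cd L_{D_q}^{\max}(x)$ restricts at once to a finite diameter bound on $\Lip_{D_q}(I_q)$. Moreover, given any $\ep$-approximation $(\iota,\Phi)$ of $\big(\Lip_{D_q}(\B CP_q^\ell),L_{D_q}^{\max}\big)$ valued in a unital $C^*$-algebra $B$, the restrictions $\iota|_{C(I_q)}$ and $\Phi|_{C(I_q)}$ are again unital, respectively isometric and positive, the image of $\Phi|_{C(I_q)}$ stays finite dimensional, and the estimate $\|\iota(y)-\Phi(y)\|\leq\ep\cd L_{D_q}^{\max}(y)$ persists for $y\in\Lip_{D_q}(I_q)\su\Lip_{D_q}(\B CP_q^\ell)$. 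Theorem~\ref{t:approx} then delivers the structure on $I_q$.

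The substantial implication is the converse, where I assume $\big(\Lip_{D_q}(I_q),L_{D_q}^{\max}\big)$ is a compact quantum metric space and build approximations for the larger algebra out of the maps $\be_k$. The first step is to upgrade the weak-$*$ convergence of Proposition~\ref{p:weak*} to convergence in the Connes metric: the restrictions $h_k|_{C(I_q)}$ converge weak-$*$ to $\epsilon|_{C(I_q)}$ by Proposition~\ref{p:weak*}, the assumed metrization property on $I_q$ turns this into $\mk_{L_{D_q}^{\max}}\big(h_k|_{C(I_q)},\epsilon|_{C(I_q)}\big)\to 0$, and Proposition~\ref{p:mkrest} transports this to $\mk_{L_{D_q}^{\max}}\big(h_k,\epsilon|_{C(\B CP_q^\ell)}\big)\to 0$. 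Finite diameter for $\B CP_q^\ell$ then comes from the case $k=0$ of Proposition~\ref{p:betaest}: since $\be_0(x)=h(x)\cd 1\in\cc 1$, we obtain $\|[x]\|_{C(\B CP_q^\ell)/\cc 1}\leq\|x-h(x)1\|\leq\mk_{L_{D_q}^{\max}}\big(h_0,\epsilon\big)\cd L_{D_q}^{\max}(x)$, and the constant is finite because $I_q$ has finite diameter (again via Proposition~\ref{p:mkrest}). For the $\ep$-approximations, given $\ep>0$ I would pick $k$ with $\mk_{L_{D_q}^{\max}}(h_k,\epsilon)\leq\ep$ and take $B=C(SU_q(N))$, with $\iota$ the isometric unital inclusion $C(\B CP_q^\ell)\hookrightarrow C(SU_q(N))$ and $\Phi=\be_k$, which is unital, positive and of finite dimensional image by Lemma~\ref{l:betafindim}; Proposition~\ref{p:betaest} then gives exactly $\|\iota(x)-\be_k(x)\|\leq\ep\cd L_{D_q}^{\max}(x)$, so $(\iota,\be_k)$ is an $\ep$-approximation and Theorem~\ref{t:approx} concludes.

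The main obstacle is concentrated in the metric convergence $\mk_{L_{D_q}^{\max}}(h_k,\epsilon)\to 0$ of the converse direction; everything downstream is a formal application of Theorem~\ref{t:approx}. What makes this step tractable is precisely the reduction to the commutative subalgebra $C(I_q)$ furnished by the conditional expectation in Proposition~\ref{p:mkrest}, which isolates the genuine analytic content in the separately established compact quantum metric structure of the quantized interval $I_q$.
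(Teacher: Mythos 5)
Your proposal is correct and follows essentially the same route as the paper: the forward direction by restricting the finite diameter bound and $\ep$-approximations to the subalgebra $C(I_q)$ via Theorem~\ref{t:approx}, and the converse by upgrading Proposition~\ref{p:weak*} to Monge--Kantorovich convergence through the assumed metrization on $I_q$ and Proposition~\ref{p:mkrest}, then using $(\iota,\be_{k})$ with Lemma~\ref{l:betafindim} and Proposition~\ref{p:betaest} for the approximations and the $k=0$ case for finite diameter. The only difference is cosmetic: you spell out the restriction argument that the paper compresses into a single sentence.
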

\begin{proof}
Since $\Lip_{D_q}(I_q)$ is a unital $*$-subalgebra of $\Lip_{D_q}(\B CP_q^\ell)$ and the two slip-norms agree on $\Lip_{D_q}(I_q)$, it follows from Theorem \ref{t:approx} that if $\big(\Lip_{D_q}(\B CP_q^\ell),L_{D_q}^{\max}\big)$ is a compact quantum metric space, then this holds for $\big(\Lip_{D_q}(I_q),L_{D_q}^{\max}\big)$ as well. 
  
  Suppose that $\big( \Lip_{D_q}(I_q),L_{D_q}^{\max} \big)$ is a compact quantum metric space. Then we get from Proposition \ref{p:weak*} that $\lim_{k\to \infty} \mk_{L_{D_q}^{\max}}\big(h_k|_{C(I_q)},\epsilon|_{C(I_q)}\big)  = 0$ and hence by Proposition \ref{p:mkrest} that also $\lim_{k\to \infty} \mk_{L_{D_q}^{\max}}\big(h_k,\epsilon|_{C(\B CP_q^\ell)}\big)=0$. 

  Let $\ep > 0$ be given. Choose a $k_0 \in  \B N_0$ such that $\mk_{L_{D_q}^{\max}}(h_{k_0},\epsilon|_{C(\B CP_q^\ell)}) \leq \ep$. If we let $i : C(\B CP_q^\ell)\to C(SU_q(N))$ denote the inclusion, it then follows from Lemma \ref{l:betafindim} and Proposition \ref{p:betaest} that $\big(i,\beta_{k_0}\big)$ is an $\ep$-approximation of $\big(\Lip_{D_q}(\B CP_q^\ell),L_{D_q}^{\max}\big)$.
  
  By Theorem \ref{t:approx}, it now only remains to show that $\big(\Lip_{D_q}(\B CP_q^\ell),L_{D_q}^{\max}\big)$ has finite diameter. To this end, we first notice that $\mk_{L_{D_q}^{\max}}(h_0,\epsilon|_{C(\B CP_q^\ell)}) <\infty$. This is a consequence of Proposition \ref{p:mkrest} together with our assumption that $\big( \Lip_{D_q}(I_q),L_{D_q}^{\max} \big)$ is a compact quantum metric space. The finite diameter condition for $\big(\Lip_{D_q}(\B CP_q^\ell),L_{D_q}^{\max}\big)$ then follows from Proposition \ref{p:betaest}. Indeed, we get the estimates
\[
\big\| [x] \big\|_{C(\B CP_q^\ell)/\cc 1} \leq \| x-h_0(x)\cd 1 \| = \| x-\beta_0(x) \|
\leq \mk_{L_{D_q}^{\max}}(h_0,\epsilon|_{C(\B CP_q^\ell)}) \cd L_{D_q}^{\max}(x) ,
\]
for all $x \in \Lip_{D_q}(\B CP_q^\ell)$.
\end{proof}

\section{The quantum metric structure of the quantized interval}
Let us again fix an integer $M \in \zz$ and recall that $M$ is relevant in the construction of the unital spectral triple $\big( \Lip_{D_q}(\B CP_q^\ell), L^2(\Om_M,h), D_q\big)$. Remark that this unital spectral triple restricts to a unital spectral triple over the $q$-Lipschitz functions namely $\big( \Lip_{D_q}(I_q), L^2(\Om_M,h), D_q\big)$.

In view of Theorem \ref{t:extlem}, our aim is now to prove that the pair $\big(\Lip_{D_q}(I_q),L_{D_q}^{\max} \big)$ is a compact quantum metric space (so that the unital spectral triple over the $q$-Lipschitz functions becomes a spectral metric space). As a first step in this direction, we compute derivatives of the indicator functions associated to isolated points in $I_q$, see \eqref{eq:mproj}.
%

\subsection{Derivatives of indicator functions}\label{ss:derind}
For each $k \in \{0,1,\ldots,\ell\}$ we recall from Section \ref{s:specgeo} that $L^2(\Om_M^k,h)$ denotes the closure of the twisted antiholomorphic forms $\Om_M^k$ inside the Hilbert space tensor product $L^2(SU_q(N)) \ot \La^k(\B C^\ell)$. 

For each $x \in \Lip_{D_q}(\B CP_q^\ell)$ we define the bounded operator $\Na(x) : L^2(\Om_M^0,h) \to L^2(\Om_M^1,h)$ by putting
\[
\Na(x)(\xi) := -d(x)(\xi) \quad \T{for all } \xi \in L^2(\Om_M^0,h).
\]
It then holds that $\Na : \Lip_{D_q}(\B CP_q^\ell) \to \B B\big(L^2(\Om_M^0,h),L^2(\Om_M^1,h)\big)$ is a closable derivation and we apply the notation $\ov{\Na}$ for the closure. When applying $\Na$ to a $q$-Lipschitz function $f$ we sometimes refer to the resulting operator as the \emph{gradient} of $f$.

%

In this subsection, we shall prove that the projection $p_m \in C(I_q)$ belongs to the domain of $\ov{\Na}$ for all $m \in \nn_0$ and compute the derivative $\ov{\Na}(p_m)$. We recall here that the projection $p_m$ was introduced in Subsection \ref{ss:quasph} and corresponds to the isolated point $q^{2m}$ in the spectrum of $y_\ell$.

For each $i \in \inn{\ell}$ we start out by defining the operation
\[
\Na_i := (-q)^{i - N} d_{F_i \clc F_\ell} : \C O(S_q^{2\ell +1}) \to \C O(SU_q(N)) .
\]
It is then relevant to notice that $\Na_i$ is a twisted derivation in the sense that
\[
\Na_i(x y) = \Na_i(x) d_{K_\ell^{-1}}(y) + d_{K_\ell}(x) \Na_i(y)
\]
for all $x,y \in \C O(S_q^{2\ell + 1})$, see \eqref{eq:homsph} and \eqref{eq:twileib}. Moreover, using \eqref{eq:calcrule} we may specify how $\Na_i$ behaves with respect to the adjoint operation:
\[
\Na_i(x^*) = (-q)^{i - N}d_{F_i \clc F_\ell}(x^*) = (-q)^{i - N}d_{S( E_\ell \clc E_i)}(x)^* = d_{E_i \clc E_\ell}(x)^*  .
\]
For every $r \in \inn{N}$ we also record the following formulae which are consequences of \eqref{eq:actgener}:
\begin{equation}\label{eq:nabvan}
\Na_i(z_r) = 0 \quad \T{ and } \quad \Na_i(z_r^*) = d_{E_i \clc E_\ell}(u_{Nr})^* = (-q)^{i-N} u_{ir}^* .
\end{equation}

By Lemma \ref{l:partial} the relationship between the derivation $\Na$ and the above twisted derivations can be clarified. Indeed, we have that
\begin{equation}\label{eq:nabcoord}
\Na(x)(\xi) = \sum_{i = 1}^\ell \Na_i(x)(\xi) \ot e_i \quad \T{for all } \xi \in L^2(\Om_M^0,h) \, \, \T{ and } \, \, \, x \in \C O(\B CP_q^\ell) .
\end{equation}

In the next three short lemmas we present a couple of commutation relations involving the twisted derivation $\Na_i$ and the generators for the coordinate algebra of the quantum sphere. We recall that $x_s := z_s z_s^*$ for all $s \in \inn{N}$ and that $y_r := \sum_{s = 1}^r x_s$ for all $r \in \inn{N}$.

\begin{lemma}\label{l:znab}
  For $i \in \inn{\ell}$ and $s,r \in \inn{N}$ with $s \neq r$ we have the identity
  \[
  z_s \Na_i(z_r^*) = \Na_i(z_r^*) z_s .
\]
\end{lemma}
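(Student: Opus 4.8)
The plan is to avoid computing the cross commutator $[z_s, u_{ir}^*]$ directly inside $\C O(SU_q(N))$ — which, via the explicit formula $\Na_i(z_r^*) = (-q)^{i-N} u_{ir}^*$ from \eqref{eq:nabvan}, would amount to checking $[u_{Ns}, u_{ir}^*] = 0$ and would require the (messy) FRT relations involving adjoints. Instead I would obtain the identity by \emph{differentiating} a commutation relation that already holds on the quantum sphere. The crucial inputs are that $\Na_i$ is a twisted derivation on $\C O(S_q^{2\ell+1})$ (by \eqref{eq:homsph} and \eqref{eq:twileib}) and that $\Na_i(z_s) = 0$ for every $s$ by \eqref{eq:nabvan}.

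Concretely, the sphere relation $z_i^* z_j = q z_j z_i^*$ for $i \neq j$ from \eqref{eq:defsphere}, read with $i = r$ and $j = s$, gives $z_s z_r^* = q^{-1} z_r^* z_s$ for $s \neq r$. I would apply $\Na_i$ to both sides. Using the twisted Leibniz rule and $\Na_i(z_s) = 0$, the left-hand side collapses to
\[
\Na_i(z_s z_r^*) = \Na_i(z_s)\, d_{K_\ell^{-1}}(z_r^*) + d_{K_\ell}(z_s)\, \Na_i(z_r^*) = d_{K_\ell}(z_s)\, \Na_i(z_r^*),
\]
while the right-hand side becomes $q^{-1}\Na_i(z_r^*)\, d_{K_\ell^{-1}}(z_s)$. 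The remaining task is to evaluate the two automorphisms on $z_s = u_{Ns}$. By \eqref{eq:actgener}, the eigenvalue $d_{K_\ell}(u_{Ns}) = q^{\frac{1}{2}\de_{N,\ell} - \frac{1}{2}\de_{N,\ell+1}} u_{Ns} = q^{-1/2} z_s$ depends only on the row index $N$, hence is the same for all columns $s$, and consequently $d_{K_\ell^{-1}}(z_s) = q^{1/2} z_s$.

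Substituting these scalars, the left-hand side equals $q^{-1/2} z_s \Na_i(z_r^*)$ and the right-hand side equals $q^{-1}\cdot q^{1/2}\,\Na_i(z_r^*) z_s = q^{-1/2}\Na_i(z_r^*) z_s$; cancelling the common factor $q^{-1/2}$ yields exactly $z_s \Na_i(z_r^*) = \Na_i(z_r^*) z_s$. There is no serious obstacle here: the entire argument is a single application of the twisted Leibniz rule together with two eigenvalue computations. The only genuine insight is to route the proof through $\Na_i$ applied to a sphere relation rather than attacking the adjoint commutation relation head-on, and the one point to record carefully is that the $K_\ell$-eigenvalue of $z_s$ is column-independent, so the conclusion holds uniformly for all $s \neq r$.
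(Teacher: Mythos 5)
Your proof is correct and is essentially identical to the paper's own argument: the paper also applies the twisted Leibniz rule for $\Na_i$ to the sphere relation $z_s z_r^* = q^{-1} z_r^* z_s$, uses $\Na_i(z_s) = 0$, and evaluates the eigenvalues $d_{K_\ell^{\pm 1}}(z_s) = q^{\mp 1/2} z_s$. The only difference is presentational (the paper writes it as a single chain of equalities rather than differentiating both sides separately).
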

\begin{proof}
This follows by using that $\Na_i$ is a twisted derivation together with \eqref{eq:actgener}, \eqref{eq:nabvan} and \eqref{eq:defsphere}:
  \[
  \begin{split}
    q^{-1/2} z_s \Na_i(z_r^*)
    & = d_{K_\ell}(z_s) \Na_i(z_r^*) = \Na_i(z_s z_r^*) = q^{-1} \Na_i(z_r^* z_s) \\
    & = q^{-1} \Na_i(z_r^*) d_{K_\ell^{-1}}(z_s) = q^{-1/2} \Na_i(z_r^*) z_s . \qedhere
  \end{split}
  \]
\end{proof}


\begin{lemma}\label{l:adjnab}
  For $i \in \inn{\ell}$ and $s,r \in \inn{N}$ with $s < r$ we have the identities
  \[
z_s^* \Na_i(z_r^*) = \Na_i(z_r^*) z_s^* \, \, \mbox{ and } \, \, \, 
x_s \Na_i(x_r) = q^2 \Na_i(x_r) x_s .
  \]
\end{lemma}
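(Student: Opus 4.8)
The plan is to reduce both identities to the defining quadratic relations among the generators $u_{ij}$, combined with the twisted derivation property of $\Na_i$, rather than applying $\Na_i$ directly to the relevant commutation relation. The latter strategy worked in Lemma \ref{l:znab} only because $\Na_i(z_s) = 0$; here $\Na_i(z_s^*) = (-q)^{i-N} u_{is}^*$ is nonzero, so applying $\Na_i$ to $z_r^* z_s^* = q z_s^* z_r^*$ produces genuine cross terms that do not collapse on their own, and I would avoid that route.

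For the first identity I would recall from \eqref{eq:nabvan} that $\Na_i(z_r^*) = (-q)^{i-N} u_{ir}^*$ and that $z_s^* = u_{Ns}^*$, so that the claim $z_s^* \Na_i(z_r^*) = \Na_i(z_r^*) z_s^*$ is equivalent to the commutation $u_{Ns}^* u_{ir}^* = u_{ir}^* u_{Ns}^*$, and hence, after taking adjoints, to $u_{ir} u_{Ns} = u_{Ns} u_{ir}$. Since $i \in \inn{\ell}$ forces $i < N$ and the hypothesis gives $s < r$, the relation $[u_{il}, u_{jk}] = 0$ from \eqref{eq:quadsuN} applies with rows $i < N$ and columns $s < r$, yielding exactly $[u_{ir}, u_{Ns}] = 0$. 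Thus the first identity is literally one of the defining quadratic relations read off after passing to adjoints.

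For the second identity I would first compute $\Na_i(x_r)$ via the twisted derivation property. Writing $x_r = z_r z_r^*$ and using $\Na_i(z_r) = 0$ together with $d_{K_\ell}(z_r) = q^{-1/2} z_r$ (from \eqref{eq:actgener}), one gets $\Na_i(x_r) = q^{-1/2} z_r \Na_i(z_r^*)$, so that up to the nonzero scalar $q^{-1/2}(-q)^{i-N}$ the operator $\Na_i(x_r)$ equals $z_r u_{ir}^*$. It then suffices to establish $z_s z_s^* \, z_r u_{ir}^* = q^2 \, z_r u_{ir}^* \, z_s z_s^*$, which I would prove by transporting $x_s = z_s z_s^*$ from the left to the right through four successive commutations: $z_s^* z_r = q z_r z_s^*$ and $z_s z_r = q z_r z_s$ from the sphere relations \eqref{eq:defsphere} (each contributing a factor $q$, for $q^2$ in total), followed by $z_s^* u_{ir}^* = u_{ir}^* z_s^*$ (the first identity, valid since $s < r$) and $z_s u_{ir}^* = u_{ir}^* z_s$ (Lemma \ref{l:znab}, valid since $s \neq r$), which let $z_s$ and $z_s^*$ pass freely across $u_{ir}^*$.

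The calculations are routine; the only points requiring care are the bookkeeping of the scalar prefactors and the verification that all index constraints ($i < N$ and $s < r$, so in particular $s \neq r$) are met simultaneously, so that each invoked relation genuinely applies. The one conceptual step is recognizing, in the first identity, that passing to adjoints turns the statement into a defining relation of \eqref{eq:quadsuN}; once that is secured, the second identity reduces to the straightforward commutation count described above.
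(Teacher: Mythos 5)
Your proof is correct and takes essentially the same route as the paper's: the first identity is exactly the paper's appeal to \eqref{eq:nabvan} together with the defining relations of $\C O(SU_q(N))$ (you simply make explicit which relation, namely $[u_{ir},u_{Ns}]=0$ after taking adjoints), and for the second identity both arguments compute $\Na_i(x_r)=q^{-1/2}z_r\Na_i(z_r^*)$ via the twisted Leibniz rule and then move $x_s$ across $z_r$ (factor $q^2$ from \eqref{eq:defsphere}) and across $\Na_i(z_r^*)$ (first identity plus Lemma \ref{l:znab}). Your bookkeeping of scalars and index constraints is accurate throughout.
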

\begin{proof}
  The first identity follows from \eqref{eq:nabvan} and the defining relations for $\C O(SU_q(N))$. The second identity follows by applying \eqref{eq:defsphere} in combination with the first identity and Lemma \ref{l:znab}:
\[
  \begin{split}
  x_s \Na_i(x_r) & = q^{-1/2} x_s z_r \Na_i(z_r^*) = q^{-1/2} q^2 z_r x_s \Na_i(z_r^*) \\
  & = q^{-1/2} q^2 z_r \Na_i(z_r^*) x_s = q^2 \Na_i(x_r) x_s . \qedhere
  \end{split}
  \]
\end{proof}

\begin{lemma}\label{l:xpasty}
  For $i \in \inn{\ell}$ and $s,r \in \inn{N}$ with $s \leq r$ we have that
  \[
  x_s \Na_i( y_r) = q^2 \Na_i(y_r) x_s \, \, \mbox{ and } \, \, \,
y_s \Na_i(y_r) = q^2 \Na_i(y_r) y_s .  
  \]
\end{lemma}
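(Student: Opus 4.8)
The plan is to reduce both identities to commutation relations involving the single generators $z_s$ and $z_s^*$, and to isolate one ``diagonal'' computation as the genuine difficulty. First note that the second identity follows formally from the first: writing $y_s = \sum_{p = 1}^s x_p$ and applying the first identity to each $x_p$ (legitimate since $p \leq s \leq r$) gives $y_s \Na_i(y_r) = \sum_{p=1}^s x_p \Na_i(y_r) = q^2 \Na_i(y_r) \sum_{p=1}^s x_p = q^2 \Na_i(y_r) y_s$. So only the first identity requires work.

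For the first identity I would exploit the factorization $x_s = z_s z_s^*$ and establish the two ``half'' relations
\[
z_s \Na_i(y_r) = q \cd \Na_i(y_r) z_s \quad \T{and} \quad z_s^* \Na_i(y_r) = q \cd \Na_i(y_r) z_s^* \qquad (s \leq r) ,
\]
since then $x_s \Na_i(y_r) = z_s\big( z_s^* \Na_i(y_r)\big) = q z_s \Na_i(y_r) z_s^* = q^2 \Na_i(y_r) z_s z_s^* = q^2 \Na_i(y_r) x_s$. The common starting point is the fact -- a direct consequence of the sphere relations \eqref{eq:defsphere} -- that both $z_s$ and $z_s^*$ commute with $y_r$ whenever $s \leq r$ (the contributions of the commutator $[z_s^*,z_s] = (1-q^2) y_{s-1}$ and of the indices $t < s$ cancel). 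Given this, the relation for $z_s$ is immediate from the twisted Leibniz rule \eqref{eq:twileib}: since $\Na_i(z_s) = 0$ and $d_{K_\ell^{\pm 1}}$ acts trivially on $y_r \in \C O(\B CP_q^\ell)$ by \eqref{eq:hompro}, applying $\Na_i$ to $z_s y_r = y_r z_s$ yields $q^{-1/2} z_s \Na_i(y_r) = q^{1/2}\Na_i(y_r) z_s$.

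The relation for $z_s^*$ is the main obstacle, precisely because $\Na_i(z_s^*) = (-q)^{i-N} u_{is}^* \neq 0$ by \eqref{eq:nabvan}: applying $\Na_i$ to $z_s^* y_r = y_r z_s^*$ now leaves the residual terms $\Na_i(z_s^*) y_r$ and $y_r \Na_i(z_s^*)$. I would first reduce to the case $r = s$, since a short computation from Lemma \ref{l:adjnab} and \eqref{eq:defsphere} gives $z_s^* \Na_i(x_t) = q \Na_i(x_t) z_s^*$ for $t > s$, so the tail $\Na_i(y_r) - \Na_i(y_s) = \sum_{t = s+1}^r \Na_i(x_t)$ carries the correct factor. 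For the base case, applying $\Na_i$ to $z_s^* y_s = y_s z_s^*$ leaves exactly the commutator $[y_s, \Na_i(z_s^*)]$, which collapses to the single diagonal term $[x_s, \Na_i(z_s^*)]$ because each $x_t$ with $t < s$ commutes with $\Na_i(z_s^*)$ (by Lemmas \ref{l:znab} and \ref{l:adjnab}).

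Finally, I would compute this diagonal commutator from two same-index inputs:
\[
z_s^* \Na_i(z_s^*) = q \cd \Na_i(z_s^*) z_s^* \quad \T{and} \quad q^{1/2}\Na_i(z_s^*) z_s = q^{-1/2} z_s \Na_i(z_s^*) + (1 - q^2) \Na_i(y_{s-1}) ,
\]
the first obtained by taking adjoints in the column relations of \eqref{eq:quadsuN}, and the second by applying $\Na_i$ to $z_s^* z_s = x_s + (1-q^2) y_{s-1}$. These combine to give $[x_s, \Na_i(z_s^*)] = (q^2 - 1)\Na_i(z_s^*) x_s + q^{3/2}(q^2-1)\Na_i(y_{s-1}) z_s^*$, and substituting this into the identity produced by the twisted Leibniz rule closes the base case. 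I expect the bookkeeping in this last step -- tracking the half-integer powers of $q$ and the $\Na_i(y_{s-1})$ correction terms so that everything reassembles into $q^{3/2}\Na_i(y_s) z_s^*$ -- to be the most delicate part of the argument.
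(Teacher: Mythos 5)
Your proof is correct, but it takes a genuinely different and considerably longer route than the paper's. I checked the key steps: $z_s$ and $z_s^*$ do commute with $y_r$ for $s \leq r$; the two half-relations $z_s \Na_i(y_r) = q \Na_i(y_r) z_s$ and $z_s^* \Na_i(y_r) = q \Na_i(y_r) z_s^*$ are true and together give the first identity; your diagonal commutator formula $[x_s, \Na_i(z_s^*)] = (q^2-1)\Na_i(z_s^*)x_s + q^{3/2}(q^2-1)\Na_i(y_{s-1})z_s^*$ is right; and the final bookkeeping does close, using $\Na_i(x_s) = q^{-1/2} z_s \Na_i(z_s^*)$ and $z_s^*\Na_i(z_s^*) = q \Na_i(z_s^*) z_s^*$ to rewrite $\Na_i(y_s)z_s^*$ and match the two sides. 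The paper instead kills the problem in two lines with a complement trick: by the sphere relations $y_N = \sum_{j=1}^N x_j = 1$ and $\Na_i(1) = 0$, so $\Na_i(y_r) = \Na_i\big(1 - \sum_{j=r+1}^N x_j\big) = -\sum_{j=r+1}^N \Na_i(x_j)$, and in this sum every index $j$ satisfies $j > r \geq s$, so the second identity of Lemma \ref{l:adjnab}, namely $x_s\Na_i(x_j) = q^2 \Na_i(x_j)x_s$ for $s < j$, applies term by term. Writing $y_r$ ``from above'' rather than ``from below'' is precisely what makes the diagonal case $t = s$ --- the entire second half of your argument, with its column relations from \eqref{eq:quadsuN} and its $q$-power bookkeeping --- disappear. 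What your approach buys is a finer conclusion, namely the generator-level relations $z_s\Na_i(y_r) = q\Na_i(y_r)z_s$ and $z_s^*\Na_i(y_r) = q\Na_i(y_r)z_s^*$, which refine the commutation from $x_s$ down to $z_s$ and $z_s^*$; but nothing in the paper needs this refinement, so for the purpose at hand the complement trick is the more efficient argument.
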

\begin{proof}
  The second identity follows immediately from the first one so we focus on the first identity. The case where $r = N$ is trivially satisfied since $y_N = 1$ and $\Na_i(1) = 0$. The remaining case where $r \leq \ell$ follows from Lemma \ref{l:adjnab} since
  \[
\Na_i(y_r) = \Na_i(1 - \sum_{j = r + 1}^N x_j) = - \sum_{j = r + 1}^N \Na_i( x_j) . \qedhere
  \]
\end{proof}

The proof of the next lemma depends on the explicit formula for the projection $p_m$ from \eqref{eq:mproj} together with the second identity in Lemma \ref{l:xpasty} (for $s = r = \ell$). The proof is left out since it is almost identical to the proof of \cite[Lemma 5.4]{AgKa:PSM}. Notice that $p_{-1} := 0$ by convention.

\begin{lemma}\label{l:paproj}
Let $m \in \nn_0$ and $i \in \inn{\ell}$. We have the identity
\[
p_m \cd \Na_i(y_\ell) = \Na_i(y_\ell) \cd p_{m -1} .
\]
\end{lemma}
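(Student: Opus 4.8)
The plan is to reduce the entire statement to the single commutation relation provided by Lemma~\ref{l:xpasty}. Writing $w := \Na_i(y_\ell)$ for brevity, the case $s = r = \ell$ of that lemma reads $y_\ell w = q^2 w y_\ell$, and an immediate induction on $n$ upgrades this to
\[
y_\ell^n w = q^{2n} w y_\ell^n \quad \text{for all } n \in \nn_0 .
\]
This $q$-commutation relation is the only structural input; everything else is bookkeeping with the explicit description of the projections $p_m$ recorded in \eqref{eq:mproj}.

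I would then argue by induction on $m$, with the convention $p_{-1} = 0$ taking care of the bottom of the induction. For the base case $m = 0$, recall that $p_0 = \lim_{n \to \infty} y_\ell^n$. Since right multiplication by the fixed bounded operator $w$ is norm-continuous, the displayed relation gives $p_0 w = \lim_{n \to \infty} q^{2n} w y_\ell^n = w \cdot \lim_{n \to \infty} q^{2n} y_\ell^n$. As $q \in (0,1)$ and $\| y_\ell^n \| = \| y_\ell \|^n = 1$, the scalar factor $q^{2n}$ forces this limit to vanish, so $p_0 w = 0 = w p_{-1}$.

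For the inductive step I would multiply the recursive formula \eqref{eq:mproj} for $p_m$ on the right by $w$ and pass $w$ through the norm-limit. The commutation relation converts the leading term $q^{-2mn} y_\ell^n w$ into $q^{-2(m-1)n} w y_\ell^n$, while the inductive hypothesis $p_{m-r} w = w p_{m-r-1}$ rewrites each summand $q^{-2rn} p_{m-r} w$ as $q^{-2rn} w p_{m-r-1}$. Factoring $w$ out on the left and observing that the $r = m$ term carries $p_{-1} = 0$ and hence drops out, one recognizes the surviving bracketed expression as exactly the defining limit of $p_{m-1}$ in \eqref{eq:mproj} (with $m$ replaced by $m-1$). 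This yields $p_m w = w p_{m-1}$ and closes the induction.

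I expect no genuine obstacle: the argument is purely formal once the relation $y_\ell w = q^2 w y_\ell$ is available. The only points requiring mild care are the interchange of the norm-limit with left and right multiplication by the fixed operator $w$, and the reindexing of the finite sum so that it matches the recursion for $p_{m-1}$ rather than that for $p_m$. As a conceptual sanity check one may note that $y_\ell w = q^2 w y_\ell$ is equivalent to $f(y_\ell)\, w = w\, f(q^2 y_\ell)$ for every continuous $f$ on $I_q$ (using $q^2 I_q \subseteq I_q$); applying this to the indicator function of the isolated point $q^{2m}$ turns it into the indicator of $q^{2(m-1)}$, which is precisely the identity $p_m w = w p_{m-1}$.
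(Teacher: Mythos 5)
Your proof is correct and follows essentially the same route the paper intends: the paper omits the argument, deferring to \cite[Lemma~5.4]{AgKa:PSM}, and that proof is precisely this induction based on the recursion \eqref{eq:mproj} and the identity $y_\ell \Na_i(y_\ell) = q^2 \Na_i(y_\ell) y_\ell$ from Lemma~\ref{l:xpasty}. Your closing functional-calculus observation, namely that $f(y_\ell)\,\Na_i(y_\ell) = \Na_i(y_\ell)\, f(q^2 y_\ell)$ for all $f \in C(I_q)$ applied to the indicator of the isolated point $q^{2m}$, is in fact a complete and somewhat cleaner standalone proof rather than merely a sanity check.
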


We are now prepared to state the main result in this section. In the proof we focus on the minor details which are not already covered by the argument given in the proof of \cite[Lemma 5.3]{AgKa:PSM}. Notice that, in the formulae here below, we are suppressing the representation of $C(\B CP_q^\ell)$ on the Hilbert space $L^2(\Om_M^0,h)$, see the discussion after Definition \ref{d:antihol}.

\begin{prop}\label{p:paproj}
Let $m \in \nn_0$. It holds that $p_m$ belongs to the domain of the closure $\ov{\Na}$ and that
\[
\ov{\Na}(p_m) =  \frac{1}{q^{2m}(1 - q^2)}\cd \Na(y_\ell) \big( p_m - q^2 p_{m-1} \big) 
\]
\end{prop}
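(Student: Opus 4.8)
The plan is to reduce everything to a single power-series computation for $y_\ell$ and then pass to the limit via the explicit description of $p_m$ in \eqref{eq:mproj}, using closedness of $\ov{\Na}$.

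First I would record the behaviour of $\Na$ on powers of $y_\ell$. Since $d_{K_\ell}$ acts as the identity on $\C O(\B CP_q^\ell)$ by \eqref{eq:hompro}, the twisted derivation $\Na_i$ restricts to an \emph{honest} derivation of $\C O(\B CP_q^\ell)$, so that $\Na_i(y_\ell^n) = \sum_{j=0}^{n-1} y_\ell^j \Na_i(y_\ell) y_\ell^{n-1-j}$. Feeding in the commutation relation $y_\ell \Na_i(y_\ell) = q^2 \Na_i(y_\ell) y_\ell$ from Lemma \ref{l:xpasty} (with $s = r = \ell$) to move every copy of $y_\ell$ to the right, the geometric sum collapses and, via \eqref{eq:nabcoord}, yields the clean formula
\[
\Na(y_\ell^n) = \frac{1 - q^{2n}}{1 - q^2}\, \Na(y_\ell)\, y_\ell^{n-1}, \qquad n \in \nn .
\]
I would read the scalar factor as the value on $\Sp(y_\ell)$ of the $q$-difference quotient of $t \mapsto t^n$; this is the structural reason the final answer is a difference quotient of an indicator function.

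The second ingredient is Lemma \ref{l:paproj}, which in the operator picture of \eqref{eq:nabcoord} reads $\Na(y_\ell) p_k = p_{k+1} \Na(y_\ell)$ for all $k \geq 0$ (with the convention $p_{-1} = 0$), letting me shuttle the spectral projections of $y_\ell$ past the fixed bounded operator $\Na(y_\ell)$. I would then argue by strong induction on $m$. For the base case $m = 0$ I use $p_0 = \lim_n y_\ell^n$ together with the power formula: since $y_\ell^{n-1} \to p_0$ in norm and $\tfrac{1-q^{2n}}{1-q^2} \to \tfrac{1}{1-q^2}$, we get $\Na(y_\ell^n) \to \tfrac{1}{1-q^2}\Na(y_\ell) p_0$ in operator norm, and closedness of $\ov{\Na}$ gives $p_0 \in \Dom(\ov{\Na})$ with the asserted value (recall $p_{-1} = 0$). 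For the inductive step I feed in the truncations $S_n := q^{-2mn} y_\ell^n - \sum_{i=1}^m q^{-2in} p_{m-i}$ from \eqref{eq:mproj}, which converge to $p_m$ in norm and lie in $\Dom(\ov{\Na})$ because the lower projections do by the induction hypothesis. Applying $\ov{\Na}$ term by term, substituting the power formula and the inductive formulae for $\ov{\Na}(p_{m-i})$, and using $\Na(y_\ell) p_k = p_{k+1}\Na(y_\ell)$ to collect everything as $\Na(y_\ell) R_n$, I reduce the whole statement to showing that the function $R_n \in C(I_q)$ converges in norm to $\tfrac{1}{q^{2m}(1-q^2)}(p_m - q^2 p_{m-1})$.

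The main obstacle is precisely this last convergence: written out in the spectral decomposition $y_\ell = \sum_{k} q^{2k} p_k$, the individual summands constituting $R_n$ diverge as $n \to \infty$ (the contributions of $p_k$ with $k < m$ carry factors $q^{2n(k-m)} \to \infty$), so one must exhibit an \emph{exact} cancellation rather than estimate termwise. I would carry this out by expanding $R_n = \sum_j c_{j,n} p_j$ over the mutually orthogonal projections $p_j$ and computing each coefficient $c_{j,n}$ explicitly; the telescoping built into \eqref{eq:mproj} forces $c_{j,n} = 0$ for $j \leq m-2$, leaves $c_{m-1,n} \to -q^{2-2m}/(1-q^2)$ and $c_{m,n} \to q^{-2m}/(1-q^2)$, and makes the tail $\sum_{j > m} c_{j,n} p_j$ vanish in norm (its coefficients decrease geometrically in $j$ and tend to $0$). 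Orthogonality of the $p_j$ converts these coefficientwise statements into norm convergence of $R_n$, and a final appeal to closedness of $\ov{\Na}$ delivers $p_m \in \Dom(\ov{\Na})$ with the stated formula. This is the step where the bookkeeping genuinely departs from \cite[Lemma 5.3]{AgKa:PSM}, since here the relevant element is the sum $y_\ell$ rather than a single generator.
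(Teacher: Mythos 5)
Your proposal is correct and follows essentially the same route as the paper: the twisted-Leibniz computation giving $\Na(y_\ell^n) = \frac{1-q^{2n}}{1-q^2}\Na(y_\ell)\,y_\ell^{n-1}$, the norm-limit argument for $p_0$ via closedness of $\ov{\Na}$, and then induction on $m$ through the truncations of \eqref{eq:mproj}. The only difference is that you spell out the coefficientwise cancellation in the inductive step, which the paper instead delegates to the proof of \cite[Lemma 5.3]{AgKa:PSM}; your bookkeeping (exact vanishing for $j \leq m-2$, the constant coefficient at $m-1$, and the geometrically decaying tail) is accurate.
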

\begin{proof}
  Using the commutator relation from Lemma \ref{l:xpasty} it can be verified that
  \[
\Na_i(y_\ell^n) = \sum_{k = 0}^{n-1} q^{2k} \Na_i(y_\ell) y_\ell^{n-1}  \quad \T{for all } n \in \nn .
\]
Using the formula for $\Na$ on elements from the coordinate algebra given in \eqref{eq:nabcoord} we then see that
\begin{equation}\label{eq:nabpow}
\Na(y_\ell^n) = \frac{1 - q^{2n}}{1 - q^2} \Na(y_\ell) y_\ell^{n-1}
\end{equation}
and hence that $\Na(y_\ell^n)$ converges in operator norm to $\frac{1}{1 - q^2} \Na(y_\ell) p_0$. This proves the relevant identity for $m = 0$. The general case where $m \in \nn$ now follows by an induction argument relying on the formula from \eqref{eq:mproj} and the identity in \eqref{eq:nabpow}. For more details we refer to the proof of \cite[Lemma 5.3]{AgKa:PSM}. 
\end{proof}

\subsection{Difference quotients}
In this subsection we are interested in computing the gradient $\Na(f)$ of an arbitrary $q$-Lipschitz function $f \in \Lip_{D_q}(I_q)$. Letting $I_q^\ci := \{ q^{2m} \mid m \in \nn_0\}$ denote the interior of the quantized interval, we start out by introducing the \emph{difference quotients} $D(f)$ and $E(f)$ in $C(I_q^\ci)$ associated to a continuous function $f \in C(I_q)$. These are defined by putting
\[
D(f)(x) := \frac{f(x) - f(x q^2)}{x(1 - q^2)} \, \, \T{ and } \, \, \,
E(f)(x) := D(f)(x q^{-2}) 
\]
for all $x \in I_q^\ci$ (using the convention that $f(q^{-2}) := 0$).


For each $m \in \nn_0$ we then record the formulae 
\begin{equation}\label{eq:diffproje}
\begin{split}
D(p_m) & = \frac{1}{q^{2m}(1 - q^2)} ( p_m  - q^2 p_{m-1}) \quad \T{ and } \\
E(p_m) & = \frac{1}{q^{2m}(1-q^2)} (p_{m + 1}  - q^2 p_m) ,
\end{split}
\end{equation}
recalling again that $p_{-1} := 0$. 

We may apply the difference quotients to obtain a convenient expression for the closed derivation $\ov{\Na}$ applied to functions in the linear span of the projections $p_m$ for $m \in \nn_0$.  In the formulae here below we are suppressing the representations $\rho^0$ and $\rho^1$ of $C(\B CP_q^\ell)$ on $L^2(\Om_M^0,h)$ and $L^2(\Om_M^1,h)$ (these representations are defined near \eqref{eq:repqua}).

\begin{prop}\label{p:delfin}
Let $f \in \T{span}\{ p_m \mid m \in \nn_0 \}$. We have the identities
\[
  \ov{\Na}(f) = \Na(y_\ell) D(f) = E(f) \Na(y_\ell) .
  \]
\end{prop}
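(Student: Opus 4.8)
The plan is to prove both identities by linearity, reducing everything to the spanning projections $p_m$ and then invoking the explicit formulas already established. The first identity is essentially a restatement of Proposition \ref{p:paproj}. Indeed, comparing the formula
\[
\ov{\Na}(p_m) = \frac{1}{q^{2m}(1 - q^2)}\, \Na(y_\ell)\big( p_m - q^2 p_{m-1} \big)
\]
from Proposition \ref{p:paproj} with the expression for $D(p_m)$ recorded in \eqref{eq:diffproje} yields $\ov{\Na}(p_m) = \Na(y_\ell)\, D(p_m)$ at once, where $D(p_m)$ is understood through the representation $\rho^0$ on $L^2(\Om_M^0,h)$. Since $\ov{\Na}$ is linear, each $p_m$ lies in its domain, and $D$ maps $\T{span}\{ p_m \}$ into itself, the identity $\ov{\Na}(f) = \Na(y_\ell)\, D(f)$ then extends to every $f$ in the span by taking finite linear combinations.

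The second identity is where Lemma \ref{l:paproj} enters. The key step I would isolate is the intertwining relation
\[
\Na(y_\ell)\, \rho^0(p_j) = \rho^1(p_{j + 1})\, \Na(y_\ell) \qquad \T{for all } j \in \nn_0 ,
\]
viewed as an identity of bounded operators $L^2(\Om_M^0,h) \to L^2(\Om_M^1,h)$. To establish it I would shift the index in Lemma \ref{l:paproj} (replacing $m$ by $j+1$) to get $\Na_i(y_\ell)\, p_j = p_{j+1}\, \Na_i(y_\ell)$ in $C(SU_q(N))$ for each $i \in \inn{\ell}$, and then apply the coordinate description \eqref{eq:nabcoord}: for every $\xi \in L^2(\Om_M^0,h)$ one computes
\[
\Na(y_\ell)( p_j \xi) = \sum_{i = 1}^\ell \Na_i(y_\ell)\, p_j \xi \ot e_i = \sum_{i = 1}^\ell p_{j+1}\, \Na_i(y_\ell) \xi \ot e_i = \rho^1(p_{j+1})\, \Na(y_\ell)(\xi) ,
\]
which is exactly the asserted relation.

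With the intertwining relation in hand, the second identity reduces again to the basis elements. For $f = p_m$ I would use it together with the formulas for $D(p_m)$ and $E(p_m)$ from \eqref{eq:diffproje} to compute
\[
\Na(y_\ell)\, D(p_m) = \frac{\Na(y_\ell)\big( p_m - q^2 p_{m-1}\big)}{q^{2m}(1 - q^2)} = \frac{\big( p_{m+1} - q^2 p_m \big)\Na(y_\ell)}{q^{2m}(1 - q^2)} = E(p_m)\, \Na(y_\ell) ,
\]
and then extend to the full span by linearity. The main obstacle I anticipate is not a hard calculation but the careful bookkeeping of the two distinct representations $\rho^0$ and $\rho^1$ on $L^2(\Om_M^0,h)$ and $L^2(\Om_M^1,h)$: one must apply the index shift on the correct side of $\Na(y_\ell)$, so that the difference quotient $D(f)$ acting through $\rho^0$ is converted precisely into the shifted difference quotient $E(f)$ acting through $\rho^1$.
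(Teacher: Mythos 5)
Your proposal is correct and follows essentially the same route as the paper's proof: reduce to $f = p_m$ by linearity, read off the first identity by comparing Proposition \ref{p:paproj} with \eqref{eq:diffproje}, and deduce the second from the index-shifted form of Lemma \ref{l:paproj}. The intertwining relation $\Na(y_\ell)\,\rho^0(p_j) = \rho^1(p_{j+1})\,\Na(y_\ell)$ that you isolate via \eqref{eq:nabcoord} is precisely how the paper uses Lemma \ref{l:paproj}, just spelled out with the representations made explicit.
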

\begin{proof}
  By linearity of the involved expressions we may fix $m \in \nn_0$ and focus on the case $f = p_m$. In this case, we obtain the relevant formulae from Proposition \ref{p:paproj} and Lemma \ref{l:paproj} together with \eqref{eq:diffproje}. 
\end{proof}

We shall now see how we can extract information about the gradient of an arbitrary $q$-Lipschitz function. 

\begin{prop}\label{p:delgen}
Let $f \in \Lip_{D_q}(I_q)$ and let $m \in \nn_0$. We have the identity
\[
\Na(f) \cd p_m  = \Na(y_\ell) D(f)(q^{2m}) \cd p_m .
\]
\end{prop}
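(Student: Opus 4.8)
The plan is to reduce an arbitrary $q$-Lipschitz function to two tractable pieces, exploiting that the right-hand side depends on $f$ only through the two values $f(q^{2m})$ and $f(q^{2(m+1)})$ via the difference quotient
\[
D(f)(q^{2m}) = \frac{f(q^{2m}) - f(q^{2(m+1)})}{q^{2m}(1 - q^2)} .
\]
In other words, the claim is essentially that $\Na(f) \cd p_m$ is \emph{local} at the points $q^{2m}$ and $q^{2(m+1)}$. Concretely, given $f \in \Lip_{D_q}(I_q)$ and $m \in \nn_0$, I would split $f = g + h$ with $g := f(q^{2m}) p_m + f(q^{2(m+1)}) p_{m+1}$ and $h := f - g$. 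Since $p_k$ is the indicator projection of the isolated point $q^{2k} \in I_q$, the function $g$ agrees with $f$ at $q^{2m}$ and $q^{2(m+1)}$, so that $h(q^{2m}) = h(q^{2(m+1)}) = 0$; moreover $g \in \T{span}\{ p_k \mid k \in \nn_0 \} \su \Lip_{D_q}(I_q)$, whence $h \in \Lip_{D_q}(I_q)$. It then suffices to treat $g$ and $h$ separately.

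For the span part I would invoke Proposition \ref{p:delfin}, which gives $\Na(g) = \Na(y_\ell) D(g)$. As $D(g) \in C(I_q)$ and $p_m$ is the indicator of $q^{2m}$, right multiplication by $p_m$ collapses $D(g)$ to the scalar $D(g)(q^{2m})$, so that $\Na(g) \cd p_m = \Na(y_\ell) D(g)(q^{2m}) \cd p_m$. Finally $D(g)(q^{2m}) = D(f)(q^{2m})$, since this difference quotient sees only the values of $g$ and $f$ at $q^{2m}$ and $q^{2(m+1)}$, where the two functions coincide by construction.

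The heart of the matter, and the step I expect to be the main obstacle, is the vanishing $\Na(h) \cd p_m = 0$. Here I would use that $h \cd p_m = h(q^{2m}) p_m = 0$, so that the Leibniz rule for the derivation $\Na$ gives
\[
0 = \Na(h \cd p_m) = \Na(h) \cd p_m + h \cd \Na(p_m) , \qquad \T{i.e.} \qquad \Na(h) \cd p_m = - h \cd \Na(p_m) .
\]
Substituting $\Na(p_m) = \tfrac{1}{q^{2m}(1 - q^2)} \Na(y_\ell)(p_m - q^2 p_{m-1})$ from Proposition \ref{p:paproj}, and then pushing the projections across $\Na(y_\ell)$ by means of Lemma \ref{l:paproj} in the equivalent form $\Na(y_\ell) p_k = p_{k+1} \Na(y_\ell)$, I would rewrite $\Na(y_\ell)(p_m - q^2 p_{m-1}) = (p_{m+1} - q^2 p_m) \Na(y_\ell)$. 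Since $h$ commutes with these projections (all acting via the representation on $L^2(\Om_M^1,h)$), this yields
\[
h \cd \Na(p_m) = \frac{1}{q^{2m}(1 - q^2)} \big( h(q^{2(m+1)}) p_{m+1} - q^2 h(q^{2m}) p_m \big) \Na(y_\ell) = 0
\]
by the choice of $h$.

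Combining the two parts gives $\Na(f) \cd p_m = \Na(g) \cd p_m + \Na(h) \cd p_m = \Na(y_\ell) D(f)(q^{2m}) \cd p_m$, as claimed. The only points demanding genuine care are the bookkeeping of the two representations $\rho^0$ and $\rho^1$ suppressed in the statement (so that each multiplication lands on the correct Hilbert space, with the left factors acting on $L^2(\Om_M^1,h)$ and the right factors on $L^2(\Om_M^0,h)$) and the verification that $p_k, h \in \Lip_{D_q}(I_q)$, which is what legitimizes the application of the Leibniz rule together with Proposition \ref{p:paproj}.
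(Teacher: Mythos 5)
Your proposal is correct and is essentially the paper's own argument: the paper also introduces $g = f \cd (p_m + p_{m+1})$ and performs the same Leibniz-rule manipulation (its chain of identities is exactly your splitting $f = g + h$ unwound, with your vanishing statement $h \cd \ov{\Na}(p_m) = 0$ appearing there in the equivalent form $f \cd E(p_m) = g \cd E(p_m)$), followed by the same application of Proposition \ref{p:delfin}. One remark: the memberships $p_k, h \in \Lip_{D_q}(I_q)$ that you flag as needing verification are never actually needed (nor are they established in the paper) --- your Leibniz applications only require $h$, $p_m$ and $g$ to lie in the domain of the closed derivation $\ov{\Na}$, which is immediate from $f \in \Lip_{D_q}(I_q)$ together with Proposition \ref{p:paproj}.
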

\begin{proof}
  Define the function $g := f \cd ( p_m + p_{m + 1} )$ and notice that $g \in \T{span}\{ p_k \mid k \in \nn_0 \}$ and furthermore that 
\[
f(q^{2m}) = g(q^{2m}) \, \, , \, \, \, D(f)(q^{2m}) = D(g)(q^{2m}) \, \, \T{ and } \, \, \, f \cd E(p_m) = g \cd E(p_m) .  
\]
Using that $\ov{\Na}$ is a derivation in combination with Proposition \ref{p:delfin} we then get that
\[
\begin{split}
  \Na(f) \cd p_m & = \Na( f \cd p_m) - f \cd \ov{\Na}(p_m) = \Na(g \cd p_m) - f \cd E(p_m) \Na(y_\ell) \\
  & = \Na(g \cd p_m) - g \cd \ov{\Na}(p_m) = \Na(g) \cd p_m = \Na(y_\ell) D(g)(q^{2m}) \cd p_m \\
  & = \Na(y_\ell) D(f)(q^{2m}) \cd p_m . \qedhere
\end{split}
  \]
\end{proof}

\subsection{Estimates on Lipschitz functions}
In order to show that the pair $\big( \Lip_{D_q}(I_q), L_{D_q}^{\max} \big)$ is a compact quantum metric space, our strategy is to apply the slip-norm $L_{D_q}^{\max}$ to obtain estimates on an arbitrary $q$-Lipschitz function $f$. In fact, we need to control how fast the corresponding sequence $\big\{ f(q^{2m}) \}_{m = 0}^\infty$ converges to $f(0)$. Since the quantity $L_{D_q}^{\max}(f)$ dominates the operator norm of the gradient $\Na(f)$ applied to an arbitrary $q$-Lipschitz function it is relevant to relate the bounded operator $\Na(f)^* \Na(f)$ to the values of the function $f$. The next proposition in combination with Proposition \ref{p:delgen} provide us with a formula for $\Na(f)^* \Na(f)$ in terms of the difference quotient $D(f)$ and a single explicit positive continuous function on the quantized interval $I_q$.

\begin{prop}\label{p:Gmat}
It holds that
\[
\Na(y_\ell)^* \Na(y_\ell) =  q^{-1} y_\ell (1 - q^2 y_\ell) . 
\]
\end{prop}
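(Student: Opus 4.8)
The plan is to turn the operator identity into an algebraic one in $\C O(SU_q(N))$ and then to isolate a single commutation relation, which is the only non-formal input. First I would unpack $\Na(y_\ell)$ via \eqref{eq:nabcoord}: for $\xi \in \Om_M^0$ one has $\Na(y_\ell)(\xi)=\sum_{i=1}^\ell \Na_i(y_\ell)\,\xi\otimes e_i$, where each $\Na_i(y_\ell)$ acts by left multiplication. Since the $\{e_i\}$ are orthonormal and the adjoint of left multiplication by $a$ on $L^2(SU_q(N))$ is left multiplication by $a^*$ (as $\inn{\io(a\xi),\io(\eta)}=h((a\xi)^*\eta)=\inn{\io(\xi),\io(a^*\eta)}$), computing $\inn{\Na(y_\ell)\xi,\Na(y_\ell)\xi'}$ shows that $\Na(y_\ell)^*\Na(y_\ell)=\sum_{i=1}^\ell \Na_i(y_\ell)^*\Na_i(y_\ell)$, acting by left multiplication (through $\rho^0$) on $L^2(\Om_M^0,h)$.

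Next I would compute $\Na_i(y_\ell)$ explicitly. Writing $y_\ell=1-z_Nz_N^*$ and using that $\Na_i$ is a twisted derivation together with $\Na_i(z_N)=0$, $\Na_i(z_N^*)=(-q)^{i-N}u_{iN}^*$ from \eqref{eq:nabvan}, and $d_{K_\ell}(z_N)=q^{-1/2}z_N$ from \eqref{eq:actgener}, I obtain
\[
\Na_i(y_\ell)=-q^{-1/2}(-q)^{i-N}\,z_N u_{iN}^* .
\]
Taking adjoints, multiplying, and using $(-q)^{2(i-N)}=q^{2(i-N)}$ then gives
\[
\Na(y_\ell)^*\Na(y_\ell)=q^{-1}\sum_{i=1}^\ell q^{2(i-N)}\,u_{iN}\,z_N^*z_N\,u_{iN}^* .
\]

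The crux is to show that $u_{iN}^*$ commutes with $z_N^*z_N$, equivalently with $y_\ell$, since $z_N^*z_N=1-q^2y_\ell$. Here I would use that $z_Nz_N^*=1-y_\ell$ and $z_N^*z_N=1-q^2y_\ell$ force $y_\ell z_N=q^2 z_N y_\ell$, and that Lemma~\ref{l:paproj}, after multiplying $p_m\Na_i(y_\ell)=\Na_i(y_\ell)p_{m-1}$ by $q^{2m}$ and summing over $m$ (recall $\sum_m q^{2m}p_m=y_\ell$ and $\sum_m q^{2m}p_{m-1}=q^2 y_\ell$), gives $y_\ell\,\Na_i(y_\ell)=q^2\,\Na_i(y_\ell)\,y_\ell$, that is $y_\ell z_N u_{iN}^*=q^2 z_N u_{iN}^* y_\ell$. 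Substituting $y_\ell z_N=q^2 z_N y_\ell$ on the left and cancelling $q^2 z_N$—legitimate because $z_N^*z_N=1-q^2y_\ell\geq (1-q^2)\cdot 1$ makes $z_N$ bounded below, hence left-invertible—yields $y_\ell u_{iN}^*=u_{iN}^* y_\ell$.

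Finally I would move $z_N^*z_N$ past each $u_{iN}^*$ and apply the unitarity relation \eqref{eq:unitrans} in the form $\sum_{i=1}^N q^{2(i-N)}u_{iN}u_{iN}^*=1$ (its $i=j=N$ case), subtracting the $i=N$ term $z_Nz_N^*=1-y_\ell$ to get $\sum_{i=1}^\ell q^{2(i-N)}u_{iN}u_{iN}^*=y_\ell$. This produces
\[
\Na(y_\ell)^*\Na(y_\ell)=q^{-1}\Big(\sum_{i=1}^\ell q^{2(i-N)}u_{iN}u_{iN}^*\Big)z_N^*z_N=q^{-1}y_\ell(1-q^2y_\ell),
\]
as claimed. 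I expect the commutation step in the third paragraph to be the main obstacle: a frontal assault through the relations \eqref{eq:quadsuN} would entangle the last-column generators $u_{iN}$ with the last-row generators assembling $y_\ell$ via the mixed $R$-matrix relations, whereas routing the argument through Lemma~\ref{l:paproj} and the bounded-belowness of $z_N$ sidesteps these relations entirely.
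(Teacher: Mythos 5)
Your proof is correct, and it is worth comparing with the paper's, because the two share a skeleton but part ways at one telling point. Both arguments reduce $\Na(y_\ell)^*\Na(y_\ell)$ to $\sum_{i=1}^\ell \Na_i(y_\ell)^*\Na_i(y_\ell)$ via \eqref{eq:nabcoord}, compute each $\Na_i(y_\ell)$ from the twisted Leibniz rule together with \eqref{eq:nabvan}, and close with the unitarity relation \eqref{eq:unitrans}. The divergence is in which factorization of $y_\ell$ gets differentiated. You take $y_\ell = 1 - z_N z_N^*$, which yields $\Na_i(y_\ell) = -q^{-1/2}(-q)^{i-N} z_N u_{iN}^*$ and hence $\Na_i(y_\ell)^*\Na_i(y_\ell) = q^{-1} q^{2(i-N)} u_{iN}\, z_N^* z_N\, u_{iN}^*$, with $z_N^* z_N$ trapped in the middle -- this is what forces your commutation step $[u_{iN}^*, y_\ell] = 0$. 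The paper instead differentiates $y_\ell = q^{-2}(1 - z_N^* z_N)$, obtaining $\Na_i(y_\ell) = -q^{-3/2}(-q)^{i-N} u_{iN}^* z_N$, so that $\Na_i(y_\ell)^*\Na_i(y_\ell) = q^{-3} q^{2(i-N)} z_N^*\, u_{iN} u_{iN}^*\, z_N$ already has the unitarity sum sandwiched between $z_N^*$ and $z_N$; then \eqref{eq:unitrans} and the sphere relation $z_N^*(1 - z_N z_N^*) z_N = q^2 y_\ell (1 - q^2 y_\ell)$ finish the computation with no commutation at all. Your extra step is handled correctly: $y_\ell \Na_i(y_\ell) = q^2 \Na_i(y_\ell) y_\ell$ together with $y_\ell z_N = q^2 z_N y_\ell$ and left-cancellation of $z_N$ (legitimate, since $z_N^* z_N = 1 - q^2 y_\ell \geq (1-q^2)\cdot 1$ is invertible, so $(z_N^* z_N)^{-1} z_N^*$ is a left inverse) does give $[u_{iN}^*, y_\ell] = 0$. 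Two smaller remarks. First, your detour through Lemma \ref{l:paproj} is unnecessary: the identity $y_\ell \Na_i(y_\ell) = q^2 \Na_i(y_\ell) y_\ell$ is exactly the second identity of Lemma \ref{l:xpasty} with $s = r = \ell$, which is the statement Lemma \ref{l:paproj} is itself deduced from, so citing it directly shortens and untangles the logic. Second, your closing prediction is slightly off target: the paper neither fights the mixed relations of \eqref{eq:quadsuN} nor needs your cancellation trick -- by choosing the other factorization of $y_\ell$ it arranges the operators so that the obstacle you anticipated never arises. What your route buys in exchange is a genuinely reusable fact (the centrality of $y_\ell$ relative to the elements $u_{iN}^*$, proved by a purely $C^*$-algebraic argument) that the paper's computation never makes explicit.
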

\begin{proof}
We first recall that the identities in \eqref{eq:defsphere} imply that $y_\ell = q^{-2}(1- z_N^* z_N )$ and $z_N^* (1 - z_N z_N^*) z_N = q^2 y_\ell (1 - q^2 y_\ell)$. For every $i \in \inn{\ell}$ we then obtain from the considerations in the beginning of Subsection \ref{ss:derind} that
\[
\Na_i(y_\ell) = -q^{-2} \Na_i(z_N^* z_N) = - q^{-2} \Na_i(z_N^*) d_{K_\ell^{-1}}(z_N) = - q^{-3/2} (-q)^{i - N} u_{iN}^* z_N .
\]
Combining these observations with the identities from \eqref{eq:unitrans} and \eqref{eq:nabcoord} we obtain the desired result:
\[
\begin{split}
\Na(y_\ell)^* \Na( y_\ell) & = q^{-3} \cd \sum_{i = 1}^\ell q^{2(i - N)} z_N^* u_{iN} u_{iN}^* z_N \\
& = q^{-3} z_N^* \cd \big( \sum_{i = 1}^N q^{2(i-N)} u_{iN} u_{iN}^* \big) \cd z_N
- q^{-3} z_N^* z_N z_N^* z_N \\
& = q^{-3} z_N^* (1 - z_N z_N^*) z_N  = q^{-1} y_\ell (1 - q^2 y_\ell) . \qedhere
\end{split}
\]
\end{proof}

We apply the notation $G := q^{-1} y_\ell (1 - q^2 y_\ell)$ for the positive continuous function on the quantized interval $I_q$ appearing in Proposition \ref{p:Gmat}.

\begin{prop}\label{p:lipestim}
  Let $f \in \Lip_{D_q}(I_q)$ and let $x,y \in I_q$. We have the estimate
  \[
\big| f(x) - f(y) \big| \leq \frac{\sqrt{1 + q}}{\sqrt{1 - q}} \cd \big| \sqrt{x} - \sqrt{y} \big| \cd L_{D_q}^{\max}(f) .
\]
\end{prop}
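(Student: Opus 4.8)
The plan is to reduce the estimate to a single-step bound on consecutive points $q^{2m}, q^{2m+2} \in I_q$ and then telescope. First I would combine Proposition \ref{p:delgen} with Proposition \ref{p:Gmat} to control the difference quotient $D(f)(q^{2m})$. Writing $G := q^{-1} y_\ell(1 - q^2 y_\ell)$ and recalling that $p_m$ is the spectral projection of $y_\ell$ at the isolated point $q^{2m}$, Proposition \ref{p:delgen} gives $\Na(f) p_m = D(f)(q^{2m}) \cd \Na(y_\ell) p_m$, so that
\[
p_m \Na(f)^* \Na(f) p_m = |D(f)(q^{2m})|^2 \cd p_m G p_m = |D(f)(q^{2m})|^2 \cd G(q^{2m}) \cd p_m ,
\]
where $G(q^{2m}) = q^{-1} q^{2m}(1 - q^{2m+2}) > 0$ and we used that $G$ and $p_m$ are commuting functions of $y_\ell$. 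Since $p_m$ is represented as a nonzero projection and $\| \Na(f) \| \leq L_{D_q}^{\max}(f)$, taking operator norms (and using $\|\Na(f)p_m\|^2 = \|p_m\Na(f)^*\Na(f)p_m\|$) yields $|D(f)(q^{2m})| \cd \sqrt{G(q^{2m})} \leq L_{D_q}^{\max}(f)$.

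Next I would translate this into a bound on the increment $f(q^{2m}) - f(q^{2m+2})$. By the definition of the difference quotient we have $f(q^{2m}) - f(q^{2m+2}) = q^{2m}(1 - q^2) D(f)(q^{2m})$, so the previous step gives
\[
|f(q^{2m}) - f(q^{2m+2})| \leq \frac{q^{2m}(1-q^2)}{\sqrt{G(q^{2m})}} \, L_{D_q}^{\max}(f) = \frac{q^m (1-q^2) q^{1/2}}{\sqrt{1 - q^{2m+2}}} \, L_{D_q}^{\max}(f) .
\]
The elementary inequality $(1-q^2)q \leq 1 - q^{2m+2}$, which follows from $(1-q^2)q \leq 1 - q^2 \leq 1 - q^{2m+2}$ for $m \geq 0$, then upgrades this to the clean single-step bound $|f(q^{2m}) - f(q^{2m+2})| \leq \sqrt{1 - q^2}\, q^m \, L_{D_q}^{\max}(f)$.

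Finally I would telescope. For $x = q^{2m}$ and $y = q^{2n}$ with $m < n$, summing the single-step estimates and using $\sum_{k=m}^{n-1} q^k = (q^m - q^n)/(1-q)$ gives
\[
|f(q^{2m}) - f(q^{2n})| \leq \sqrt{1-q^2} \sum_{k=m}^{n-1} q^k \, L_{D_q}^{\max}(f) = \frac{\sqrt{1+q}}{\sqrt{1-q}} (q^m - q^n)\, L_{D_q}^{\max}(f) ,
\]
which is exactly the claimed inequality since $|\sqrt{x} - \sqrt{y}| = q^m - q^n$. The boundary case $y = 0$ follows by letting $n \to \infty$ and invoking continuity of $f$ (so $f(q^{2n}) \to f(0)$), while $x = y$ is trivial. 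The main obstacle is the first step: one must justify that $p_m$ is represented as a \emph{nonzero} projection on $L^2(\Om_M^0,h)$ and correctly handle that $\Na(f)$ maps between the distinct Hilbert spaces $L^2(\Om_M^0,h)$ and $L^2(\Om_M^1,h)$, so that the scalar $D(f)(q^{2m})$ can be extracted cleanly from the operator identity above; once this is in place the arithmetic and the telescoping are routine.
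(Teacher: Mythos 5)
Your argument is correct and is essentially the paper's own proof: the same single-step bound on consecutive points $q^{2m}, q^{2(m+1)}$ obtained by combining Propositions \ref{p:delgen} and \ref{p:Gmat}, followed by telescoping and continuity at $0$; your elementary inequality $(1-q^2)q \leq 1 - q^{2m+2}$ is precisely the paper's bound $G(q^{2m}) \geq q^{2m}(1-q^2)$ in disguise. The subtlety you flag --- that $\rho^0(p_m) \neq 0$, so that the scalar $D(f)(q^{2m})$ and the factor $G(q^{2m})^{1/2}$ can be extracted from $\big\| \Na(y_\ell) D(f)(q^{2m}) p_m \big\|$ --- is used silently in the paper's proof as well, so you are, if anything, slightly more careful on this point.
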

\begin{proof}
  Let $m \in \nn_0$ and notice first of all that it suffices to establish the result for $x = q^{2m}$ and $y = q^{2(m+1)}$. Using Proposition \ref{p:delgen} and Proposition \ref{p:Gmat} together with the fact that $G(q^{2m}) \geq q^{2m} (1 - q^2)$ we arrive at the estimates:
    \[
    \begin{split}
      L_{D_q}^{\max}(f) & \geq \| \Na(f) \| \geq \| \Na(f) p_m \| = \big\| \Na(y_\ell) D(f)(q^{2m}) p_m \big\| \\
      & = G(q^{2m})^{1/2} \cd \big| D(f)(q^{2m}) \big|
      \geq \frac{ \big| f(q^{2m}) - f(q^{2(m+1)}) \big|}{q^m \sqrt{1 - q^2}}  \\
      & = \big| f(q^{2m}) - f(q^{2(m+1)}) \big| \cd \frac{\sqrt{1-q}}{(q^m - q^{m+1})\sqrt{1 + q}}. \qedhere
    \end{split}
  \]
\end{proof}

\subsection{Main theorems}

\begin{theorem}\label{thm:qms}
The pair $\big( \Lip_{D_q}(I_q),  L_{D_q}^{\max} \big)$ is a compact quantum metric space.
\end{theorem}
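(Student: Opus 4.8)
The plan is to reduce the statement to the classical fact that the Lipschitz algebra of a compact metric space is a compact quantum metric space, by comparing the slip-norm $L_{D_q}^{\max}$ with an honest Lipschitz seminorm on $I_q$. Since $x \mapsto \sqrt{x}$ is continuous and injective on $I_q$, the formula $d(x,y) := |\sqrt{x} - \sqrt{y}|$ defines a metric inducing the topology of $I_q$ and turning it into a compact metric space (homeomorphic to $\{q^m \mid m \in \nn_0\} \cup \{0\}$ with the Euclidean metric). Writing $C := \sqrt{1+q}/\sqrt{1-q}$, the decisive input is Proposition \ref{p:lipestim}, which says precisely that every $q$-Lipschitz function $f$ is $d$-Lipschitz with
\[
L_d(f) \leq C \cdot L_{D_q}^{\max}(f),
\]
so that $\Lip_{D_q}(I_q)$ sits inside the classical Lipschitz algebra $\Lip(I_q)$ for the metric $d$ and the seminorm $L_{D_q}^{\max}$ dominates $L_d$ up to the constant $C$.

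First I would record that, by the example discussed after Theorem \ref{t:approx}, the pair $(\Lip(I_q), L_d)$ is a compact quantum metric space for the metric $d$. In particular it has finite diameter, so Proposition \ref{p:diam} supplies a constant $C' > 0$ with $\big\|[f]\big\|_{C(I_q)/\cc 1} \leq C' L_d(f)$ for all $f \in \Lip(I_q)$. Combining this with the comparison above yields $\big\|[f]\big\|_{C(I_q)/\cc 1} \leq C C' L_{D_q}^{\max}(f)$ for all $f \in \Lip_{D_q}(I_q)$, so that $(\Lip_{D_q}(I_q), L_{D_q}^{\max})$ has finite diameter as well, again by Proposition \ref{p:diam}.

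Next I would produce the required finite dimensional approximations. Fix $\ep > 0$. Since $(\Lip(I_q), L_d)$ is a compact quantum metric space, Theorem \ref{t:approx} yields an $(\ep/C)$-approximation $(\iota, \Phi)$ of $(\Lip(I_q), L_d)$, that is, unital linear maps $\iota, \Phi : C(I_q) \to B$ into some unital $C^*$-algebra $B$ with $\iota$ isometric, $\Phi$ positive with finite dimensional image, and $\|\iota(f) - \Phi(f)\| \leq (\ep/C) L_d(f)$ for all $f \in \Lip(I_q)$. The maps $\iota$ and $\Phi$ are defined on all of $C(I_q)$, so the structural requirements on an approximation are unchanged when we view $(\iota,\Phi)$ against the smaller domain $\Lip_{D_q}(I_q)$; and the comparison $L_d \leq C L_{D_q}^{\max}$ gives
\[
\|\iota(f) - \Phi(f)\| \leq (\ep/C) L_d(f) \leq \ep \cdot L_{D_q}^{\max}(f)
\]
for all $f \in \Lip_{D_q}(I_q)$. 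Hence $(\iota, \Phi)$ is an $\ep$-approximation of $(\Lip_{D_q}(I_q), L_{D_q}^{\max})$. Having verified the finite diameter condition and the existence of arbitrarily precise $\ep$-approximations, Theorem \ref{t:approx} lets me conclude that $(\Lip_{D_q}(I_q), L_{D_q}^{\max})$ is a compact quantum metric space.

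Since Proposition \ref{p:lipestim} already carries the analytic weight, no genuine obstacle remains at this stage; the points to watch are purely bookkeeping. One must check that the comparison $L_d \leq C L_{D_q}^{\max}$ runs in the direction needed both for finite diameter and for transporting approximations, namely that the $L_{D_q}^{\max}$-unit ball is contained in $C$ times the $L_d$-unit ball. One should also confirm that $\Lip_{D_q}(I_q)$ is norm-dense in $C(I_q)$ — which holds by definition — so that the pair is eligible to be a compact quantum metric space in the first place.
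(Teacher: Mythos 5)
Your proposal is correct, but it organizes the argument differently from the paper. Both proofs rest on the same analytic input, Proposition \ref{p:lipestim}, and both conclude via Theorem \ref{t:approx}; the difference lies in how the two conditions of that theorem are verified. The paper works entirely inside $C(I_q)$: finite diameter is obtained directly by comparing $f$ with the constant $f(0)$, and the $\ep$-approximation is constructed by hand as the pair $(\T{id},\Psi)$, where $\Psi$ freezes a function below the level $q^{2N}$ and therefore has finite dimensional image since only finitely many points of $I_q$ lie above $q^{2N}$. You instead read Proposition \ref{p:lipestim} as the seminorm comparison $L_d \leq C_q \cd L_{D_q}^{\max}$ on $\Lip_{D_q}(I_q)$, where $d(x,y) = |\sqrt{x}-\sqrt{y}|$ metrizes $I_q$, and then import both finite diameter (via Proposition \ref{p:diam}) and the $\ep$-approximations from the classical fact that $\big(\Lip(I_q),L_d\big)$ is a compact quantum metric space --- a fact the paper records after Theorem \ref{t:approx} with references. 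Your transport of approximations is legitimate precisely because both pairs share the same ambient $C^*$-algebra $C(I_q)$, so an $(\ep/C_q)$-approximation of $\big(\Lip(I_q),L_d\big)$ is verbatim an $\ep$-approximation of $\big(\Lip_{D_q}(I_q),L_{D_q}^{\max}\big)$ once the estimate is checked on the smaller subalgebra, and the inequality runs in the needed direction (the $L_{D_q}^{\max}$-unit ball sits inside $C_q$ times the $L_d$-unit ball). What your route buys is modularity and generality: it isolates a reusable principle --- any slip-norm dominating a multiple of a classical Lipschitz seminorm on a dense unital $*$-subalgebra inherits the compact quantum metric property --- and it could even bypass Theorem \ref{t:approx} altogether, since $\mk_{L_{D_q}^{\max}} \leq C_q \cd \mk_{L_d}$ together with compactness of the state space makes the identity map a homeomorphism. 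What the paper's route buys is self-containedness and explicit constants: its approximating maps are concrete positive unital endomorphisms of $C(I_q)$, essentially reproving the classical Lipschitz-algebra result in the special case at hand rather than citing it. The only point where your write-up is slightly glib is the claim that norm-density of $\Lip_{D_q}(I_q)$ in $C(I_q)$ "holds by definition''; it holds because the polynomials in $y_\ell$ lie in $\C O(\B CP_q^\ell) \cap C(I_q) \su \Lip_{D_q}(I_q)$ and are dense by Stone--Weierstrass, which is presumably what the paper has in mind when it asserts density.
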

\begin{proof}
To ease the notation, put $C_q := \frac{\sqrt{1 + q}}{\sqrt{1 - q}}$. The finite diameter condition is a consequence of Proposition \ref{p:lipestim}. Indeed, for every $f \in \Lip_{D_q}(I_q)$ we get that
\[
\big\| [f] \big\|_{C(I_q)/\B C} \leq \| f - f(0) \| \leq C_q \cd L_{D_q}^{\max}(f) .
\]
  
Let $\ep > 0$ be given and choose $N \in \nn_0$ such that $C_q \cd q^N \leq \ep$. Let $\T{id} : C(I_q) \to C(I_q)$ denote the identity operator and define the positive unital map $\Psi : C(I_q) \to C(I_q)$ by putting
\[
\Psi(f)(x) := \fork{ccc}{f(x) & \T{for} & x \geq q^{2N} \\ f(q^{2N}) & \T{for} & x \leq q^{2N}} .
\]
Since the quantized interval $I_q$ only contains finitely many points which are larger than or equal to $q^{2N}$ we get that $\Psi$ has finite dimensional image. For each $f \in \Lip_{D_q}(I_q)$ we moreover obtain from Proposition \ref{p:lipestim} that
\[
\begin{split}
  \| f - \Psi(f) \| & = \sup\big\{ \big| f(x) - f(q^{2N}) \big| \, \mid \, \, x \leq q^{2N} \big\} \\
 & \leq \sup\big\{ C_q ( q^N - \sqrt{x}  ) \cd L_{D_q}^{\max}(f) \, \mid \, \, x \leq q^{2N} \big\} \\
 & \leq \ep \cd L_{D_q}^{\max}(f) .
\end{split}
  \]
  This shows that $(\T{id}, \Psi)$ is an $\ep$-approximation of the pair $\big( \Lip_{D_q}(I_q),  L_{D_q}^{\max}\big)$.

  It therefore follows from Theorem \ref{t:approx} that $\big(\Lip_{D_q}(I_q),L_{D_q}^{\max}\big)$ is a compact quantum metric space.  
\end{proof}

\begin{theorem}\label{t:mainthm}
The unital spectral triple $\big( \Lip_{D_q}(\B CP_q^\ell), L^2(\Om_M,h), D_q \big)$ is a spectral metric space.
\end{theorem}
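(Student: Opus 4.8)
The plan is to obtain the theorem as an immediate corollary of the two main technical results already in hand, namely the extension theorem (Theorem \ref{t:extlem}) and the compact quantum metric structure of the quantized interval (Theorem \ref{thm:qms}). Recall that the integer $M \in \zz$ has been fixed throughout, so that the spectral triple $\big( \Lip_{D_q}(\B CP_q^\ell), L^2(\Om_M,h), D_q \big)$, the pair $\big( \Lip_{D_q}(I_q), L_{D_q}^{\max} \big)$ coming from its restriction to the quantized interval, and the pair $\big( \Lip_{D_q}(\B CP_q^\ell), L_{D_q}^{\max} \big)$ are all defined with respect to the same twist; the argument below then applies verbatim for every value of $M$.

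First I would invoke Theorem \ref{thm:qms}, which states that $\big( \Lip_{D_q}(I_q), L_{D_q}^{\max} \big)$ is a compact quantum metric space. Feeding this into the extension theorem (Theorem \ref{t:extlem}), whose two conditions are logically equivalent, yields at once that $\big( \Lip_{D_q}(\B CP_q^\ell), L_{D_q}^{\max} \big)$ is a compact quantum metric space as well. Finally, by the definition of the slip-norm $L_{D_q}^{\max}$ attached to the Lipschitz version of the spectral triple and by Definition \ref{d:specmet}, the statement that $\big( \Lip_{D_q}(\B CP_q^\ell), L_{D_q}^{\max} \big)$ is a compact quantum metric space is precisely the assertion that the unital spectral triple $\big( \Lip_{D_q}(\B CP_q^\ell), L^2(\Om_M,h), D_q \big)$ is a spectral metric space. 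This closes the argument.

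For this final statement there is genuinely no remaining obstacle: all of the analytic difficulty has been discharged upstream. The substantive content of Theorem \ref{thm:qms} lies in the estimate of Proposition \ref{p:lipestim}, which controls the difference quotients of a $q$-Lipschitz function on $I_q$ through the single positive function $G = q^{-1} y_\ell(1 - q^2 y_\ell)$, and the substantive content of Theorem \ref{t:extlem} lies in the invariance of the Monge-Kantorovich metric under the conditional expectation $E$ (Lemma \ref{l:lipcon}) together with the finite-dimensional approximations $\beta_k$ (Lemma \ref{l:betafindim} and Proposition \ref{p:betaest}). The one point worth flagging is that the result is proved at the level of the \emph{Lipschitz algebra} $\Lip_{D_q}(\B CP_q^\ell)$ rather than merely the coordinate algebra $\C O(\B CP_q^\ell)$; since $\C O(\B CP_q^\ell) \su \Lip_{D_q}(\B CP_q^\ell)$ with the slip-norms agreeing on the smaller algebra, the conclusion for the coordinate-algebra spectral triple follows a fortiori from the discussion after Definition \ref{d:specmet}, and this is exactly what makes the statement non-strengthenable in the sense emphasised in the introduction.
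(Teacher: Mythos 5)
Your proposal is correct and coincides with the paper's own proof, which likewise obtains the theorem by combining Theorem \ref{t:extlem} with Theorem \ref{thm:qms} and unwinding Definition \ref{d:specmet}. The additional remarks you make about where the analytic work lies and about the coordinate-algebra version following a fortiori are accurate but not needed for the argument.
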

\begin{proof}
This follows immediately by combining Theorem \ref{t:extlem} and Theorem \ref{thm:qms}.
\end{proof}

As a corollary to our main theorem we recover the main result from \cite{AgKa:PSM} as a special case, see the discussion in Subsection \ref{ss:dabsit} and in particular Proposition \ref{p:dabsitdada}.

\begin{cor}
The Lipschitz algebra version of the D\k{a}browski-Sitarz spectral triple $\big( \Lip_{\dirac_q}(S_q^2), H_+ \op H_- , \dirac_q \big)$ is a spectral metric space.
\end{cor}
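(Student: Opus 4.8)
The plan is to deduce the corollary directly from the main theorem by specializing parameters and invoking the equivalence recorded in Proposition \ref{p:dabsitdada}. The Podle\'s sphere corresponds to the lowest-dimensional quantum projective space, so the first step is to apply Theorem \ref{t:mainthm} with $\ell = 1$ (hence $N = 2$) and twist parameter $M = 1$. This immediately yields that the unital spectral triple $\big( \Lip_{D_q}(\B CP_q^1), L^2(\Om_1,h), D_q \big)$ is a spectral metric space, i.e.\ that $\big( \Lip_{D_q}(\B CP_q^1), L_{D_q}^{\max}\big)$ is a compact quantum metric space.

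Next I would transport this property across the transpose isomorphism built in Subsection \ref{ss:dabsit}. By Proposition \ref{p:dabsitdada} the $*$-isomorphism $T : C(S_q^2) \to C(\B CP_q^1)$ restricts to a $*$-isomorphism $T : \Lip_{\dirac_q}(S_q^2) \to \Lip_{D_q}(\B CP_q^1)$ satisfying $L_{D_q}^{\max}(T(x)) = q^{-1/2} L_{\dirac_q}^{\max}(x)$. A $*$-isomorphism of the underlying unital $C^*$-algebras that rescales the slip-norm by a fixed positive constant induces a weak-$*$ homeomorphism of the state spaces under which the Monge-Kantorovich metric is scaled by that same constant; consequently the property of being a compact quantum metric space is preserved in both directions. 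Proposition \ref{p:dabsitdada} in fact already packages this observation into the stated equivalence, so it suffices to quote it directly.

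Combining these two steps, the compact quantum metric space structure on $\big( \Lip_{D_q}(\B CP_q^1), L_{D_q}^{\max}\big)$ transfers to $\big( \Lip_{\dirac_q}(S_q^2), L_{\dirac_q}^{\max}\big)$, which is precisely the assertion that the Lipschitz algebra version of the D\k{a}browski-Sitarz spectral triple $\big( \Lip_{\dirac_q}(S_q^2), H_+ \op H_-, \dirac_q \big)$ is a spectral metric space. There is no genuine obstacle at this stage: all the analytic work has already been carried out in the extension theorem (Theorem \ref{t:extlem}), in the quantum metric analysis of the quantized interval (Theorem \ref{thm:qms}), and in the identification of slip-norms (Proposition \ref{p:dabsitdada}). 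The only point requiring attention is the bookkeeping of parameters, namely confirming that the Podle\'s case is recovered exactly at $\ell = 1$ and $M = 1$, which is guaranteed by the explicit comparison of the operators $\C D_q$ and $\dirac_q$ given in Subsection \ref{ss:dabsit}.
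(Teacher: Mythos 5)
Your proposal is correct and follows essentially the same route as the paper: the corollary is obtained by specializing Theorem \ref{t:mainthm} to $\ell = 1$ and $M = 1$ and then invoking the equivalence of Proposition \ref{p:dabsitdada} to transfer the spectral metric space property across the transpose isomorphism. Your additional remark on why a slip-norm rescaling preserves the compact quantum metric space property is a correct (if redundant) justification of what Proposition \ref{p:dabsitdada} already packages.
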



\bibliographystyle{plain}

\end{document}